\crefname{equation}{}{}
\crefname{lemma}{Lemma}{Lemmas}
\crefname{theorem}{Theorem}{Theorems}
\crefname{discr}{Discretization}{Discretizations}
\DeclareMathOperator{\D}{D}
\apptocmd{\sloppy}{\hbadness 10000\relax}{}{}
\newcommand{\dual}[1]{\langle {#1} \rangle}
\newcommand{\Dual}[1]{\left\langle {#1} \right\rangle}
\newcommand{\nm}[1]{\lVert {#1} \rVert}
\newcommand{\snm}[1]{\lvert {#1} \rvert}
\newcommand{\Snm}[1]{\left\lvert {#1} \right\rvert}
\newcommand{\ssnm}[1]
{
  \left\vert\kern-0.25ex
  \left\vert\kern-0.25ex
  \left\vert
  {#1}
  \right\vert\kern-0.25ex
  \right\vert\kern-0.25ex
  \right\vert
}
\def\spher@harm#1{%
  \vbox{\hbox{%
    \offinterlineskip
    \valign{&\hb@xt@2\p@{\hss$##$\hss}\vskip.2ex\cr#1\crcr}%
  }\vskip-.36ex}%
}
\def\gshone{\spher@harm{.}}
\def\gshtwo{\spher@harm{.&.}}
\def\gshthree{\spher@harm{.&.&.}}
\let\gsh\spher@harm
\newtheorem{lemma}{Lemma}[section]
\newtheorem{remark}{Remark}[section]
\newtheorem{theorem}{Theorem}[section]
\def\@captype{table}\makeatother
\begin{document}
\title{
  \Large\bf{L1 scheme for solving an inverse problem subject to a fractional
  diffusion equation}
  \thanks{
    Binjie Li: School of Mathematics, Sichuan University, Chengdu 610064, China,
    (libinjie@scu.edu.cn); Xiaoping Xie: School of Mathematics, Sichuan
    University, Chengdu 610064, China, (xpxie@scu.edu.cn); Yubin Yan: Department
    of Mathematical and Physical Sciences, University of Chester, Thorn- ton
    Science Park, Pool Lane, Ince, CH2 4NU, UK, (y.yan@chester.ac.uk). Binjie Li
    was supported in part by the National Natural Science Foundation of China
    (NSFC) Grant No. 11901410; Xiaoping Xie was supported in part by the
    National Natural Science Foundation of China (NSFC) Grant No. 11771312; Dr.
    Yubin Yan is the corresponding author.
  }
}
\author{Binjie Li, Xiaoping Xie and Yubin Yan}

\date{}
\maketitle

\begin{abstract}
   This paper considers the temporal discretization of an inverse problem subject
  to a time fractional diffusion equation. Firstly, the convergence of the L1
  scheme is established with an arbitrary sectorial operator of spectral angle $
  < \pi/2 $, that is the resolvent set of this operator contains $ \{z\in\mathbb
  C\setminus\{0\}:\ \snm{\operatorname{Arg} z}< \theta\}$ for some $ \pi/2 <
  \theta < \pi $. The relationship between the time fractional order $\alpha \in
  (0, 1)$ and the constants in the error estimates is precisely characterized,
  revealing that the L1 scheme is robust as $ \alpha $ approaches $ 1 $. Then an
  inverse problem of a fractional diffusion equation is analyzed, and the
  convergence analysis of a temporal discretization of this inverse problem is
  given. Finally, numerical results are provided to confirm the theoretical
  results.
\end{abstract}

\medskip\noindent{\bf Keywords:} fractional diffusion equation, L1 scheme,
convergence, inverse problem.

\section{Introduction}

Let $ 0 < T < \infty $ and let $ \Omega \subset \mathbb R^d $ ($d=1,2,3$) be a
bounded domain with Lipschitz continuous boundary. Assume that $ \mathcal A $ is
the realization of a second-order partial differential operator with homogeneous
Dirichlet boundary condition in $ L^2(\Omega) $. We consider the following
fractional diffusion equation:
\begin{equation}
  \label{eq:model}
  \D_{0+}^\alpha y (t) - \mathcal A y (t) = f (t),  \quad 0 < t \leqslant T,
  \quad\text{ with } y(0) = 0,
\end{equation}
where $ 0 < \alpha < 1 $, $ \D_{0+}^\alpha $ is a Riemann-Liouville fractional
differential operator of order $ \alpha $, and $ f $ is a given function.

The L1 scheme is one of the most popular numerical methods for fractional
diffusion equations. Lin and Xu \cite{Lin2007} analyzed the L1 scheme for the
fractional diffusion equation and obtained the temporal accuracy $
O(\tau^{2-\alpha})$ with $0< \alpha <1$, where $\tau$ denotes the time step
size. Sun and Wu \cite{sun2006fully} proposed the L1 scheme and derived temporal
accuracy $ O(\tau^{3-\alpha})$ with $1<\alpha <2$ for the fractional wave
equation. The analysis in the above two papers both assume that the underlying
solution is sufficiently smooth. However, Jin et~al.~\cite{Jin2016-L1} proved
that the L1 scheme is of only first-order temporal accuracy for fractional
diffusion equations with non-vanishing initial value, and Jin et~al.~\cite[Lemma
4.2]{Jin2018} derived only first-order temporal accuracy for an inhomogeneous
fractional equation. This phenomenon is caused by the well-known fact that the
solution of a fractional diffusion equation generally has singularity in time no
matter how smooth the data are, and it indicates that numerical analysis without
regularity restrictions on the solution is important for the fractional
diffusion equation. Recently, Yan et al.~\cite{Yan2018} proposed a modified L1
scheme for a fractional diffusion equation, which has $ (2-\alpha) $-order
temporal accuracy. For the L1 scheme with nonuniform grids, we refer the reader
to \cite{Stynes2017,Liao2018}; we also note that analyzing the L1 scheme with
nonuniform grids for a fractional diffusion equation with nonsmooth initial
value remains to be an open problem.


Although the sectorial operator is considered, the theoretical results in
\cite{Jin2016-L1,Yan2018} can not be applied to a fractional diffusion equation
with an arbitrary sectorial operator, since they require the spectral angle of
the sectorial operator not to be greater than $ \pi/4 $ (cf.~\cite[Remark
3.8]{Jin2016-L1}), that is the resolvent set of this operator must contain $ \{z
\in \mathbb C \setminus \{0\}:\, \snm{\operatorname{Arg} z} < 3\pi/4\} $. In our
work, the analysis is suitable for an arbitrary sectorial operator with spectral
angle $ < \pi/2 $.

As the fractional diffusion equation is an extension of the normal diffusion
equation, the solution of a fractional diffusion equation will naturally
converge to the solution of a normal diffusion equation as $ \alpha \to {1-} $,
and hence the L1 scheme is expected to be robust as $ \alpha \to {1-}
$. Recently, Huang et~al.~\cite{Huang2020} obtained an $ \alpha $-robust error
estimate for a multi-term fractional diffusion problem. However, to our best
knowledge, the $ \alpha $-robust convergence of the L1 scheme with an arbitrary
sectorial operator is not available in the literature. Here we note that the
constants in the error estimates in \cite{Lubich1996,Jin2016,Jin2016-L1,Yan2018}
all depend on $ \alpha $ and that the constants in the error estimates in
\cite{Jin2018-time-dependtn} will clearly blow up as $ \alpha \to {1-} $. This
motivates us to develop new techniques to analyze the convergence of the L1
scheme with an arbitrary sectorial operator and to investigate the robustness of
the L1 scheme as $ \alpha \to {1-} $.

The theory of inverse problems for differential equations has been extensively
developed within the framework of mathematical physics. One important class of
inverse problems for parabolic equations is to reconstruct the source term, the
initial value or the boundary conditions from the value of the solution at the
final time; see \cite{Prilepko2000,Samarskii2007}. The time fractional diffusion
equation is an extension of the normal diffusion equation, widely used to model
the physical phenomena with memory effect. Hence, this paper considers the
source term identification of a time fractional diffusion equation, based on the
value of the solution at the final time. For the related theoretical results, we
refer the reader to \cite{Jin2012,Liu2010,Murio2007,Tuan2016,Tuan2017,Wei2014}
and the references therein. We apply the famous Tikhonov regularization
technique to this inverse problem and establish the convergence of its temporal
discretization that uses the L1 scheme.

The main contributions of this paper are as follows:
\begin{enumerate}
  \item the convergence of the L1 scheme for solving time fractional diffusion
    equations with an arbitrary sectorial operator of spectral angle $ < \pi/2 $
    is established;
  \item the constants in the derived error estimates will not blow up as $
    \alpha \to {1-} $, which shows that the L1 scheme is robust as $ \alpha \to
    {1-} $;
  \item the convergence analysis of a temporally discrete inverse problem
    subject to a fractional diffusion equation is provided.
\end{enumerate}
Moreover, a feature of the error estimates in this paper is that they
immediately derive the corresponding error estimates of the backward Euler
scheme, by passing to the limit $ \alpha \to {1-} $.


Before concluding this section, we would also like to mention two important
algorithms for solving fractional diffusion equations. The first algorithm uses
the convolution quadrature proposed by Lubich \cite{Lubich1986,Lubich1988}.
Lubich et al.~\cite{Lubich1996,Cuesta2006} firstly used the convolution
quadrature to design numerical methods for fractional diffusion-wave equations,
and then Jin et al.~\cite{Jin2016,Jin-corre-2017} further developed these
algorithms. The second algorithm employs the Galerkin methods to discretize the
time fractional operators, which was firstly developed by McLean and
Mustapha
\cite{Mclean2009Convergence,Mustapha2009Discontinuous,Mustapha2011Piecewise,Mustapha2014A}.

The rest of the paper is organized as follows. Section 2 introduces some
conventions, the definitions of $ \mathcal A $ and $ \mathcal A^* $, the
Riemann-Liouville fractional operators and the mild solution theory of
linear fractional diffusion equations. Section 3 derives the convergence of the
L1 scheme. Section 4 investigates an inverse problem of a fractional diffusion
equation and establishes the convergence of a temporally discrete inverse
problem. Finally, Section 5 performs three numerical experiments to verify the
theoretical results.

\section{Preliminaries} 
\label{sec:pre}
Throughout this paper, we will use the following conventions: for each linear
vector space, the scalars are the complex numbers; $ H_0^1(\Omega) $ is
a standard complex-valued Sobolev space, and $ H^{-1}(\Omega) $ is the usual
dual space of $ H_0^1(\Omega) $; $ \mathcal L(L^2(\Omega)) $ is the space of all
bounded linear operators on $ L^2(\Omega) $; for a Banach space $\mathcal{B} $,
we use $ \dual{\cdot,\cdot}_\mathcal{B} $ to denote a duality paring between $
\mathcal{B}^* $ (the dual space of $ \mathcal{B} $) and $\mathcal{B} $; for a
Lebesgue measurable subset $ \mathcal D \subset \mathbb R^l $, $ 1 \leqslant l
\leqslant 4 $, $ \dual{p, q}_{\mathcal D} $ means the integral $ \int_{\mathcal
D} p \overline q $, where $ \overline q $ is the conjugate of $ q $; for a
function $ v $ defined on $ (0,T) $, by $ v(t-) $, $ 0 < t \leqslant T $, we
mean the limit $ \lim_{s \to t-} v(s) $; the notations $c_\times, d_\times, C_\times $ mean  some
positive constants and their values may differ
at each occurrence. In addition, for any $ 0 < \theta < \pi $, define
\begin{align}
  \Sigma_\theta &:= \{
    z \in \mathbb C \setminus \{0\}:
    -\theta < \operatorname{Arg} z < \theta
  \}, \label{eq:Sigma_theta-def} \\
  \Gamma_{\theta} &:= \{
    z \in \mathbb C \setminus \{0\}:\
    \snm{\operatorname{Arg} z} = \theta
  \} \cup \{0\} \label{eq:Upsilon-def} \\
  \Upsilon_\theta &:= \{
    z \in \Gamma_{\theta}:\
    -\pi \leqslant \Im z \leqslant \pi
  \}, \label{eq:Upsilon1-def}
\end{align}
where $ \Gamma_{\theta} $ and $ \Upsilon_\theta $ are so oriented that the
negative real axis is to their left. For the integral $ \int_{\Gamma_\theta} v
\, \mathrm{d}z $ or $ \int_{\Upsilon_\theta} v \, \mathrm{d}z $, if $ v $ has
singularity or is not defined at the origin, then $ \Gamma_\theta $ or $
\Upsilon_\theta $ should be deformed so that the origin is to its left; for
example, $ \Gamma_\theta $ is deformed to
\[
  \{
    z \in \mathbb C: \, \snm{z} > \epsilon, \,
    \snm{\operatorname{Arg} z} = \theta
  \} \cup \{
    z \in \mathbb C: \, \snm{z} = \epsilon, \,
    \snm{\operatorname{Arg} z} \leqslant \theta
  \},
\]
where $ 0 < \epsilon < \infty $.

\medskip\noindent{\bf Riemann-Liouville fractional calculus operators.} Assume
that $ -\infty \leqslant a < b \leqslant \infty $ and $ X $ is a Banach space.
For any $ \gamma > 0 $, define
\begin{align*}
  \left( \D_{a+}^{-\gamma}  v\right)(t) &:=
  \frac1{ \Gamma(\gamma) }
  \int_a^t (t-s)^{\gamma-1} v(s) \, \mathrm{d}s,
  \quad \text{a.e.}~t \in (a,b), \\
  \left(\D_{b-}^{-\gamma}  v\right)(t) &:=
  \frac1{ \Gamma(\gamma) }
  \int_t^b (s-t)^{\gamma-1} v(s) \, \mathrm{d}s,
  \quad\text{a.e.}~t \in (a,b),
\end{align*}
for all $ v \in L^1(a,b;X) $, where $ \Gamma(\cdot) $ is the gamma function. In
addition, let $ \D_{a+}^0 $ and $ \D_{b-}^0 $ be the identity operator on $
L^1(a,b;X) $. For $ j - 1 < \gamma \leqslant j $, $ j \in \mathbb N_{>0} $,
define
\begin{align*}
  \D_{a+}^\gamma v & := \D^j \, \D_{a+}^{\gamma-j}v, \\
  \D_{b-}^\gamma v & := (-\D)^j \, \D_{b-}^{\gamma-j}v,
\end{align*}
for all $ v \in L^1(a,b;X) $, where $ \D $ is the first-order differential
operator in the distribution sense.

\medskip\noindent{\bf Definitions of $ \mathcal A $ and $ \mathcal A^* $.} Let $
\mathcal A: H_0^1(\Omega) \to H^{-1}(\Omega) $ be a second-order partial
differential operator of the form
\[
  \mathcal A v := \sum_{i,j=1}^d \frac{\partial}{\partial_{x_i}}
  (a_{ij}(x) \frac{\partial}{\partial x_j} v ) +
   b(x) \cdot \nabla v + c(x)v,
  \quad \forall v \in H_0^1(\Omega),
\]
where $ a_{ij} \in L^\infty(\Omega) $, $  b \in [L^\infty(\Omega)]^d $ and $
c \in L^\infty(\Omega) $ are real-valued. Assume that $ \mathcal A:
H_0^1(\Omega) \to H^{-1}(\Omega) $ is a sectorial operator satisfying that
\begin{subequations}
\begin{numcases}{}
  \rho(\mathcal A) \supset \Sigma_{\omega_0},
  \label{eq:rho(A)} \\
  \nm{R(z,\mathcal A)}_{\mathcal L(L^2(\Omega))} \leqslant
  \mathcal M_0 \, |z|^{-1} \quad
  \forall z \in \Sigma_{\omega_0},
  \label{eq:R(z,A)} \\
  \dual{\mathcal Av,v}_{H_0^1(\Omega)}  \leqslant 0,
  \quad \forall v \in H_0^1(\Omega),
  \label{eq:A-positive}
\end{numcases}
\end{subequations}
where $ \rho(\mathcal A) $ is the resolvent set of $ \mathcal A $, $ \pi/2 <
\omega_0 < \pi $, $ R(z,\mathcal A) := (z-\mathcal A)^{-1} $, and $ \mathcal M_0
$ is a positive constant. Define the adjoint operator $ \mathcal A^*:
H_0^1(\Omega) \to H^{-1}(\Omega) $ of $ \mathcal A $ by that
\[
  \mathcal A^* v := \sum_{i,j=1}^d \frac{\partial}{\partial_{x_j}}
  (a_{ij}(x) \frac{\partial}{\partial x_i} v ) -
  \nabla \cdot ( b(x) v) + c(x)v,
  \quad \forall v \in H_0^1(\Omega).
\]
It is evident that
\[
  \dual{\mathcal A v, w}_{H_0^1(\Omega)} =
  \overline{
    \dual{\mathcal A^*w, v}_{H_0^1(\Omega)}
  }
  \quad\text{ for all } v, w \in H_0^1(\Omega).
\]

\medskip\noindent{\bf Solutions of the fractional diffusion equation.} For any $ t >
0 $, define
\begin{equation}
  \label{eq:E-def}
  E(t) := \frac1{2\pi i} \int_{\Gamma_{\omega_0}}
  e^{tz} R(z^\alpha,\mathcal A) \, \mathrm{d}z.
\end{equation}
By (\ref{eq:R(z,A)}), it is evident that $ E $ is an $ \mathcal L(L^2(\Omega))
$-valued analytic function on $ (0,\infty) $. Moreover, a direct computation
gives the following two estimates (cf.~Jin et al.~\cite{Jin2016}): for any $ t >
0 $,
\begin{align} 
  \nm{E(t)}_{\mathcal L(L^2(\Omega))}
  & \leqslant C_{\omega_0,\mathcal M_0} t^{\alpha-1},
  \label{eq:E} \\
  \nm{E'(t)}_{\mathcal L(L^2(\Omega))}
  & \leqslant C_{\omega_0,\mathcal M_0} t^{\alpha-2}.
  \label{eq:E'}
\end{align}
For any $ g \in L^1(0,T;L^2(\Omega)) $, we call
\begin{equation}
  \label{eq:Sg-l1}
  (Sg)(t) := (E*g) (t)  =  \int_0^t E(t-s) g(s) \, \mathrm{d}s,
  \quad \text{a.e.}~0 < t \leqslant T,
\end{equation}
the mild solution to the following fractional diffusion equation
\begin{equation}
  \label{eq:linear}
  (\D_{0+}^\alpha - \mathcal A) w = g, \quad \mbox{with} \;  w(0)=0,
\end{equation}
where the symbol $*$ denotes the convolution.

If $ g = v \delta_0 $ with $ v \in L^2(\Omega) $ and $ \delta_0 $ being the
Dirac measure (in time) concentrated at $ t=0 $, then we call
\begin{equation}
  \label{eq:Sdelta}
  (S(v\delta_0))(t) := E(t) v, \quad 0 < t \leqslant T,
\end{equation}
the mild solution to equation \eqref{eq:linear}. Symmetrically, for any $ g \in
L^1(0,T;L^2(\Omega)) $, we call
\begin{equation} 
  \label{eq:S*g}
  (S^*g)(t) := \int_t^T E^*(s-t) g(s) \, \mathrm{d}s,
  \quad \text{a.e.}~0 < t < T,
\end{equation}
the mild solution to the following backward  fractional diffusion equation:
\begin{equation}
  \label{eq:linear*}
  (\D_{T-}^\alpha - \mathcal A^*) w = g, \quad \mbox{with} \; w (T)=0.
\end{equation}
If $ g = v\delta_T $ with $ v \in L^2(\Omega) $ and $ \delta_T $ being the Dirac
measure (in time) concentrated at $ t=T $, then we call
\begin{equation}
  \label{eq:S*delta}
  (S^*(v\delta_T))(t) := E^*(T-t) v, \quad 0 < t \leqslant T,
\end{equation}
the mild solution to equation \eqref{eq:linear*}. The above $ E^* $ is defined by
\begin{equation}
  \label{eq:E*-def}
  E^*(t) := \frac1{2\pi i} \int_{\Gamma_{\omega_0}}
  e^{tz} R(z^\alpha,\mathcal A^*) \, \mathrm{d}z,
  \quad t > 0.
\end{equation}
Similarly to \eqref{eq:E}, \eqref{eq:E'}, for any $ t>0 $, we have
\begin{align} 
  \nm{E^*(t)}_{\mathcal L(L^2(\Omega))}
  & \leqslant C_{\omega_0,\mathcal M_0} t^{\alpha-1},
  \label{eq:E*} \\
  \nm{(E^*)'(t)}_{\mathcal L(L^2(\Omega))}
  & \leqslant C_{\omega_0,\mathcal M_0} t^{\alpha-2}.
  \label{eq:E*'}
\end{align}
Evidently, for any $ t > 0 $, $ E^*(t) $ is the adjoint operator of $ E(t) $ in
the sense that
\begin{equation}
  \label{eq:E-E*}
  \dual{E(t)v, w}_\Omega = \dual{v, E^*(t)w}_\Omega
  \quad \forall v, w \in L^2(\Omega).
\end{equation}

\begin{remark} 
  By \eqref{eq:E}, a routine calculation (cf.~\cite[Theorem 2.6]{Diethelm2010})
  yields that
  \begin{equation} 
    \label{eq:Sg-C}
    \nm{Sg}_{C([0,T];L^2(\Omega))} \leqslant
    C_{\alpha,q,\omega_0,\mathcal M_0,T} \nm{g}_{L^q(0,T;L^2(\Omega))}
  \end{equation}
  for all $ g \in L^q(0,T;L^2(\Omega)) $ with $ q > 1/\alpha $.
\end{remark}

\begin{remark} 
  For the above solution theory of fractional diffusion equations, we refer the
  reader to \cite{Lubich1996,McLean2010-B,Jin2016}.
\end{remark}

\medskip\noindent{\bf The L1 scheme.} Let $ J \in \mathbb N_{>0} $ and define $
t_j := j\tau $ for each $j=0, 1, 2, \dots, J$, where $ \tau := T/J $. Define $
b_j := j^{1-\alpha}/\Gamma(2-\alpha)$ for each $ j \in \mathbb N $. Assume that
$ g \in L^1(0,T;H^{-1}(\Omega)) $. Applying the L1 scheme \cite{Lin2007} to
problem \eqref{eq:linear} yields the following discretization: seek $
\{W_j\}_{j=1}^J \subset H_0^1(\Omega) $ such that, for any $ 1 \leqslant k
\leqslant J $,
\begin{equation}
  \label{eq:L1}
  b_1 W_k + \sum_{j=1}^{k-1} (b_{k-j+1}-2b_{k-j}+b_{k-j-1})
  W_j - \tau^\alpha \mathcal A W_k =
  \tau^{\alpha-1} \int_{t_{k-1}}^{t_k} g(t) \, \mathrm{d}t
\end{equation}
in $ H^{-1}(\Omega) $, where $ W_{j} $, $ 1 \leqslant j \leqslant J $, is an
approximation of $ w(t_{j}) $. Symmetrically, applying the L1 scheme to problem
\eqref{eq:linear*} yields the following discretization: seek $ \{\mathcal
W_j\}_{j=1}^J \subset H_0^1(\Omega) $ such that, for any $ 1 \leqslant k
\leqslant J $,
\begin{equation}
  \label{eq:L1*}
  b_1 \mathcal W_k + \sum_{j=k+1}^J ( b_{j-k+1} - 2b_{j-k} + b_{j-k-1} )
  \mathcal W_j - \tau^\alpha \mathcal A^* \mathcal W_k =
  \tau^{\alpha-1} \int_{t_{k-1}}^{t_k} g(t) \, \mathrm{d}t
\end{equation}
in $ H^{-1}(\Omega) $. For each $ 1 \leqslant j \leqslant J $, we will use $
S_{\tau,j} g $ and $ S_{\tau,j}^* g $ to denote the above $ W_j $ and $ \mathcal
W_j $, respectively, that is
\begin{equation} \label{eq:Staug}
S_{\tau,j} g := W_j, \quad      S_{\tau,j}^* g := \mathcal W_j.
\end{equation}

In addition, for each $ 1 \leqslant j \leqslant J $, we
define
\begin{equation}
  \label{eq:Stau-delta}
  \mathcal S_{\tau,j}(v\delta_0) := \mathcal
  S_{\tau,j}(v\widehat\delta_0), \quad
  \mathcal S_{\tau,j}^*(v\delta_T) := \mathcal
  S_{\tau,j}^*(v\widehat\delta_T), \quad
\end{equation}
where $ v \in H^{-1}(\Omega) $ and
\begin{align}
  \widehat\delta_0(t) &:=
  \begin{cases}
    \tau^{-1} & \text{ if } 0 < t < t_1, \\
    0 & \text{ if } t_1 < t < T,
  \end{cases} \label{eq:delta_0} \\
  \widehat\delta_T(t) &:=
  \begin{cases}
    0 & \text{ if } 0 < t < t_{J-1}, \\
    \tau^{-1} & \text{ if } t_{J-1} < t < T.
  \end{cases}
  \label{eq:delta_T}
\end{align}

\section{Convergence of the L1 scheme}
\label{sec:L1}
\begin{theorem} 
  \label{thm:conv-Stau}
  Let $ 0< \alpha <1$.  Let $Sg$ and $S_{\tau, j} g$ be defined by \eqref{eq:Sg-l1}
  and \eqref{eq:Staug}, respectively.  Then we have the following estimates:
  \begin{enumerate}
    \item
      For any $ g \in L^\infty(0,T;L^2(\Omega)) $,
      \begin{small}
      \begin{equation}
        \label{eq:S-Stau-g}
        \max_{1 \leqslant j \leqslant J}
        \nm{(Sg)(t_j)-S_{\tau,j}g}_{L^2(\Omega)}
        \leqslant C_{\omega_0,\mathcal M_0}
        \tau^\alpha \Big(
          \frac1\alpha + \frac{1-J^{\alpha-1}}{1-\alpha}
        \Big) \nm{g}_{L^\infty(0,T;L^2(\Omega))}.
      \end{equation}
      \end{small}
    \item
      For any $ v \in L^2(\Omega) $,
      \begin{small}
      \begin{align} 
        \max_{1 \leqslant j \leqslant J} j^{2-\alpha} \nm{
          S(v\delta_0)(t_j) - S_{\tau,j}(v\delta_0)
        }_{L^2(\Omega)} & \leqslant
        C_{\omega_0,\mathcal M_0} \tau^{\alpha-1}
        \nm{v}_{L^2(\Omega)},
        \label{eq:S-Stau-vdelta} \\
        \sum_{j=1}^J \nm{
          S(v\delta_0) - S_{\tau,j}(v\delta_0)
        }_{L^1(t_{j-1},t_j;L^2(\Omega))} & \leqslant
        C_{\omega_0,\mathcal M_0} \tau^\alpha
        \Big(
          \frac1\alpha + \frac{1-J^{\alpha-1}}{1-\alpha}
        \Big) \nm{v}_{L^2(\Omega)}.
        \label{eq:S-Stau-vdelta-l1}
      \end{align}
      \end{small}
  \end{enumerate}
\end{theorem}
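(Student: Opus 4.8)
The plan is to recast the L1 recurrence \eqref{eq:L1} through its generating function and to compare the resulting discrete solution operator with the contour representation of $E$ in \eqref{eq:E}. Writing $g_j := \tau^{-1}\int_{t_{j-1}}^{t_j} g$ and forming $W(\xi) := \sum_{k\ge 1} W_k\xi^k$, the convolution structure of \eqref{eq:L1} collapses to $\beta(\xi)W(\xi) - \tau^\alpha \mathcal A W(\xi) = \tau^\alpha\sum_{k\ge1}g_k\xi^k$, where a short computation using $ b_j = j^{1-\alpha}/\Gamma(2-\alpha)$ gives $\beta(\xi) = b_1 + \sum_{m\ge1}(b_{m+1}-2b_m+b_{m-1})\xi^m = \xi^{-1}(1-\xi)^2 B(\xi)$ with $B(\xi) = \Gamma(2-\alpha)^{-1}\operatorname{Li}_{\alpha-1}(\xi)$. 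Hence $W(\xi) = R(\tau^{-\alpha}\beta(\xi),\mathcal A)\sum_k g_k\xi^k$, and Cauchy's formula followed by the substitution $\xi = e^{-z\tau}$ yields, after deforming the vertical segment $\{|\Im(z\tau)|\le\pi\}$ onto $\tau^{-1}\Upsilon_{\omega_0}$,
\[
  S_{\tau,k}g = \frac{\tau}{2\pi i}\int_{\tau^{-1}\Upsilon_{\omega_0}} e^{zt_k}\, R\!\big(\delta_\tau(z),\mathcal A\big)\, \Big(\sum_{j\ge1} g_j e^{-jz\tau}\Big)\,\mathrm dz,
  \qquad \delta_\tau(z) := \tau^{-\alpha}\beta(e^{-z\tau}).
\]
This is the exact discrete analogue of $(Sg)(t_k) = \frac{1}{2\pi i}\int_{\Gamma_{\omega_0}} e^{zt_k}R(z^\alpha,\mathcal A)\widehat g(z)\,\mathrm dz$, with $z^\alpha$ replaced by the discrete symbol $\delta_\tau(z)$ and $\widehat g$ by its Riemann-sum surrogate.

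The crux is a pair of $\alpha$-uniform estimates on $\delta_\tau$ along the contour. Using the expansion $\operatorname{Li}_{\alpha-1}(e^{-w}) = \Gamma(2-\alpha)w^{\alpha-2}(1+O(w))$ as $w\to0$ together with $(1-e^{-w})^2 = w^2(1+O(w))$, one obtains first that $\delta_\tau(z)$ stays inside $\Sigma_{\omega_0}$ whenever $z\in\tau^{-1}\Upsilon_{\omega_0}$ and $|\Im(z\tau)|\le\pi$ — so that \eqref{eq:R(z,A)} applies and $\nm{R(\delta_\tau(z),\mathcal A)}_{\mathcal L(L^2(\Omega))}\le \mathcal M_0|\delta_\tau(z)|^{-1}\le C_{\omega_0}|z|^{-\alpha}$ — and second the consistency bound $|\delta_\tau(z)-z^\alpha|\le C_{\omega_0}\tau|z|^{1+\alpha}$ for $|z\tau|\le\pi$. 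Proving these with constants that do not degenerate as $\alpha\to1$ is the main obstacle: the sector-preservation must be shown to survive the limit in which the scheme reduces to backward Euler and $z^\alpha\to z$, and the polylogarithm remainder must be controlled uniformly in $\alpha\in(0,1)$. A parallel Riemann-sum estimate for $|\widehat g(z) - \sum_j g_j e^{-jz\tau}|$ handles the data discrepancy, with the Dirac normalizations \eqref{eq:delta_0}, \eqref{eq:delta_T} making this term vanish in the $v\delta_0$ case.

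With these in hand the error splits into the symbol error, the data error, and the tail $|\Im(z\tau)|>\pi$ cut off by the periodicity of $\xi=e^{-z\tau}$. The symbol part is controlled through the resolvent identity $R(z^\alpha,\mathcal A) - R(\delta_\tau(z),\mathcal A) = (\delta_\tau(z)-z^\alpha)R(z^\alpha,\mathcal A)R(\delta_\tau(z),\mathcal A)$, whose norm is $\le C\tau|z|^{1+\alpha}\cdot|z|^{-\alpha}\cdot|z|^{-\alpha} = C\tau|z|^{1-\alpha}$; combined with the $O(|z|^{-1})$ decay of the data factor this leaves $C\tau|z|^{-\alpha}$ against $e^{zt_k}$. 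Integrating radially along $\tau^{-1}\Upsilon_{\omega_0}$ up to the cutoff $r\sim\pi/\tau$ produces exactly $\tau^\alpha\big(\tfrac1\alpha + \tfrac{1-J^{\alpha-1}}{1-\alpha}\big)$; the cleanest way to see the two constants is the real-variable reading, where $\tau^\alpha/\alpha = \int_0^\tau s^{\alpha-1}\,\mathrm ds$ is the contribution of the most singular interval $[0,\tau]$ governed by \eqref{eq:E}, while $\tau^\alpha\frac{1-J^{\alpha-1}}{1-\alpha} = \tau\int_\tau^T s^{\alpha-2}\,\mathrm ds$ is the accumulated contribution of \eqref{eq:E'} over $[\tau,T]$. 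This proves \eqref{eq:S-Stau-g}.

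For the Dirac datum $g=v\delta_0$ the representation reduces the error to $(E(t_k)-E_{\tau,k})v$, where $E_{\tau,k} := \frac{\tau}{2\pi i}\int_{\tau^{-1}\Upsilon_{\omega_0}} e^{zt_k}R(\delta_\tau(z),\mathcal A)\,\mathrm dz$ is the discrete kernel and the data error vanishes by \eqref{eq:delta_0}, so only the symbol error survives. Deforming the contour so that its radius scales like $t_k^{-1}=(k\tau)^{-1}$ and using $|\delta_\tau(z)-z^\alpha|\le C\tau|z|^{1+\alpha}$ converts the extra exponential decay into the algebraic weight $k^{-(2-\alpha)}$, giving $\nm{E(t_k)-E_{\tau,k}}_{\mathcal L(L^2(\Omega))}\le C\tau^{\alpha-1}k^{\alpha-2}$ and hence \eqref{eq:S-Stau-vdelta}. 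Finally, \eqref{eq:S-Stau-vdelta-l1} follows by integrating the pointwise bound over each subinterval and summing; the $L^1$-in-time accumulation reproduces the same constant $\tau^\alpha\big(\tfrac1\alpha+\tfrac{1-J^{\alpha-1}}{1-\alpha}\big)$ as in part (1), now driven by $\int_0^\tau\nm{E(s)}\,\mathrm ds$ and $\int_\tau^T\nm{E'(s)}\,\mathrm ds$.
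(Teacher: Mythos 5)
Your proposal reproduces the paper's overall architecture — generating function (discrete Laplace transform), contour representation of $S_{\tau,j}$ with the discrete symbol $\delta_\tau(z)=\tau^{-\alpha}e^{-z\tau}\psi(z\tau)$ in the paper's notation, resolvent perturbation via $\snm{\psi(w)-w^\alpha}\leqslant C\snm{w}^{1+\alpha}$, the node-wise kernel bound $C\tau^{\alpha-1}j^{\alpha-2}$, and the $L^1$-in-time accumulation — but it has a genuine gap exactly at the point you yourself call ``the main obstacle'': the $\alpha$-uniform sector preservation $\delta_\tau(z)\in\Sigma_{\omega_0}$ together with the lower bound $\snm{\delta_\tau(z)}\geqslant C\snm{z}^\alpha$ along the deformed contour. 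You justify this only by the near-origin expansion $\operatorname{Li}_{\alpha-1}(e^{-w})=\Gamma(2-\alpha)w^{\alpha-2}(1+O(w))$, but the contour runs up to $\snm{\Im(z\tau)}=\pi$, where $w=z\tau$ is not small and the expansion says nothing; the paper handles that portion with Lemma \ref{lem:psi}, whose proof of \eqref{eq:psi-1} must play the vanishing prefactor $\sin(\pi(1-\alpha))$ off against the $(1-\alpha)^{-1}$ divergence of the integral in \eqref{eq:re-psi} to get a constant independent of $\alpha$, and with the derivative bound \eqref{eq:psi-2} to push the contour into $\Re z<0$. Moreover, you claim preservation on the contour of angle $\omega_0$ itself; this cannot follow with $\alpha$-uniform constants from $\operatorname{Arg}\big(e^{-z}\psi(z)\big)=\alpha\operatorname{Arg}(z)+\operatorname{Arg}(1+zG(\alpha,z))$, since as $\alpha\to1^-$ the available margin $(1-\alpha)\omega_0$ vanishes while the perturbation term is only $O(\snm{z})$. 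This is precisely why Lemma \ref{lem:psi-esti} does \emph{not} deform to $\Upsilon_{\omega_0}$ but to $\Upsilon_{\omega^*}$ with a reduced angle $\omega^*$ close to $\pi/2$ (the proof restricts to $\Sigma_{(\theta_0+\pi/2)/2}$ in Step 2 and then shrinks further to $\theta^*=\pi/2+\arctan(\epsilon_1/\pi)$); resolving this is the paper's central technical contribution (Lemmas \ref{lem:wtb}--\ref{lem:psi-esti}), and your proposal defers rather than supplies it.

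A secondary issue: your treatment of the data discrepancy and of the final constant is not viable as sketched. The paper never needs a Riemann-sum estimate for $\widehat g(z)-\tau\sum_j g_je^{-jz\tau}$ (which for $g$ merely in $L^\infty(0,T;L^2(\Omega))$ would not yield usable decay); instead, \eqref{eq:Stau-g} expresses $S_{\tau,j}g=\int_0^{t_j}\mathcal E(t_j-t)g(t)\,\mathrm{d}t$ with a piecewise-constant kernel $\mathcal E$, so that the error is bounded by $\nm{E-\mathcal E}_{L^1(0,T;\mathcal L(L^2(\Omega)))}\nm{g}_{L^\infty(0,T;L^2(\Omega))}$ via Young's inequality, and the $\alpha$-robust constant $\tau^\alpha\big(\alpha^{-1}+(1-J^{\alpha-1})(1-\alpha)^{-1}\big)$ comes from the real-variable splitting in Lemma \ref{lem:int-E-calE} (variation of $E$ on subintervals via \eqref{eq:E}, \eqref{eq:E'}, plus the node-wise bound of Lemma \ref{lem:E-calE}). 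Your purely radial contour integration would instead produce $\Gamma(1-\alpha)$-type factors that blow up as $\alpha\to1^-$; the ``real-variable reading'' you mention in passing is not merely the cleanest route to the stated constant — it is the necessary one.
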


\begin{remark} 
  Assume that $ g \in L^\infty(0,T;L^2(\Omega)) $. Passing to the limit $ \alpha
  \to {1-} $ in \eqref{eq:L1} and \eqref{eq:S-Stau-g} yields that, for the parabolic equation
  \[
    w' - \mathcal A w = g, \quad\text{with $ w(0) = 0 $},
  \]
  and the corresponding backward Euler scheme
  \[
    \begin{cases}
      W_0 = 0, \\
      W_k - W_{k-1} - \tau \mathcal A W_k =
      \int_{t_{k-1}}^{t_k} g(t) \, \mathrm{d}t,
      \quad 1 \leqslant k \leqslant J,
    \end{cases}
  \]
  one has the error estimate, noting that $\lim_{\alpha \to 1} \frac{1-J^{\alpha-1}}{1-\alpha} = \ln J$,
  \[
    \max_{1 \leqslant j \leqslant J}
    \nm{w(t_j)-W_j}_{L^2(\Omega)}
    \leqslant C_{\omega_0,\mathcal M_0}
    (1 + \ln J) \tau \nm{g}_{L^\infty(0,T;L^2(\Omega))}.
  \]

\end{remark}

\begin{remark}
  Let us consider the following time fractional diffusion equation
  \[
    \D_{0+}^\alpha(y - y_0)(t) - \mathcal Ay(t) = 0, \quad
    0 < t \leqslant T, \quad
    \text{with } y(0) = y_0,
  \]
  where $ y_0 \in L^2(\Omega) $ is given. Applying the L1 scheme to this
  equation yields the following discretization: seek $ \{W_j\}_{j=1}^J \subset
  H_0^1(\Omega) $ such that, for any $ 1 \leqslant k \leqslant J $,
  \begin{equation*}
    b_1 W_k + \sum_{j=1}^{k-1} (b_{k-j+1}-2b_{k-j}+b_{k-j-1})
    W_j - \tau^\alpha \mathcal A W_k =
    \tau^{\alpha-1} (b_k - b_{k-1}) y_0
  \end{equation*}
  in $ H^{-1}(\Omega) $. Following the proof of \cite[Theorem 3.1]{Jin2016-L1},
  we can use the technical results in Subsection 3.1 to derive that, for any $ 1
  \leqslant j \leqslant J $,
  \[
    \nm{y(t_j) - W_j}_{L^2(\Omega)} \leqslant
    C_{\omega_0,\mathcal M_0} \tau t_j^{-1} \nm{y_0}_{L^2(\Omega)}.
  \]
\end{remark}

The main task of the rest of this section is to prove the above theorem.

\subsection{Some technical results}
\label{ssec:foo}
Define the discrete Laplace transform of $ \{b_j\}_{j=1}^\infty $ by that
\[
  \widehat b(z) := \sum_{j=1}^\infty b_j e^{-jz},
  \quad z \in \Sigma_{\pi/2}.
\]
By the analytic continuation technique, $ \widehat b $ has an analytic
continuation (cf.~\cite[Equation (21)]{Mclean2015Time})
\begin{equation}
  \label{eq:wtb-int}
  \widehat b(z) = \frac1{2\pi i}
  \int_{-\infty}^{(0+)} \frac{e^{w-z}}{1-e^{w-z}}
  w^{\alpha-2} \, \mathrm{d}w,
  \quad z \in \Sigma_{\pi},
\end{equation}
where $ \int_{-\infty}^{({0+})} $ means an integral on a piecewise smooth and
non-self-intersecting path enclosing the negative real axis and orienting
counterclockwise, and $ 0 $ and $ \{z+2k\pi i \neq 0: k \in \mathbb Z\} $ lie on
the different sides of this path. Define
\begin{equation}
  \label{eq:psi-def}
  \psi(z) := (e^z-1)^2 \, \widehat b(z),
  \quad z \in \Sigma_\pi.
\end{equation}
For $ z = x + iy \in \mathbb C \setminus (-\infty,0] $, we have that
(cf.~\cite[Equation (3.7)]{Jin2016-L1})
\begin{small}
\begin{equation}
  \label{eq:re-psi}
  \Re \big( e^{-z} \psi(z) \big) =
  \frac{\sin(\pi(1\!-\!\alpha))}\pi \!
  \int_0^\infty \! \frac{
    s^{\alpha-2}(1\!-\!e^{-s})
    (1\!+\!e^{-2x-s} \!-\! e^{-x-s}\cos y \!-\! e^{-x}\cos y)
  }{
    1-2e^{-x-s}\cos y + e^{-2x-2s}
  } \mathrm{d}s.
\end{equation}
\end{small}

\begin{lemma}
  \label{lem:wtb}
  For any $ L > 0 $, we have
  \begin{equation}
    \label{eq:wtb}
    \sup_{0 < \alpha < 1} \quad \sup_{
      \substack{
        z \in \Sigma_\pi \\
        -L \leqslant \Re z \leqslant 0 \\
        -\pi \leqslant \Im z \leqslant \pi
      }
    } \Snm{\widehat b(z) - z^{\alpha-2}} = C_L.
  \end{equation}
\end{lemma}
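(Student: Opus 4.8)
The plan is to write the difference $\widehat b(z)-z^{\alpha-2}$ as a \emph{single} contour integral and to show that its integrand, although assembled from two individually singular pieces, combines into a bounded quantity that can be estimated uniformly in $\alpha$. I would start from the representation \eqref{eq:wtb-int} and pair it with an analogous representation of the power $z^{\alpha-2}$ on the \emph{same} Hankel-type contour $\mathcal H$ (the one wrapping the negative real axis counterclockwise, with $0$ to its left and the points $z+2k\pi i$ to its right). Writing $s=2-\alpha\in(1,2)$, the pole at $w=z$ lies outside $\mathcal H$, and the large-circle contribution of $w^{\alpha-2}/(z-w)$ decays like $R^{\alpha-2}\to 0$; so the residue theorem gives
\begin{equation*}
  z^{\alpha-2}=\frac{1}{2\pi i}\int_{\mathcal H}\frac{w^{\alpha-2}}{z-w}\,\mathrm{d}w .
\end{equation*}
Subtracting this from \eqref{eq:wtb-int} yields
\begin{equation*}
  \widehat b(z)-z^{\alpha-2}=\frac{1}{2\pi i}\int_{\mathcal H}G(w,z)\,w^{\alpha-2}\,\mathrm{d}w,
  \qquad
  G(w,z):=\frac{e^{w-z}}{1-e^{w-z}}+\frac{1}{w-z}.
\end{equation*}

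Next I would record the structural properties of $G$, which crucially does \emph{not} depend on $\alpha$. Setting $u=w-z$ and using $e^{u}/(1-e^{u})=-1-1/(e^{u}-1)$, one finds $G(w,z)=-1-\big(\tfrac{1}{e^{u}-1}-\tfrac1u\big)$; since $\tfrac{1}{e^{u}-1}-\tfrac1u$ is analytic at $u=0$ with value $-\tfrac12$, the pole of each summand at $w=z$ cancels and $G$ extends across $w=z$ with value $-\tfrac12$ there. The remaining poles of $G$ sit at $w=z+2k\pi i$, $k\neq 0$, and all satisfy $\snm{\Im w}\geqslant\pi$ because $\snm{\Im z}\leqslant\pi$, so they stay bounded away from $\mathcal H$. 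Finally, as $\Re w\to-\infty$ along the rays one has $G(w,z)=\tfrac{1}{w-z}+O(e^{\Re(w-z)})$, whence $\snm{G(w,z)}\leqslant C_L/\snm{w}$ for $\snm w$ large, while $G$ is bounded on the bounded part of $\mathcal H$. Because the poles have modulus $\geqslant\pi$, I may deform $\mathcal H$ to the fixed contour $\mathcal H_1$ consisting of the circle $\snm w=1$ together with the two rays $\{w=re^{\pm i\pi}:r\geqslant 1\}$ without crossing any singularity.

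With this in hand the estimate is a matter of splitting $\int_{\mathcal H_1}$ into the unit circle and the two rays. On the circle $\snm{w^{\alpha-2}}=1$ and $\snm G\leqslant C_L$, giving an $O(1)$ contribution uniformly in $\alpha$. On the rays $\snm{w^{\alpha-2}}=r^{\alpha-2}$, and I would use $\snm G\leqslant C_L$ for $1\leqslant r\leqslant r_0(L)$ and $\snm G\leqslant C_L/r$ for $r\geqslant r_0$, so that the tail is controlled by $\int_{r_0}^{\infty}r^{\alpha-3}\,\mathrm{d}r=r_0^{\alpha-2}/(2-\alpha)$ and the middle piece by $\int_1^{r_0}r^{\alpha-2}\,\mathrm{d}r$, both of which are continuous in $\alpha$ on $[0,1]$ and therefore bounded by a constant depending only on $L$. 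Summing the three pieces yields $\snm{\widehat b(z)-z^{\alpha-2}}\leqslant C_L$.

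The step I expect to be the main obstacle is precisely this uniformity as $\alpha\to 1-$. If one merely bounded $\snm G$ by a constant over all of $\mathcal H_1$, the ray integral would reduce to $\int_1^{\infty}r^{\alpha-2}\,\mathrm{d}r=(1-\alpha)^{-1}$, which blows up as $\alpha\to 1-$. This reflects the fact that the ``constant part'' of $G$ (the $-1$ coming from $e^{u}/(1-e^{u})$) integrates to zero against the power, $\int_{\mathcal H}w^{\alpha-2}\,\mathrm{d}w=0$; retaining this cancellation is exactly what the decay bound $\snm G\lesssim 1/\snm w$ encodes, turning the tail into the uniformly convergent $\int r^{\alpha-3}\,\mathrm{d}r$. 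The hypothesis $z\in\Sigma_\pi$ with $\snm{\Im z}\leqslant\pi$ plays a double role here: it keeps $z^{\alpha-2}$ single-valued and, more importantly, confines all genuine poles of $G$ to $\snm{\Im w}\geqslant\pi$, so that along $\mathcal H_1$ the only singularity $G$ can ever approach is the removable one at $w=z$, where $G$ remains bounded.
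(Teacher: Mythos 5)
Your proof is correct, and it reaches \eqref{eq:wtb} by a route that is recognizably related to, but mechanically different from, the paper's. Both arguments rest on the same observation: the kernel $e^{w-z}/(1-e^{w-z})$ has a simple pole at $w=z$ with residue $-1$, and this pole is what produces the term $z^{\alpha-2}$. The paper exploits it by deforming the Hankel contour of \eqref{eq:wtb-int} \emph{across} that pole (separately on the half-strips $0\leqslant\Im z\leqslant\pi$ and $-\pi\leqslant\Im z\leqslant 0$) onto the fixed contour consisting of the lines $\Im w=-\pi$ and $\Im w=3\pi/2$ joined by the segment $\Re w=1$; the residue theorem yields the $z^{\alpha-2}$ term, and the remainder $G(\alpha,z)$ is the original, exponentially decaying integrand on the new contour, whose uniform bound is obtained by checking joint continuity of $G$ on the compact set $[0,1]\times\{\xi:\,-L\leqslant\Re\xi\leqslant 0,\ 0\leqslant\Im\xi\leqslant\pi\}$. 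You instead cancel the pole \emph{before} deforming: you represent $z^{\alpha-2}$ as a Cauchy-type integral over the same Hankel contour, so that the combined integrand $g(w-z)\,w^{\alpha-2}$, with $g(u)=e^u/(1-e^u)+1/u$, has only a removable singularity at $w=z$; you then deform to the fixed unit-circle-plus-rays contour and estimate explicitly, the $\alpha$-uniformity coming from $\snm{g(w-z)}\leqslant C_L/\snm{w}$ together with $\int_{r_0}^\infty r^{\alpha-3}\,\mathrm{d}r\leqslant 1$. What each approach buys: the paper keeps an exponentially decaying integrand, so convergence of the remainder integral is immediate and the uniformity in $\alpha$ can be delegated to a compactness argument; your version needs the decay bound on $g$ (as you correctly stress, a crude $O(1)$ bound would give $\int_1^\infty r^{\alpha-2}\,\mathrm{d}r=(1-\alpha)^{-1}$ and destroy the robustness as $\alpha\to1-$), but in exchange it is fully explicit, treats the whole strip $-\pi\leqslant\Im z\leqslant\pi$ in one stroke, and is slightly more robust at the boundary: on your contour every genuine pole $w=z+2k\pi i$, $k\neq0$, stays at distance at least $\pi-1$ from the path, whereas the paper's line $\Im w=-\pi$ is touched by the pole $z-2\pi i$ in the limiting case $\Im z=\pi$ (a harmless defect repaired by continuity, but one your fixed contour never creates).
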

\begin{proof} 
  For any $ z \in \Sigma_\pi $ satisfying $ -L \leqslant \Re z \leqslant 0 $ and
  $ 0 \leqslant \Im z \leqslant \pi $, by \eqref{eq:wtb-int}, Cauchy's integral
  theorem and the residue theorem we obtain
  \begin{small}
  \begin{align*}
    \widehat b(z) & = z^{\alpha-2} + \frac1{2\pi i}
    \int_{-\infty -i\pi}^{1-i\pi}
    \frac{e^{w-z}}{1-e^{w-z}} w^{\alpha-2} \, \mathrm{d}w +
    \frac1{2\pi i} \int_{1-i\pi}^{1+i3\pi/2}
    \frac{e^{w-z}}{1-e^{w-z}} w^{\alpha-2} \, \mathrm{d}w \\
    & \qquad {} + \frac1{2\pi i}\int_{1+i3\pi/2}^{-\infty + i3\pi/2}
    \frac{e^{w-z}}{1-e^{w-z}} w^{\alpha-2} \, \mathrm{d}w \\
    & =: z^{\alpha-2} + G(\alpha,z).
  \end{align*}
  \end{small}
  A routine calculation verifies that $ G $ is continuous on
  \[
    [0,1] \times \{
      \xi \in \mathbb C:\
      -L \leqslant \Re \xi \leqslant 0,
      0 \leqslant \Im \xi \leqslant \pi
    \},
  \]
  and so
  \[
    \sup_{0 < \alpha < 1} \quad \sup_{
      \substack{
        -L \leqslant \Re z \leqslant 0 \\
        0 \leqslant \Im z \leqslant \pi
      }
    } \snm{G(\alpha,z)} = C_L.
  \]
  It follows that
  \[ 
    \sup_{0 < \alpha < 1} \sup_{
      \substack{
        z \in \Sigma_\pi \\
        -L \leqslant \Re z \leqslant 0 \\
        0 \leqslant \Im z \leqslant \pi
      }
    } \Snm{\widehat b(z) - z^{\alpha-2}} = C_L.
  \]
  Similarly,
  \[
    \sup_{0 < \alpha < 1} \sup_{
      \substack{
        z \in \Sigma_\pi \\
        -L \leqslant \Re z \leqslant 0 \\
        -\pi \leqslant \Im z \leqslant 0
      }
    } \Snm{\widehat b(z) - z^{\alpha-2}} = C_L.
  \]
  Combining the above two estimates proves \eqref{eq:wtb} and hence this lemma.
\end{proof}

\begin{lemma}
  \label{lem:psi}
  For any $ 0 < \delta < \pi $ and $ L > 0 $, we have
  \begin{align}
    \inf_{0 < \alpha < 1} \quad \inf_{
      \delta \leqslant y \leqslant \pi
    } \Re \big( e^{-iy}\psi(iy) \big) & = C_\delta,
    \label{eq:psi-1} \\
    \sup_{0 < \alpha < 1} \quad \sup_{
      \substack{
        -L \leqslant \Re z \leqslant 0 \\
        \delta \leqslant \Im z  \leqslant \pi
      }
    }\Snm{
      \frac{\mathrm{d}}{\mathrm{d}z} (e^{-z}\psi(z))
    } & = C_{\delta,L}.
    \label{eq:psi-2}
  \end{align}
\end{lemma}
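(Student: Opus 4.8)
The plan is to establish the two estimates separately: \eqref{eq:psi-1} from the explicit integral \eqref{eq:re-psi}, and \eqref{eq:psi-2} from the decomposition of $\widehat b$ produced in the proof of \cref{lem:wtb}. For \eqref{eq:psi-1} I would set $z=iy$ (so $\Re z=0$) in \eqref{eq:re-psi} and simplify the bracket in the numerator: a direct computation gives $1+e^{-s}-e^{-s}\cos y-\cos y=(1+e^{-s})(1-\cos y)$, so that, using $(1-e^{-s})(1+e^{-s})=1-e^{-2s}$ and $\sin(\pi(1-\alpha))=\sin(\pi\alpha)$,
\[
  \Re\big(e^{-iy}\psi(iy)\big)=\frac{\sin(\pi\alpha)}{\pi}(1-\cos y)\int_0^\infty\frac{s^{\alpha-2}(1-e^{-2s})}{1-2e^{-s}\cos y+e^{-2s}}\,\mathrm{d}s.
\]
The integrand is positive on $y\in[\delta,\pi]$ since the denominator equals $|1-e^{-s}e^{iy}|^2>0$, and because $\cos y\geqslant-1$ this denominator is at most $(1+e^{-s})^2$; hence the integral is bounded below by $\int_0^\infty s^{\alpha-2}\tanh(s/2)\,\mathrm{d}s$. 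Combined with $1-\cos y\geqslant1-\cos\delta$, this reduces \eqref{eq:psi-1} to proving that $g(\alpha):=\frac{\sin(\pi\alpha)}{\pi}\int_0^\infty s^{\alpha-2}\tanh(s/2)\,\mathrm{d}s$ is bounded below by a positive constant, uniformly in $\alpha\in(0,1)$.

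The crude estimate $g(\alpha)\geqslant c_0\sin(\pi\alpha)$ obtained by restricting the integral to a fixed subinterval is useless near the endpoints, so the crux is the interplay between the vanishing factor $\sin(\pi\alpha)$ and the divergence of the integral as $\alpha\to0^+$ and $\alpha\to1^-$. I would show that $g$ extends to a continuous, strictly positive function on $[0,1]$: continuity on $(0,1)$ follows by dominated convergence, and the endpoint limits are computed from $\tanh(s/2)\sim s/2$ as $s\to0$ and $\tanh(s/2)\to1$ as $s\to\infty$, which give $g(0^+)=\tfrac12$ and $g(1^-)=1$ (the prefactor $\sin(\pi\alpha)$ being exactly cancelled by the $\alpha^{-1}$- and $(1-\alpha)^{-1}$-type blow-up of the integral). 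Positivity on the compact interval then gives $\inf_{0<\alpha<1}g(\alpha)=\min_{[0,1]}g>0$, hence \eqref{eq:psi-1} with $C_\delta=(1-\cos\delta)\min_{[0,1]}g$. This uniform-in-$\alpha$ lower bound, and in particular the endpoint analysis, is the step I expect to be the main obstacle.

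For \eqref{eq:psi-2} I would use the algebraic identity $e^{-z}\psi(z)=e^{-z}(e^z-1)^2\widehat b(z)=2(\cosh z-1)\widehat b(z)$, whence
\[
  \frac{\mathrm{d}}{\mathrm{d}z}\big(e^{-z}\psi(z)\big)=2\sinh z\,\widehat b(z)+2(\cosh z-1)\,\widehat b'(z).
\]
On $R:=\{-L\leqslant\Re z\leqslant0,\ \delta\leqslant\Im z\leqslant\pi\}$ we have $|z|\leqslant\sqrt{L^2+\pi^2}$, so $\sinh z$ and $\cosh z-1$ are bounded in terms of $L$; it therefore suffices to bound $\widehat b$ and $\widehat b'$ uniformly in $\alpha$ on $R$. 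Since $\Im z\geqslant\delta$ forces $|z|\geqslant\delta$ and $|z^{\alpha-2}|=|z|^{\alpha-2}\leqslant\delta^{\alpha-2}\leqslant\max(\delta^{-1},\delta^{-2})$, \cref{lem:wtb} already gives $|\widehat b(z)|\leqslant|z^{\alpha-2}|+C_L\leqslant C_{\delta,L}$.

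It remains to bound $\widehat b'$, for which I would differentiate the decomposition $\widehat b(z)=z^{\alpha-2}+G(\alpha,z)$ from the proof of \cref{lem:wtb}. The term $(\alpha-2)z^{\alpha-3}$ is controlled using $|z|\geqslant\delta$ and $|\alpha-2|\leqslant2$, while $\partial_z G$ is obtained by differentiating the contour integral defining $G$ under the integral sign. Because $R$ stays bounded away from the origin and from the poles $w=z+2k\pi i$ ($k\neq0$), the differentiated integrand is dominated uniformly for $\alpha\in[0,1]$ by an integrable function (decaying like $e^{\Re w}|w|^{-1}$ along the contour), so $\partial_z G$ is continuous, hence bounded, on $[0,1]\times\overline R$. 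Collecting these bounds yields \eqref{eq:psi-2}. The delicate point here is keeping the contour representation of $G$ and its $z$-derivative valid up to the top edge $\Im z=\pi$: the pole $w=z-2\pi i$ then reaches the lower horizontal part of the contour used in \cref{lem:wtb}, so that contour must be lowered slightly to keep the pole off the path.
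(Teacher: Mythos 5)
Your treatment of \eqref{eq:psi-1} is correct and is essentially the paper's own argument: the paper likewise sets $x=0$ in \eqref{eq:re-psi}, bounds $1-\cos y \geqslant 1-\cos\delta$ and the denominator by $1+2e^{-s}+e^{-2s}$, and then certifies the uniform-in-$\alpha$ positivity by splitting the integral at $s=1$ and proving the explicit bounds $\int_0^1 \geqslant e^{-2}/(2\alpha)$ and $\int_1^\infty \geqslant (1-e^{-2})/(4(1-\alpha))$, so that the blow-up of the integral exactly offsets the vanishing of $\sin(\pi(1-\alpha))$. Your compactness packaging (continuous extension of $g$ to $[0,1]$ with $g(0^+)=\tfrac12$, $g(1^-)=1$, both of which are computed correctly) is a valid alternative to the paper's explicit elementary constants; the underlying cancellation is identical.

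For \eqref{eq:psi-2} you genuinely diverge from the paper in how $\widehat b'$ is controlled, and your proposed repair of the contour is in the wrong direction. The paper avoids the contour integral altogether: for $\delta \leqslant \Im z \leqslant \pi$ it converts \eqref{eq:wtb-int} by the residue theorem into the absolutely convergent series
\begin{equation*}
  \widehat b(z) = \sum_{k=-\infty}^{\infty} (z+2k\pi i)^{\alpha-2},
  \qquad
  \widehat b'(z) = (\alpha-2)\sum_{k=-\infty}^{\infty} (z+2k\pi i)^{\alpha-3},
\end{equation*}
which is bounded uniformly in $\alpha$ on the region because $\snm{z}\geqslant\delta$ and $\snm{z+2k\pi i}\geqslant(2\snm{k}-1)\pi$ for $k\neq 0$; no pole ever approaches an integration path. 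Your plan of differentiating $G$ under the integral sign does run into the difficulty you flag, but lowering the bottom edge of the contour to $\Im w = -\pi-\eta$ does not fix it: for $\Im z \in (\pi-\eta,\pi]$ the pole $w = z-2\pi i$ then lies \emph{inside} the contour, so the identity $\widehat b = z^{\alpha-2}+G$ acquires an extra residue $(z-2\pi i)^{\alpha-2}$ that your decomposition omits, and worse, for $\Im z = \pi-\eta$ (now an interior point of your region) the pole sits exactly \emph{on} the lowered path, so the problem has merely been relocated, not removed. The clean single-contour repair is the opposite move: raise the bottom edge to, say, $\Im w = -\pi/2$. Then every pole $z+2k\pi i$, $k\in\mathbb Z$, stays at a distance from the path bounded below by a positive constant independent of $z$ in the region and of $\alpha\in(0,1)$, the decomposition $\widehat b = z^{\alpha-2}+G$ is unchanged, and differentiation under the integral sign gives the required uniform bound. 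With that correction (or by adopting the paper's series representation), your argument for \eqref{eq:psi-2} goes through.
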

\begin{proof}
  For any $ \delta \leqslant y \leqslant \pi $, we have, by \eqref{eq:re-psi}
  with $ z= 0+i y $,
  \begin{align*}
    & \Re \big ( e^{-iy} \psi (iy) \big )
    = \frac{\sin (\pi (1- \alpha))}{\pi} \int_{0}^{\infty}
    \frac{
      s^{\alpha-2} (1- e^{-2s}) (1- \cos y)
    }{1- 2 e^{-s} \cos y + e^{-2 s}} \, \mathrm{d}s  \\
    & >  \frac{\sin(\pi(1-\alpha))}\pi (1-\cos\delta)
    \int_0^\infty \frac{s^{\alpha-2}(1-e^{-2s})}{
      1+2e^{-s}+e^{-2s}
    } \, \mathrm{d}s, \\
    &=  \frac{\sin(\pi(1-\alpha))}\pi (1-\cos\delta)
    \Big [
      \int_0^1 \frac{s^{\alpha-2}(1-e^{-2s})}{
        1+2e^{-s}+e^{-2s}
      } \, \mathrm{d}s
      +
      \int_1^\infty \frac{s^{\alpha-2}(1-e^{-2s})}{
        1+2e^{-s}+e^{-2s}
      } \, \mathrm{d}s
      \Big ].
    \end{align*}
    In view of the two simple estimates
    \begin{align*}
      \int_0^1 \frac{s^{\alpha-2}(1-e^{-2s})}{
        1+2e^{-s}+e^{-2s}
      } \, \mathrm{d}s
      & > \int_0^1 \frac{s^{\alpha-2}(e^{-2s} 2s)}{4 } \, \mathrm{d}s
      = \int_0^1 \frac{s^{\alpha-1}(e^{-2s})}{2 } \, \mathrm{d}s \\
      & > \int_0^1 \frac{s^{\alpha-1}(e^{-2})}{2
    } \, \mathrm{d}s
    = \frac{e^{-2}}{2 \alpha}
  \end{align*}
  and
  \begin{align*}
    &\int_1^\infty \frac{s^{\alpha-2}(1-e^{-2s})}{
      1+2e^{-s}+e^{-2s}
    } \, \mathrm{d}s
    > \int_1^{\infty} s^{\alpha-2} \frac{1-e^{-2}}{4} \, \mathrm{d}s =
    \frac{1-e^{-2}}{4(1-\alpha)},
  \end{align*}
  we then obtain, for any $ \delta \leqslant y \leqslant \pi $,
  \begin{align*}
    \Re \big( e^{-iy} \psi(iy) \big)
    \geqslant \frac{\sin(\pi(1-\alpha))}\pi
    (1-\cos\delta) \Big(
      \frac{e^{-2}}{2\alpha} + \frac{1-e^{-2}}{4(1-\alpha))}
    \Big) \geqslant C_\delta.
  \end{align*}
  This implies inequality \eqref{eq:psi-1}.

  Now let us prove \eqref{eq:psi-2}. For any $ z \in \mathbb C $ satisfying $
  \delta \leqslant \Im z \leqslant \pi $, using the residue theorem yields, by
  \eqref{eq:wtb-int}, that
  \begin{equation}
    \label{eq:wtb-series}
    \widehat b(z) = \sum_{k = -\infty}^\infty
    (z+2k\pi i)^{\alpha-2},
  \end{equation}
  and hence
  \[
    \widehat b'(z) = (\alpha-2)\sum_{k = -\infty}^\infty
    (z+2k\pi i)^{\alpha-3}.
  \]
  A simple calculation then gives
  \[
    \sup_{0 < \alpha < 1} \quad \sup_{
      \substack{
        -L \leqslant \Re z \leqslant 0 \\
        \delta \leqslant \Im z \leqslant \pi
      }
    } \snm{e^{-z}(e^z-1)^2 \widehat b'(z)} = C_{\delta,L}.
  \]
  In addition, Lemma \ref{lem:wtb} implies
  \[
    \sup_{0 < \alpha < 1} \quad \sup_{
      \substack{
        -L \leqslant \Re z \leqslant 0 \\
        \delta \leqslant \Im z \leqslant \pi
      }
    }\snm{(e^z-e^{-z})\widehat b(z)} = C_{\delta,L}.
  \]
  Consequently, \eqref{eq:psi-2} follows from the equality
  \[
    \frac{\mathrm{d}}{\mathrm{d}z} (e^{-z}\psi(z)) =
    (e^z-e^{-z})\widehat b(z) +
    e^{-z}(e^z-1)^2 \widehat b'(z),
    \quad z \in \Sigma_\pi.
  \]
  This completes the proof.
\end{proof}

\begin{lemma} 
  \label{lem:psi-esti}
  Assume that $ \pi/2 < \theta_0 < \pi $. Then there exists $ \pi/2 < \theta^*
  \leqslant \theta_0 $ depending only on $ \theta_0 $ such that
  \begin{equation}
    \label{eq:psi_sector}
    e^{-z} \psi(z) \in \Sigma_{\theta_0}
    \quad\text{ for all } z \in \Sigma_{\theta^*}
    \text{ with } -\pi \leqslant \Im z \leqslant \pi
  \end{equation}
  and
  \begin{equation}
    \label{eq:psi-esti}
    \snm{e^{-z}\psi(z)} \geqslant C_{\theta_0}
    \snm{z}^\alpha \quad \text{for all }
    z \in \Upsilon_{\theta^*} \setminus \{0\}.
  \end{equation}
\end{lemma}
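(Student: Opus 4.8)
The plan is to establish both claims by decomposing the admissible set $\{z\in\Sigma_{\theta^*}:\ -\pi\leqslant\Im z\leqslant\pi\}$ into three regions, and to fix the auxiliary constants in a definite order (first $\epsilon$, then $r_0$, then $\delta$, and only finally $\theta^*$) so that $\theta^*$ depends on $\theta_0$ alone and every bound is uniform in $\alpha\in(0,1)$. Since the $b_j$ are real, $\widehat b(\bar z)=\overline{\widehat b(z)}$ and hence $e^{-\bar z}\psi(\bar z)=\overline{e^{-z}\psi(z)}$; both $\snm{\cdot}$ and $\arg$ are therefore invariant (up to reflection) under conjugation, so it suffices to treat $\Im z\geqslant 0$ and pass to conjugates.

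First I would dispose of the right half-plane $\Re z\geqslant 0$ (with $z\neq 0$) using the representation \eqref{eq:re-psi}. Writing $z=x+iy$ with $x\geqslant0$, the denominator there equals $\snm{1-e^{-(x+s)}e^{iy}}^2>0$, while, since $\cos y\leqslant 1$, the bracketed numerator factor is bounded below by $(1-e^{-x})(1-e^{-x-s})\geqslant0$; together with $\sin(\pi(1-\alpha))>0$ this gives $\Re(e^{-z}\psi(z))>0$ on the whole closed right half-plane minus the origin. Hence $\snm{\arg(e^{-z}\psi(z))}<\pi/2<\theta_0$ there, which in particular handles the unbounded direction $\Re z\to+\infty$ that neither \cref{lem:wtb} nor \cref{lem:psi} reaches.

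Next, near the origin I would use the factorization $e^{-z}\psi(z)=h(z)\,z^\alpha+R(z)$ with $h(z):=e^{-z}\big((e^z-1)/z\big)^2$ (analytic, $h(0)=1$) and $R(z):=e^{-z}(e^z-1)^2\big(\widehat b(z)-z^{\alpha-2}\big)$. On $\Re z<0$ one has $\snm{e^{-z}(e^z-1)^2}\leqslant C\snm{z}^2$ and, by \cref{lem:wtb} with $L=\pi$, $\snm{\widehat b(z)-z^{\alpha-2}}\leqslant C_\pi$, so $\snm{R(z)}\leqslant C\snm{z}^2$ is dominated by $h(z)z^\alpha$ once $\snm{z}\leqslant r_0$; the ratio is $O(\snm{z}^{2-\alpha})$ and $\sup_{0<\alpha<1}r_0^{2-\alpha}=r_0$, so this smallness is uniform in $\alpha$. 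Writing $e^{-z}\psi(z)=h(z)z^\alpha(1+\rho(z))$ with $\snm{\rho}\leqslant1/2$, its argument is $\alpha\arg z+\arg h(z)+\arg(1+\rho)$; choosing $r_0$ small makes the last two terms total at most $\epsilon:=(\theta_0-\pi/2)/4$. With $\theta^*\leqslant(\theta_0+\pi/2)/2=\theta_0-2\epsilon$ this yields $\snm{\arg(e^{-z}\psi(z))}\leqslant\alpha\theta^*+\epsilon\leqslant\theta_0-\epsilon<\theta_0$ for $z\in\Sigma_{\theta^*}$ with $\snm{z}\leqslant r_0$, and simultaneously $\snm{e^{-z}\psi(z)}\geqslant\tfrac14\snm{z}^\alpha$, which is \eqref{eq:psi-esti} on the part of $\Upsilon_{\theta^*}$ near the origin.

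Finally, on the remaining left piece $\{\Re z<0,\ \snm{z}>r_0,\ z\in\Sigma_{\theta^*},\ \snm{\Im z}\leqslant\pi\}$ the geometry forces $\Im z>r_0\sin\theta^*\geqslant r_0/2=:\delta$ (taking $\theta^*\leqslant 3\pi/4$) and $\snm{\Re z}<\pi/\snm{\tan\theta^*}=:L_0$, with $L_0\to0$ as $\theta^*\to(\pi/2)+$. Here I would invoke \cref{lem:psi}: integrating its bounded derivative along the horizontal segment from $iy$ to $z$ (which lies in $-\pi\leqslant\Re\leqslant0$, $\delta\leqslant\Im\leqslant\pi$ once $L_0\leqslant\pi$) gives $\Re(e^{-z}\psi(z))\geqslant C_\delta-C_{\delta,\pi}L_0$, so taking $\theta^*$ close enough to $\pi/2$ that $L_0\leqslant C_\delta/(2C_{\delta,\pi})$ forces $\Re(e^{-z}\psi(z))\geqslant C_\delta/2>0$; this yields the sector inclusion on this piece and, since $\snm{z}$ is bounded on $\Upsilon_{\theta^*}$, the remaining case of \eqref{eq:psi-esti}. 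The hard part is the near-origin left sliver treated above: there $e^{-z}\psi(z)$ behaves like $z^\alpha$ with $\arg z$ exceeding $\pi/2$, so its argument can approach $\pi/2$ as $\alpha\to1$, and keeping it strictly below $\theta_0$ needs both the slack $\theta_0>\pi/2$ and the $\alpha$-uniform remainder bound; arranging $\epsilon,r_0,\delta$ before $\theta^*$ so that $\theta^*$ depends only on $\theta_0$ is the delicate bookkeeping.
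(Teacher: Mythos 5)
Your proof is correct and takes essentially the same route as the paper's: the same decomposition into the right half-plane (positivity of $\Re(e^{-z}\psi(z))$ via \eqref{eq:re-psi}), the near-origin left sliver (the factorization $e^{-z}\psi(z)=z^\alpha(1+\text{small})$ built from \cref{lem:wtb}, giving both the sector inclusion and the lower bound $C_{\theta_0}\snm{z}^\alpha$ uniformly in $\alpha$), and the remaining left strip (\cref{lem:psi} to push positivity of the real part slightly into $\Re z<0$), finished by conjugation symmetry. Your write-up merely makes the paper's implicit constant-choosing explicit (the functions $h,\rho$, the integration of \eqref{eq:psi-2} along horizontal segments, and the ordering $\epsilon,r_0,\delta,\theta^*$), which matches the paper's choice $\theta^*=\pi/2+\arctan(\epsilon_1/\pi)$.
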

\begin{proof}
  Step 1. By \eqref{eq:re-psi}, a simple calculation gives
  \[
    \Re \big( e^{-z}\psi(z) \big) > 0
    \text{ for all } z \in D_1,
  \]
  so that
  \begin{equation}
    \label{eq:bj-1}
    e^{-z}\psi(z) \in \Sigma_{\pi/2} \quad\text{for all } z \in D_{1},
  \end{equation}
  where
  \[
    D_{1}= \{
      z \in \mathbb{C}: \Re z \geqslant 0, \, 0 \leqslant \Im z
      \leqslant \pi, \, z \ne 0
    \}.
  \]

  Step 2. From \eqref{eq:psi-def} and \eqref{eq:wtb} we conclude that there
  exists a continuous function $ G $ on $ (0,1) \times D_2 $, such that
  \begin{equation}
    \label{eq:64}
    e^{-z}\psi(z) = z^\alpha\big(1 + z G(\alpha,z)\big)
    \quad \forall z \in D_2
  \end{equation}
  and that
  \[
    \sup_{0 < \alpha <1} \sup_{
      z \in D_2
    } \snm{G(\alpha,z)} = C_{\theta_0},
  \]
  where
  \[
    D_2 := \{
      \xi \in \mathbb C \setminus \{0\}:
      \ \pi/2 \leqslant \mbox{Arg}(\xi) \leqslant \theta_0,\,
      0 < \Im \xi \leqslant \pi
    \}.
  \]
  Hence, there exists $ 0 < \epsilon_0 < \pi $, depending only on $ \theta_0
  $, such that
  \begin{align*}
    & \Snm{
      \mbox{Arg}(1+zG(\alpha,z))
    } \leqslant (\theta_0-\pi/2)/2 \quad \text{ and } \quad
    \snm{e^{-z}\psi(z)} \geqslant C_{\theta_0}
    \snm{z}^\alpha \\
    & \text{ for all } z \in \Sigma_{\theta_0} \setminus
    \Sigma_{\pi/2} \text{ with }
    0 < \Im z \leqslant \epsilon_0.
  \end{align*}
  Since
  \begin{align*}
    & \mbox{Arg} \big ( e^{-z}  \psi (z)  \big ) =
    \mbox{Arg} \big ( z^{\alpha}  ( 1+ zG(\alpha,z)) \big )
    \quad \text{(by \eqref{eq:64})} \\
    ={} &
    \alpha \mbox{Arg}(z) + \mbox{Arg} \big ( 1 + zG(\alpha,z) \big ),
  \end{align*}
  it follows that
  \begin{equation}
    \label{eq:bj-2}
    \begin{aligned}
      & e^{-z}\psi(z) \in \Sigma_{\theta_0} \text{ and }
      \snm{e^{-z}\psi(z)} \geqslant C_{\theta_0} \snm{z}^\alpha \\
      & \text{for all} \,
      z \in \Sigma_{(\theta_0+\pi/2)/2} \setminus \Sigma_{\pi/2} \, \text{ with }
      0 < \Im z \leqslant \epsilon_0.
    \end{aligned}
  \end{equation}

  Step 3. Note that $ \epsilon_0 $ is a constant depending only on $ \theta_0 $.
  By \eqref{eq:psi-1} we have
  \[ 
    \inf_{0 < \alpha < 1} \, \inf_{
      \substack{
        \Re z =0  \\
        \epsilon_0 \leqslant \Im z \leqslant \pi
      }
    } \Re \big ( e^{-z}\psi(z)  \big ) = C_{\theta_0}.
  \]
  From \eqref{eq:psi-2} we then conclude that there exists $ 0 < \epsilon_1 <
  \pi $,
  depending only on $ \theta_0 $, such that
  \[ 
    \inf_{0 < \alpha < 1} \, \inf_{
      \substack{
        -\epsilon_1 \leqslant \Re z \leqslant 0 \\
        \epsilon_0 \leqslant \Im z \leqslant \pi
      }
    } \Re \big ( e^{-z}\psi(z)  \big ) = C_{\theta_0} > 0.
  \]
  It follows that
  \begin{equation}
    \label{eq:bj-31}
    \begin{aligned}
      & e^{-z} \psi(z) \in \Sigma_{\pi/2} \text{ and }
      \snm{e^{-z}\psi(z)} \geqslant C_{\theta_0}  \text{ for all } \\
      & z \in \Sigma_{(\theta_0+\pi/2)/2}
      \setminus \Sigma_{\pi/2} \, \text{ with } -\epsilon_1 \leqslant \Re z
      \leqslant 0 \text{ and }
      \epsilon_0 \leqslant \Im z \leqslant \pi.
    \end{aligned}
  \end{equation}
  Letting $ \theta^* := \pi/2 + \arctan(\epsilon_1/\pi) $, by
  \eqref{eq:bj-1}, \eqref{eq:bj-2} and \eqref{eq:bj-31} we obtain that
  \begin{equation}
    \label{eq:zq-1}
    e^{-z} \psi(z) \in \Sigma_{\theta_0} \quad
    \text{for all } z \in \Sigma_{\theta^*}
    \text{ with } 0 \leqslant \Im z \leqslant \pi
  \end{equation}
  and that
  \begin{equation}
    \label{eq:zq-2}
    \snm{e^{-z}\psi(z)} \geqslant C_{\theta_0}
    \snm{z}^\alpha \text{ for all } z \in \Upsilon_{\theta^*}
    \,\text{ with }\, 0 < \Im z \leqslant \pi.
  \end{equation}


  Step 4. By the fact that
  \[
    \overline{e^{-z}\psi(z)} =
    e^{-\overline{z}} \psi(\overline z)
    \quad\text{ for all } z \in \Sigma_\pi,
  \]
  using \eqref{eq:zq-1} and \eqref{eq:zq-2} proves \eqref{eq:psi_sector} and
  \eqref{eq:psi-esti}, respectively. This completes the proof.
\end{proof}

By \eqref{eq:psi-def} and Lemma \ref{lem:wtb}, a routine calculation gives the following lemma.
\begin{lemma} 
  Assume that $ \pi/2 < \theta < \pi $. Then
  \begin{equation} 
    \label{eq:129}
    \snm{\psi(z) - z^\alpha} \leqslant
    C_\theta \snm{z}^{\alpha+1}
  \end{equation}
  for all $ z \in \Upsilon_\theta \setminus \{0\} $.
\end{lemma}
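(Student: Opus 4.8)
The plan is to exploit the fact that $\Upsilon_\theta$ is a \emph{bounded} subset of the closed left half-plane, and then to combine the elementary near-origin expansion of $(e^z-1)^2$ with the uniform remainder estimate supplied by Lemma \ref{lem:wtb}. First I would record two geometric facts about $\Upsilon_\theta$ when $\pi/2 < \theta < \pi$. Writing any nonzero $z \in \Upsilon_\theta$ as $z = r e^{\pm i\theta}$ with $r = \snm{z} > 0$, the constraint $-\pi \leqslant \Im z \leqslant \pi$ in \eqref{eq:Upsilon1-def} forces $r\sin\theta \leqslant \pi$, hence $\snm{z} \leqslant \pi/\sin\theta =: M_\theta$; moreover $\Re z = r\cos\theta \leqslant 0$ because $\cos\theta < 0$. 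Thus $\Upsilon_\theta \setminus \{0\}$ is contained in the region
\[
  \{ z \in \Sigma_\pi:\ -L_\theta \leqslant \Re z \leqslant 0,\ -\pi \leqslant \Im z \leqslant \pi \},
  \qquad L_\theta := M_\theta \snm{\cos\theta},
\]
which is precisely where Lemma \ref{lem:wtb} applies.

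Next I would split $\psi(z) - z^\alpha$ using the definition \eqref{eq:psi-def}. By Lemma \ref{lem:wtb} (with $L = L_\theta$) we may write $\widehat b(z) = z^{\alpha-2} + R(\alpha,z)$ where $\snm{R(\alpha,z)} \leqslant C_\theta$ uniformly in $\alpha \in (0,1)$ and $z \in \Upsilon_\theta$. On the other hand, the entire function $(e^z-1)^2/z^2 = 1 + z h(z)$, where $h$ is independent of $\alpha$ and bounded on the compact disc $\{\snm{z} \leqslant M_\theta\}$. Substituting both into \eqref{eq:psi-def} gives
\[
  \psi(z) - z^\alpha
  = (e^z-1)^2 z^{\alpha-2} - z^\alpha + (e^z-1)^2 R(\alpha,z)
  = z^{\alpha+1} h(z) + (e^z-1)^2 R(\alpha,z),
\]
whence $\snm{\psi(z) - z^\alpha} \leqslant C_\theta \snm{z}^{\alpha+1} + C_\theta \snm{z}^2$ on $\Upsilon_\theta \setminus \{0\}$.

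Finally, to upgrade the $\snm{z}^2$ term to the desired $\snm{z}^{\alpha+1}$ I would use the elementary inequality $\snm{z}^2 = \snm{z}^{\alpha+1}\,\snm{z}^{1-\alpha}$, noting that on $\Upsilon_\theta$ we have $\snm{z}^{1-\alpha} \leqslant \max\{1, M_\theta^{1-\alpha}\} \leqslant \max\{1, M_\theta\}$ uniformly in $\alpha \in (0,1)$; combining the two terms then yields \eqref{eq:129}. The only point requiring care—and the reason the statement carries an $\alpha$-free constant $C_\theta$—is uniformity in $\alpha$. I expect this bookkeeping of $\alpha$-uniform constants, rather than any single estimate, to be the main thing to track: it rests on the $\alpha$-uniform remainder bound of Lemma \ref{lem:wtb}, on the $\alpha$-uniform control of $\snm{z}^{1-\alpha}$ afforded by the boundedness of $\Upsilon_\theta$, and on the fact that $h$ is manifestly independent of $\alpha$. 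None of these steps involves a delicate calculation, so the whole argument should be short once the boundedness of $\Upsilon_\theta$ is observed.
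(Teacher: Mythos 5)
Your proof is correct and follows exactly the route the paper intends: the paper gives no detailed argument, stating only that the lemma follows from \eqref{eq:psi-def} and Lemma \ref{lem:wtb} by a routine calculation, and your write-up is precisely that calculation (the containment of $\Upsilon_\theta\setminus\{0\}$ in the strip where Lemma \ref{lem:wtb} applies, the expansion $(e^z-1)^2 = z^2(1+zh(z))$, and the absorption of the $\snm{z}^2$ remainder using the boundedness of $\Upsilon_\theta$), with the $\alpha$-uniformity of $C_\theta$ correctly tracked throughout.
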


\begin{remark}
In   Lemma \ref{lem:psi-esti}, we prove that for any given  $\theta_{0} \in (\pi/2, \pi)$, we can show that  $e^{-z} \psi (z) \in \Sigma_{\theta_{0}}$ for $z \in \Sigma_{\theta^{*}}$ with some $\pi/2 < \theta^{*} \leqslant \theta_{0}$. Therefore our error estimates hold  for any elliptic operator $\mathcal{A}$ where  the resolvent set of $\mathcal{A}$  lies in $\Sigma_{\theta_{0}}$. The techniques used in the proof of Lemma \ref{lem:psi-esti} are new and may be extended to consider the error estimates for the higher order L-type schemes.   Let us recall some available approach in literature for proving  Lemma \ref{lem:psi-esti}.   In Jin et al. \cite{Jin2016-L1} the authors use the following steps  to show $e^{-z} \psi (z) \in \Sigma_{\theta_{0}}$:

Step 1.  Let $z \in \{ z: \mbox{Arg} (z) = \theta^{*}= \pi/2 \}$ and prove that  $e^{-z} \psi (z) \in \Sigma_{\theta_{0}}$ for some suitable $\theta_{0} \in (\pi/2, \pi)$.

Step 2. By the continuity of  $e^{-z} \psi (z)$ with respect to $\theta^{*}$, one may claim  that $e^{-z} \psi (z) \in \Sigma_{\theta_{0}}$ also for $\theta^{*} \in (\pi/2, \pi)$ for $\theta^{*}$ sufficiently  close to $\pi/2$.

By using this approach, Jin et al. \cite{Jin2016-L1} show that $\theta_{0} = 3 \pi/4 - \epsilon$, with $\epsilon >0$,   which implies that this approach  do not work for the elliptic operator $\mathcal{A}$ where the resolvent set of $\mathcal{A}$  lies in $\Sigma_{\theta_{0}}$ with $\theta_{0} < 3 \pi/4$.  It seems also very difficult to prove the similar results as in Lemma \ref{lem:psi-esti} for the higher order L-type scheme by using the approach in \cite{Jin2016-L1}. Therefore the new techniques developed in the proof of  Lemma \ref{lem:psi-esti} may open a door to consider the numerical analysis for high order L-type schemes for solving time fractional partial differential equations.
\end{remark}


\subsection{Proof of  Theorem \ref{thm:conv-Stau}}

By Lemma \ref{lem:psi-esti}, there exists $ \pi/2 < \omega^* \leqslant \omega_0 $,
depending only on $ \omega_0 $, such that
\begin{equation}
  \label{eq:0}
  e^{-z} \psi(z) \in \Sigma_{\omega_0}
  \text{
    for all $ z \in \Sigma_{\omega^*} $
    with $ -\pi \leqslant \operatorname{Im} z \leqslant \pi $
  }
\end{equation}
and that
\begin{equation}
  \label{eq:psi>}
  \snm{e^{-z}\psi(z)} \geqslant C_{\omega_0}
  \snm{z}^\alpha \quad \text{for all }
  z \in \Upsilon_{\omega^*} \setminus \{0\}.
\end{equation}
Define
\begin{equation}
  \label{eq:calE-def}
  \mathcal E(t) := \tau^{-1} \mathcal E_{\lfloor t/\tau \rfloor},
  \quad t > 0,
\end{equation}
where $ \lfloor \cdot \rfloor $ is the floor function and
\begin{equation}
  \label{eq:calEj}
  \mathcal E_j := \frac1{2\pi i} \int_{\Upsilon_{\omega^*}}
  e^{jz} R(\tau^{-\alpha}e^{-z}\psi(z), \mathcal A) \, \mathrm{d}z,
  \quad j \in \mathbb N.
\end{equation}
Note that (\ref{eq:rho(A)}) and \eqref{eq:0} guarantee that the above $ \mathcal
E_j $ is well defined, and we recall that $ \psi $ is defined by
\eqref{eq:psi-def}.

\begin{lemma} 
  For any $ g \in L^1(0,T;L^2(\Omega)) $, we have
  \begin{equation}
    \label{eq:Stau-g}
    S_{\tau,j}g = \int_0^{t_j} \mathcal E(t_j-t) g(t) \, \mathrm{d}t
    \quad \forall 1 \leqslant j \leqslant J.
  \end{equation}
\end{lemma}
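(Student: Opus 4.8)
The plan is to recognize the L1 scheme \eqref{eq:L1} as a discrete convolution equation whose symbol is exactly $e^{-z}\psi(z)$, to solve it by a generating‑function (discrete Laplace transform) computation, and to invert the transform by a contour integral that reproduces \eqref{eq:calEj}. First I would rewrite the scheme in convolution form: setting $W_0:=0$, $a_0:=b_1$ and $a_m:=b_{m+1}-2b_m+b_{m-1}$ for $m\geq1$, equation \eqref{eq:L1} becomes $\sum_{m=0}^{k}a_m W_{k-m}-\tau^\alpha\mathcal A W_k=G_k$ with $G_k:=\tau^{\alpha-1}\int_{t_{k-1}}^{t_k}g(t)\,\mathrm dt$ for $1\le k\le J$ (the $m=k$ term drops out since $W_0=0$). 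As the system is causal, I extend it to all $k\ge1$ with $g$ set to $0$ on $(T,\infty)$, which leaves $W_1,\dots,W_J$ unchanged. The key algebraic fact, proved by a short manipulation of $\widehat b$ using $b_0=0$, is that the symbol of $\{a_m\}$ is
\[
  \sum_{m=0}^\infty a_m e^{-mz}=(e^z-2+e^{-z})\,\widehat b(z)=e^{-z}(e^z-1)^2\,\widehat b(z)=e^{-z}\psi(z),
\]
by the definition \eqref{eq:psi-def}; the series converges for $\Re z\ge0$ since $a_m$ is a second difference of $m^{1-\alpha}$ and hence $\sum_m\snm{a_m}<\infty$.

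Next I would introduce the generating functions $\widehat W(\zeta):=\sum_{k\ge1}W_k\zeta^k$, $\widehat G(\zeta):=\sum_{k\ge1}G_k\zeta^k$ and $A(\zeta):=\sum_{m\ge0}a_m\zeta^m$. Since the discrete convolution in the scheme is the Cauchy product of $\{a_m\}$ and $\{W_k\}$, summing the $k$-th equation against $\zeta^k$ gives $(A(\zeta)-\tau^\alpha\mathcal A)\widehat W(\zeta)=\widehat G(\zeta)$, whence
\[
  \widehat W(\zeta)=\tau^{-\alpha}R\big(\tau^{-\alpha}A(\zeta),\mathcal A\big)\,\widehat G(\zeta),\qquad \snm{\zeta}<1.
\]
Writing $\zeta=e^{-z}$ so that $A(\zeta)=e^{-z}\psi(z)$, \eqref{eq:0} guarantees $\tau^{-\alpha}A(\zeta)\in\Sigma_{\omega_0}\subset\rho(\mathcal A)$ for $\Re z>0$, so the resolvent is bounded there and the inversion is legitimate; convergence of $\widehat G$ for $\snm{\zeta}\le1$ follows from $\sum_k\nm{G_k}_{L^2(\Omega)}\le\tau^{\alpha-1}\nm{g}_{L^1(0,T;L^2(\Omega))}$.

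Then I would recover $W_j$ by extracting the $\zeta^j$-coefficient, $W_j=\frac1{2\pi i}\oint_{\snm{\zeta}=r}\zeta^{-j-1}\widehat W(\zeta)\,\mathrm d\zeta$ with $r<1$, and substitute $\zeta=e^{-z}$, which maps the circle onto the vertical segment $\Re z=-\ln r$, $\snm{\Im z}\le\pi$. The crucial structural observation is that $e^{jz}$ (as $j\in\mathbb N$), $e^{-z}$, and $\widehat b(z)=\sum_{k}(z+2k\pi i)^{\alpha-2}$ (the series \eqref{eq:wtb-series}) are all $2\pi i$-periodic, so the entire integrand $e^{jz}R(\tau^{-\alpha}e^{-z}\psi(z),\mathcal A)$ is $2\pi i$-periodic. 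This lets me deform the vertical segment leftward onto $\Upsilon_{\omega^*}$ (deformed around the origin as in Section~\ref{sec:pre}), the two horizontal edges at $\Im z=\pm\pi$ cancelling by periodicity and the deformation being valid because $\tau^{-\alpha}e^{-z}\psi(z)\in\rho(\mathcal A)$ throughout the enclosed region by \eqref{eq:0}. This identifies the $\zeta^m$-coefficient of $R(\tau^{-\alpha}A(\zeta),\mathcal A)$ with $\mathcal E_m$ of \eqref{eq:calEj}, so the Cauchy product yields $W_j=\tau^{-\alpha}\sum_{k=1}^j\mathcal E_{j-k}G_k$. Substituting $G_k=\tau^{\alpha-1}\int_{t_{k-1}}^{t_k}g$ and noting, via \eqref{eq:calE-def}, that $\lfloor(t_j-t)/\tau\rfloor=j-k$ for $t\in(t_{k-1},t_k)$, converts this finite sum into $\int_0^{t_j}\mathcal E(t_j-t)g(t)\,\mathrm dt$, which is \eqref{eq:Stau-g}.

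The main obstacle is the third step: rigorously justifying the change of variables and the leftward contour deformation to $\Upsilon_{\omega^*}$. One must verify analyticity of $z\mapsto R(\tau^{-\alpha}e^{-z}\psi(z),\mathcal A)$ on the region swept between the vertical segment and $\Upsilon_{\omega^*}$ (controlling the behaviour near $z=0$, where $e^{-z}\psi(z)\sim z^\alpha\to0$, through the small-arc deformation around the origin), confirm the resolvent decay needed to invoke Cauchy's theorem and to interchange the infinite sum with the resolvent (using the uniform bound \eqref{eq:R(z,A)}), and invoke the $2\pi i$-periodicity to cancel the horizontal segments. The remaining manipulations are routine.
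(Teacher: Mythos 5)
Your proposal follows essentially the same route as the paper's proof: both pass to the discrete Laplace transform of the extended sequence $\{W_k\}_{k\geqslant1}$ (your generating variable $\zeta=e^{-z}$ is only a reparametrization), both rest on the same symbol identity $(e^z-2+e^{-z})\widehat b(z)=e^{-z}\psi(z)$ from \eqref{eq:psi-def}, both solve for the transform with the resolvent using \eqref{eq:0}, and both invert by a contour integral over a vertical segment that is then deformed onto $\Upsilon_{\omega^*}$ (the paper invokes Cauchy's theorem for this; your explicit appeal to the $2\pi i$-periodicity of $e^{jz}R(\tau^{-\alpha}e^{-z}\psi(z),\mathcal A)$, via \eqref{eq:wtb-series}, is precisely what makes the horizontal edges cancel). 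One genuine difference works in your favor: by extracting the $\zeta^j$-coefficient of the product $\tau^{-\alpha}R(\tau^{-\alpha}A(\zeta),\mathcal A)\widehat G(\zeta)$ you get the causal sum $\sum_{k=1}^{j}\mathcal E_{j-k}G_k$ automatically, whereas the paper first writes the full sum over $1\leqslant k\leqslant J$ in \eqref{eq:eve} and then needs its Step 3 — shifting the contour to $\Re z=a$ and letting $a\to\infty$ with the lower bound on $\snm{e^{-z}\psi(z)}$ from \eqref{eq:re-psi} — to kill the terms with $k\geqslant j+1$. Your route dispenses with that step entirely.

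There is, however, one omission relative to the paper. You verify convergence of $\widehat G$ but never justify that $\widehat W(\zeta)=\sum_{k\geqslant1}W_k\zeta^k$ converges on some disc; without that, ``summing the $k$-th equation against $\zeta^k$'' and the Cauchy-product manipulation are purely formal, and the coefficient-extraction integral $\oint_{\snm{\zeta}=r}\zeta^{-j-1}\widehat W(\zeta)\,\mathrm{d}\zeta$ has no a priori meaning. The paper's Step 1 is devoted exactly to this point: it shows $\sup_{k\geqslant1}\nm{W_k}_{L^2(\Omega)}<\infty$ (via the stability estimate \eqref{eq:Stau-stab-infty}), so that the discrete Laplace transform is analytic on $\Sigma_{\pi/2}$. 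In your framework the repair is routine: since $\tau^{-\alpha}b_1\in(0,\infty)\subset\Sigma_{\omega_0}\subset\rho(\mathcal A)$, the operator $(b_1-\tau^\alpha\mathcal A)^{-1}$ is bounded on $L^2(\Omega)$, and the recursion together with $\sum_m\snm{a_m}<\infty$ and $\sum_k\nm{G_k}_{L^2(\Omega)}<\infty$ yields at worst geometric growth of $\nm{W_k}_{L^2(\Omega)}$ by induction, which gives $\widehat W$ a positive radius of convergence — but this step should be stated, not assumed.
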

\begin{proof} 
  Since the techniques used in this proof are standard in the theory of Laplace
  transform, we only provide a brief proof; see
  \cite{Mclean2015Time,Jin2015IMA,Yan2018} for more details. Extend $ g $ to $
  (T,\infty) $ by zero and define $ t_j := j\tau $ for each $ j > J $. Define $
  \{W_k\}_{k=1}^\infty \subset H_0^1(\Omega) $ by that, for any $ k \geqslant 1
  $,
  \begin{equation}
    \label{eq:W}
    b_1 W_k + \sum_{j=1}^{k-1} (b_{k-j+1}-2b_{k-j}+b_{k-j-1}) W_j
    - \tau^\alpha \mathcal A W_k =
    \tau^{\alpha-1} \int_{t_{k-1}}^{t_k} g(t) \, \mathrm{d}t
  \end{equation}
  in $ H^{-1}(\Omega) $. By definition,
  \begin{equation}
    \label{eq:Staujg}
    S_{\tau,j} g = W_j, \quad \forall 1 \leqslant j \leqslant J.
  \end{equation}
  The rest of this proof is divided into three steps.

  Step 1. We prove that the following discrete Laplace transform of $
  \{W_k\}_{k=1}^\infty $ is analytic on $ \Sigma_{\pi/2} $:
  \begin{equation}
    \label{eq:wtW}
    \widehat W(z) := \sum_{k=1}^\infty e^{-kz} W_k,
    \quad z \in \Sigma_{\pi/2}.
  \end{equation}
  Note first that we can assume that $ g \in L^\infty(0,\infty;L^2(\Omega)) $.
  Since
  \[
    \sup_{a > 0} \, \nm{g}_{{}_0H^{-\alpha/2}(0,a;L^2(\Omega))}
    < \infty, 
  \]
  by the techniques to prove \eqref{eq:Stau-Stauj} and \eqref{eq:Stau-stab-infty}
  we can obtain
  \[
    \sup_{k \geqslant 1} \, \nm{W_k}_{L^2(\Omega)} < \infty.
  \]
  Therefore, it is evident that $ \widehat W $ is analytic on $ \Sigma_{\pi/2}
  $.

  Step 2. Let us prove that, for any $ 1 \leqslant j \leqslant J $,
  \begin{equation}
    \label{eq:eve}
    W_j = \sum_{k=1}^J \frac{\tau^{-1}}{2\pi i}
    \int_{1-\pi i}^{1+\pi i} R(\tau^{-\alpha}e^{-z}\psi(z),\mathcal A)
    e^{(j-k)z} \, \mathrm{d}z
    \int_{t_{k-1}}^{t_k} g(t) \, \mathrm{d}t.
  \end{equation}
  Multiplying both sides of \eqref{eq:W} by $ e^{-kz} $ and summing over $ k $
  from $1$ to $\infty$, we obtain
  \[
    \big(
      (e^z-2+e^{-z})\widehat b(z) - \tau^\alpha \mathcal A
    \big) \widehat W(z) =
    \tau^{\alpha-1} \sum_{k=1}^\infty
    \int_{t_{k-1}}^{t_k} g(t) \, \mathrm{d}t
    e^{-kz}, \quad \forall z \in \Sigma_{\pi/2},
  \]
  which, together with \eqref{eq:psi-def}, yields
  \begin{equation}
    \label{eq:lxy}
    (e^{-z}\psi(z) - \tau^\alpha \mathcal A) \widehat W(z) =
    \tau^{\alpha-1} \sum_{k=1}^\infty
    \int_{t_{k-1}}^{t_k} g(t) \, \mathrm{d}t
    e^{-kz}, \quad \forall z \in \Sigma_{\pi/2}.
  \end{equation}
  Hence, from (\ref{eq:rho(A)}), \eqref{eq:0} and the fact $ g|_{(T,\infty)} = 0
  $, it follows that
  \begin{align*}
    \widehat W(z) &= \tau^{-1} R(\tau^{-\alpha} e^{-z}\psi(z),\mathcal A)
    \sum_{k=1}^\infty \int_{t_{k-1}}^{t_k}
    g(t) \, \mathrm{d}t e^{-kz} \\
    & = \tau^{-1} R(\tau^{-\alpha} e^{-z}\psi(z),\mathcal A)
    \sum_{k=1}^J \int_{t_{k-1}}^{t_k} g(t) \, \mathrm{d}t e^{-kz}
  \end{align*}
  for all $ z \in \Sigma_{\pi/2} $ with $ -\pi \leqslant \operatorname{Im} z
  \leqslant \pi $. Therefore, \eqref{eq:eve} follows from the equality
  \[
    W_j  = \frac1{2\pi i} \int_{1-\pi i}^{1+\pi i}
    \widehat W(z) e^{jz} \, \mathrm{d}z,
  \]
  which is evident by \eqref{eq:wtW}.

  Step 3. By Cauchy's integral theorem, we have, for any $a>1$, when $ k \geqslant
  j+1$,
  \begin{align}
    &\qquad   \Big \| \int_{1-\pi i}^{1+\pi i}
    R(\tau^{-\alpha}e^{-z}\psi(z),\mathcal A)
    e^{(j-k)z} \, \mathrm{d}z  \Big \|_{\mathcal L(L^2(\Omega))} \notag \\
    &   =\Big \|  \int_{a-\pi i}^{a+\pi i}
    R(\tau^{-\alpha}e^{-z}\psi(z),\mathcal A)
    e^{(j-k)z} \, \mathrm{d}z  \Big \|_{\mathcal L(L^2(\Omega))} \notag \\
    & \leqslant \mathcal M_0 e^{(j-k) a}
    \int_{a-\pi i}^{a+\pi i} \frac{|dz|}{\tau^{-\alpha} | e^{-z} \psi (z) |}
    \quad\text{(by \eqref{eq:R(z,A)}).}
    \label{eq:zq}
  \end{align}
  Since \eqref{eq:re-psi} implies
  \[
    \snm{e^{-z}\psi(z)} \geqslant C_\alpha
    \quad\text{ for all $ z \in \mathbb C $ with $ \Re z \geqslant 1 $},
  \]
  passing to the limit $ a \to \infty $ in \eqref{eq:zq} yields
  \[
    \int_{1-\pi i}^{1+\pi i}
    R(\tau^{-\alpha}e^{-z}\psi(z),\mathcal A)
    e^{(j-k)z} \, \mathrm{d}z = 0,  \quad \mbox{for} \; k \geqslant j+1.
  \]
  Thus from \eqref{eq:eve} we obtain
  \begin{align*}
    W_j &= \sum_{k=1}^j \frac{\tau^{-1}}{2\pi i}
    \int_{1-\pi i}^{1+\pi i} R(\tau^{-\alpha}e^{-z}\psi(z),\mathcal A)
    e^{(j-k)z} \, \mathrm{d}z
    \int_{t_{k-1}}^{t_k} g(t) \, \mathrm{d}t \\
    &= \sum_{k=1}^j \mathcal E_{j-k} \int_{t_{k-1}}^{t_k}
    g(t) \, \mathrm{d}t =
    \int_0^{t_j} \mathcal E(t_j-t) g(t) \, \mathrm{d}t.
  \end{align*}
  Here we have used the equality
  \[
    \int_{1-\pi i}^{1+\pi i}
    R(\tau^{-\alpha} e^{-z} \psi(z), \mathcal A) e^{(j-k)z} \, \mathrm{d}z
    = \int_{\Upsilon{\omega^*}}
    R(\tau^{-\alpha} e^{-z} \psi(z), \mathcal A) e^{(j-k)z} \, \mathrm{d}z,
  \]
  which can be easily verified by Cauchy's integral theorem. By
  \eqref{eq:Staujg}, this proves \eqref{eq:Stau-g} and thus completes the proof.
\end{proof}

\begin{remark}
  In \eqref{eq:Stau-g}, we use the piecewise kernel function $\mathcal{E}(t)$ to
  express the discrete solution $S_{\tau, j}g$, which is different from the
  discrete solution expression in literature \cite{Jin2016-L1,Yan2018}, where
  the authors assumed that the function $g$ has more regularities at 0 and has
  the Taylor expansion at $0$ and then applied the convolution techniques for
  obtaining the discrete solution. In our paper, we only assume that $ g \in
  L^\infty(0,T;L^2(\Omega))$ and we did not use the convolution techniques for
  obtaining the discrete solutions as in \cite{Jin2016-L1,Yan2018}. One may use
  the similar idea to consider more general function $g$; for example, $g$ is a
  stochastic Wiener process $g= \frac{d W(t)}{dt}$, where $W$ is the Hilbert
  space valued cylindrical Wiener process.
\end{remark}

\begin{lemma}
  \label{lem:resolvent1}
  For any $ z \in \Upsilon_{\omega^*} \setminus \{0\} $,
  \begin{equation}
    \label{eq:lsj-1}
    \nm{
      e^{z} R(\tau^{-\alpha}z^\alpha,\mathcal A) -
      R(\tau^{-\alpha}e^{-z}\psi(z),\mathcal A)
    }_{\mathcal L(L^2(\Omega))}
    \leqslant C_{\omega_0,\mathcal M_0}
    \snm{z}^{1-\alpha} \tau^{\alpha}.
  \end{equation}
\end{lemma}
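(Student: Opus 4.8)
The plan is to introduce the abbreviations $\lambda := \tau^{-\alpha} z^\alpha$ and $\mu := \tau^{-\alpha} e^{-z}\psi(z)$ and to compare the two resolvents $R(\lambda,\mathcal A)$ and $R(\mu,\mathcal A)$ by means of the resolvent identity, once I have verified that both $\lambda$ and $\mu$ lie in $\Sigma_{\omega_0}$ so that the bound \eqref{eq:R(z,A)} is available for each. On $\Upsilon_{\omega^*}$ the rays carry $\operatorname{Arg} z = \pm\omega^*$, hence $\operatorname{Arg} \lambda = \pm\alpha\omega^*$ with $\alpha\omega^* < \omega^* \leqslant \omega_0$, so $\lambda \in \Sigma_{\omega_0}$ and $\nm{R(\lambda,\mathcal A)}_{\mathcal L(L^2(\Omega))} \leqslant \mathcal M_0 \snm{\lambda}^{-1} = \mathcal M_0 \tau^\alpha \snm{z}^{-\alpha}$; that $\mu \in \Sigma_{\omega_0}$ is exactly \eqref{eq:0}, while \eqref{eq:psi>} gives $\snm{\mu} = \tau^{-\alpha}\snm{e^{-z}\psi(z)} \geqslant C_{\omega_0}\tau^{-\alpha}\snm{z}^\alpha$, whence $\nm{R(\mu,\mathcal A)}_{\mathcal L(L^2(\Omega))} \leqslant C_{\omega_0,\mathcal M_0}\tau^\alpha\snm{z}^{-\alpha}$.

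The second step is the decomposition
\[
  e^z R(\lambda,\mathcal A) - R(\mu,\mathcal A)
  = (e^z-1)R(\lambda,\mathcal A) + \big(R(\lambda,\mathcal A) - R(\mu,\mathcal A)\big),
\]
in which the bracket is rewritten, via the resolvent identity, as $(\mu-\lambda)R(\lambda,\mathcal A)R(\mu,\mathcal A)$ with $\mu - \lambda = \tau^{-\alpha}\big(e^{-z}\psi(z) - z^\alpha\big)$.

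The third step estimates the two pieces, and here the decisive observation is that $\Upsilon_{\omega^*}$ is a \emph{bounded} contour: the truncation $-\pi \leqslant \Im z \leqslant \pi$ forces $\snm{z} \leqslant \pi/\sin\omega^*$, so $\snm{e^{\pm z}}$ is bounded by a constant depending only on $\omega_0$. For the first piece, writing $e^z - 1 = z\int_0^1 e^{tz}\,\mathrm dt$ and using $\Re z \leqslant 0$ on the rays gives $\snm{e^z - 1} \leqslant C_{\omega_0}\snm{z}$, so that $\nm{(e^z-1)R(\lambda,\mathcal A)}_{\mathcal L(L^2(\Omega))} \leqslant C_{\omega_0,\mathcal M_0}\tau^\alpha\snm{z}^{1-\alpha}$. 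For the bracket I would split $e^{-z}\psi(z) - z^\alpha = e^{-z}\big(\psi(z)-z^\alpha\big) + (e^{-z}-1)z^\alpha$, bounding the first summand by \eqref{eq:129} together with the boundedness of $\snm{e^{-z}}$, and the second by $\snm{e^{-z}-1} \leqslant C_{\omega_0}\snm{z}$; this yields $\snm{e^{-z}\psi(z) - z^\alpha} \leqslant C_{\omega_0}\snm{z}^{\alpha+1}$, hence $\snm{\mu-\lambda} \leqslant C_{\omega_0}\tau^{-\alpha}\snm{z}^{\alpha+1}$. Multiplying this by the two resolvent bounds above produces $C_{\omega_0,\mathcal M_0}\tau^\alpha\snm{z}^{1-\alpha}$ for the bracket as well, and summing the two contributions gives \eqref{eq:lsj-1}.

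I expect the only genuinely delicate point to be the exponential factor $\snm{e^{-z}}$ that enters through $\mu - \lambda$; it appears to threaten the estimate for large $\snm{z}$, but is rendered harmless precisely because $\Upsilon_{\omega^*}$ is bounded, after which all powers of $\snm{z}$ and $\tau$ balance exactly. A minor book-keeping issue is that $\Upsilon_{\omega^*}$ lies on the boundary $\operatorname{Arg} z = \pm\omega^*$ of $\Sigma_{\omega^*}$ rather than in its interior, so \eqref{eq:0} and \eqref{eq:psi>} must be applied on the closed sector; this is legitimate upon choosing $\omega^*$ strictly below the threshold furnished by Lemma \ref{lem:psi-esti}, or by continuity.
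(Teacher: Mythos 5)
Your proof is correct, and every estimate in it checks out: the two resolvent bounds you derive are exactly the paper's \eqref{eq:lxy-1} and \eqref{eq:lxy-2}, and the final balancing of powers $\snm{z}^{\alpha+1}\cdot\snm{z}^{-\alpha}\cdot\snm{z}^{-\alpha}\cdot\tau^{-\alpha}\cdot\tau^{2\alpha}$ is the same arithmetic the paper performs. Where you differ is the algebraic decomposition. The paper uses a single operator factorization,
\[
  e^{z}R(\tau^{-\alpha}z^\alpha,\mathcal A)-R(\tau^{-\alpha}e^{-z}\psi(z),\mathcal A)
  =\big(\tau^{-\alpha}(\psi(z)-z^\alpha)+(1-e^{z})\mathcal A\big)
  R(\tau^{-\alpha}z^\alpha,\mathcal A)\,R(\tau^{-\alpha}e^{-z}\psi(z),\mathcal A),
\]
splitting into a scalar piece governed by \eqref{eq:129} and an operator piece $(1-e^z)\mathcal A R(\cdot)R(\cdot)$, which it tames with the identity $\mathcal A R(\tau^{-\alpha}z^\alpha,\mathcal A)=\tau^{-\alpha}z^\alpha R(\tau^{-\alpha}z^\alpha,\mathcal A)-I$; note that here the exponential only ever enters as $1-e^z$, bounded by $\snm{z}$ since $\Re z\leqslant 0$, with no appeal to the boundedness of the contour for that step. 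You instead peel off $(e^z-1)R(\tau^{-\alpha}z^\alpha,\mathcal A)$ first and compare the two resolvents by the plain resolvent identity, so that all prefactors stay scalar and the unbounded operator $\mathcal A$ never appears outside a resolvent; the price is the extra scalar split $e^{-z}\psi(z)-z^\alpha=e^{-z}(\psi(z)-z^\alpha)+(e^{-z}-1)z^\alpha$ and the bound on $\snm{e^{-z}}$, for which the truncation $-\pi\leqslant\Im z\leqslant\pi$ of $\Upsilon_{\omega^*}$ is genuinely needed, as you correctly observe. Your version is thus slightly more robust functional-analytically (it uses nothing beyond resolvent bounds), while the paper's is more compact and insensitive to the size of $\snm{e^{-z}}$. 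Finally, the boundary issue you flag---that $\Upsilon_{\omega^*}$ lies on $\snm{\operatorname{Arg}z}=\omega^*$ while \eqref{eq:0} is stated on the open sector---is real but equally present in the paper's own argument (note that \eqref{eq:psi>} is already stated on $\Upsilon_{\omega^*}\setminus\{0\}$), and your fix of taking $\omega^*$ strictly below the threshold of Lemma \ref{lem:psi-esti} is exactly the right resolution.
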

\begin{proof} 
  We have
  \begin{align*}
    & e^{z}R(\tau^{-\alpha}z^\alpha,\mathcal A) -
    R(\tau^{-\alpha}e^{-z}\psi(z),\mathcal A) \\
    ={} &
    \big(
      \tau^{-\alpha} \big( \psi(z) - z^\alpha) + (1-e^{z})\mathcal A
    \big) R(\tau^{-\alpha} z^\alpha, \mathcal A)
    R(\tau^{-\alpha}e^{-z}\psi(z),\mathcal A) \\
    ={} &
    \mathbb I_1 + \mathbb I_2,
  \end{align*}
  where
  \begin{align*}
    \mathbb I_1 &:= \tau^{-\alpha}(\psi(z)-z^\alpha)
    R(\tau^{-\alpha}z^\alpha,\mathcal A)
    R(\tau^{-\alpha} e^{-z}\psi(z),\mathcal A), \\
    \mathbb I_2 &:= (1-e^z)\mathcal A R(\tau^{-\alpha}z^\alpha,\mathcal A)
    R(\tau^{-\alpha} e^{-z}\psi(z),\mathcal A).
  \end{align*}
  Note that \eqref{eq:R(z,A)}, \eqref{eq:0} and \eqref{eq:psi>} imply
  \begin{align}
    \nm{R(\tau^{-\alpha}z^\alpha, \mathcal A)}_{\mathcal L(L^2(\Omega))}
    & \leqslant C_{\mathcal M_0}
    \snm{z}^{-\alpha} \tau^{\alpha}, \label{eq:lxy-1} \\
    \nm{
      R(\tau^{-\alpha}e^{-z}\psi(z), \mathcal A)
    }_{\mathcal L(L^2(\Omega))}
    & \leqslant C_{\omega_0,\mathcal M_0}
    \snm{z}^{-\alpha} \tau^{\alpha}. \label{eq:lxy-2}
  \end{align}
  By \eqref{eq:129}, \eqref{eq:lxy-1} and \eqref{eq:lxy-2} we have
  \begin{align*}
    \nm{\mathbb I_1}_{\mathcal L(L^2(\Omega))}
    & \leqslant C_{\omega_0,\mathcal M_0}
    \snm{z}^{1-\alpha} \tau^\alpha.
  \end{align*}
  Since
  \begin{align*} 
    & \nm{
      \mathcal AR(\tau^{-\alpha}z^\alpha,\mathcal A)
      R(\tau^{-\alpha}e^{-z}\psi(z), \mathcal A)
    }_{\mathcal L(L^2(\Omega))} \\
    ={} &
    \nm{
      (\tau^{-\alpha}z^\alpha R(\tau^{-\alpha}z^\alpha,\mathcal A) - I)
      R(\tau^{-\alpha}e^{-z}\psi(z), \mathcal A)
    }_{\mathcal L(L^2(\Omega))} \\
    \leqslant{} &
    C_{\omega_0,\mathcal M_0}
    \snm{z}^{-\alpha} \tau^\alpha
    \quad\text{(by \eqref{eq:lxy-1} and \eqref{eq:lxy-2}),}
  \end{align*}
  we obtain
  \[
    \nm{\mathbb I_2}_{\mathcal L(L^2(\Omega))}
    \leqslant C_{\omega_0,\mathcal M_0}
    \snm{z}^{1-\alpha} \tau^\alpha.
  \]
  Combining the above estimates of $ \mathbb I_1 $ and $ \mathbb I_2 $ proves
  \eqref{eq:lsj-1} and hence this lemma.
\end{proof}

\begin{lemma} 
  \label{lem:E-calE}
  For any $ 1 \leqslant j \leqslant J $,
  \begin{equation}
    \label{eq:E-calE}
    \nm{E(t_j) - \mathcal E(t_j-)}_{\mathcal L(L^2(\Omega))}
    \leqslant C_{\omega_0,\mathcal M_0}
    \tau^{\alpha-1} j^{\alpha-2}.
  \end{equation}
\end{lemma}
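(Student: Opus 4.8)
The plan is to realize $E(t_j)$ and $\mathcal E(t_j-)$ as contour integrals over the \emph{same} path and reduce their difference to the resolvent comparison of Lemma~\ref{lem:resolvent1}. Since $\mathcal E(t)=\tau^{-1}\mathcal E_{\lfloor t/\tau\rfloor}$ is piecewise constant and $t_j=j\tau$, on $[t_{j-1},t_j)$ it equals $\tau^{-1}\mathcal E_{j-1}$, so $\mathcal E(t_j-)=\tau^{-1}\mathcal E_{j-1}$ with $\mathcal E_{j-1}$ given by \eqref{eq:calEj}. For $E(t_j)$ I start from \eqref{eq:E-def} with $t=j\tau$ and substitute $z\mapsto z/\tau$, which fixes the ray $\Gamma_{\omega_0}$, to get $E(t_j)=\frac{\tau^{-1}}{2\pi i}\int_{\Gamma_{\omega_0}}e^{jz}R(\tau^{-\alpha}z^\alpha,\mathcal A)\,\mathrm{d}z$. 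On the closed sector between $\Gamma_{\omega^*}$ and $\Gamma_{\omega_0}$ one has $\snm{\operatorname{Arg}(\tau^{-\alpha}z^\alpha)}=\alpha\snm{\operatorname{Arg}z}\leqslant\alpha\omega_0<\omega_0$, so \eqref{eq:rho(A)} makes the integrand analytic there, \eqref{eq:R(z,A)} gives $\nm{R(\tau^{-\alpha}z^\alpha,\mathcal A)}_{\mathcal L(L^2(\Omega))}\leqslant\mathcal M_0\tau^\alpha\snm{z}^{-\alpha}$, and $\Re z<0$ forces $e^{jz}$ to decay at infinity; Cauchy's theorem then lets me deform the contour to obtain $E(t_j)=\frac{\tau^{-1}}{2\pi i}\int_{\Gamma_{\omega^*}}e^{jz}R(\tau^{-\alpha}z^\alpha,\mathcal A)\,\mathrm{d}z$.

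Splitting $\Gamma_{\omega^*}=\Upsilon_{\omega^*}\cup(\Gamma_{\omega^*}\setminus\Upsilon_{\omega^*})$ and subtracting $\tau^{-1}\mathcal E_{j-1}$, I write $E(t_j)-\mathcal E(t_j-)=\mathrm I+\mathrm{II}$, where, after factoring $e^{jz}=e^{(j-1)z}e^z$,
\[
  \mathrm I:=\frac{\tau^{-1}}{2\pi i}\int_{\Upsilon_{\omega^*}}e^{(j-1)z}\big(e^{z}R(\tau^{-\alpha}z^\alpha,\mathcal A)-R(\tau^{-\alpha}e^{-z}\psi(z),\mathcal A)\big)\,\mathrm{d}z
\]
and $\mathrm{II}:=\frac{\tau^{-1}}{2\pi i}\int_{\Gamma_{\omega^*}\setminus\Upsilon_{\omega^*}}e^{jz}R(\tau^{-\alpha}z^\alpha,\mathcal A)\,\mathrm{d}z$. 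The harmless shift from the weight $e^{jz}$ to $e^{(j-1)z}$ is exactly what turns the bracket in $\mathrm I$ into the quantity that Lemma~\ref{lem:resolvent1} bounds by $C_{\omega_0,\mathcal M_0}\snm{z}^{1-\alpha}\tau^\alpha$ on $\Upsilon_{\omega^*}$.

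To estimate $\mathrm I$ I parametrize $\Upsilon_{\omega^*}$ by its two segments $z=re^{\pm i\omega^*}$, $0<r\leqslant\pi/\sin\omega^*$, on which $\Re z=-\kappa r$ with $\kappa:=\snm{\cos\omega^*}>0$, so that $\nm{\mathrm I}_{\mathcal L(L^2(\Omega))}$ is bounded by a constant times $\tau^{\alpha-1}\int_0^\infty e^{-(j-1)\kappa r}r^{1-\alpha}\,\mathrm{d}r=\tau^{\alpha-1}\Gamma(2-\alpha)\big((j-1)\kappa\big)^{\alpha-2}$ for $j\geqslant2$; using $j-1\geqslant j/2$ and $\sup_{0<\alpha<1}\Gamma(2-\alpha)<\infty$ this gives $\nm{\mathrm I}\leqslant C_{\omega_0,\mathcal M_0}\tau^{\alpha-1}j^{\alpha-2}$, while for $j=1$ the weight $e^{(j-1)z}=1$ and $\int_{\Upsilon_{\omega^*}}\snm{z}^{1-\alpha}\snm{\mathrm{d}z}$ is a bounded constant, matching $j^{\alpha-2}=1$. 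For $\mathrm{II}$ one has $r>r_0:=\pi/\sin\omega^*$, whence $\nm{\mathrm{II}}\leqslant C\tau^{\alpha-1}\int_{r_0}^\infty e^{-j\kappa r}r^{-\alpha}\,\mathrm{d}r\leqslant C\tau^{\alpha-1}r_0^{-\alpha}e^{-j\kappa r_0}/(j\kappa)$, and since $\sup_{j\geqslant1}j\,e^{-j\kappa r_0}<\infty$ this is also $\leqslant C_{\omega_0}\tau^{\alpha-1}j^{\alpha-2}$; adding the two bounds proves \eqref{eq:E-calE}. I expect the main difficulty to lie not in the contour manipulations but in the $\alpha$-uniform bookkeeping—above all in noticing that the Gamma factor $\Gamma(2-\alpha)$ stays bounded as $\alpha\to1-$, which is precisely what keeps the constant in \eqref{eq:E-calE} from blowing up and renders the estimate $\alpha$-robust.
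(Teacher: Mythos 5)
Your proof is correct and follows essentially the same route as the paper: the same rescaled contour representation of $E(t_j)$ over $\Gamma_{\omega^*}$, the same splitting into an integral over $\Upsilon_{\omega^*}$ of $e^{(j-1)z}\bigl(e^{z}R(\tau^{-\alpha}z^\alpha,\mathcal A)-R(\tau^{-\alpha}e^{-z}\psi(z),\mathcal A)\bigr)$ handled by Lemma~\ref{lem:resolvent1}, and the same tail integral over $\Gamma_{\omega^*}\setminus\Upsilon_{\omega^*}$ handled by \eqref{eq:R(z,A)}. Your treatment is in fact slightly more careful than the paper's on the minor points (explicit justification of the deformation from $\Gamma_{\omega_0}$ to $\Gamma_{\omega^*}$, the separate $j=1$ case, the $j-1\geqslant j/2$ device in place of the paper's backwards-looking intermediate inequality, and the $\alpha$-uniform bound on $\Gamma(2-\alpha)$), but these are refinements of the identical argument.
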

\begin{proof} 
  Inserting $ t = t_j $ into \eqref{eq:E-def} yields
  \[ 
    E(t_j) = \frac1{2\pi i} \int_{\Gamma_{\omega^*}} e^{t_jz}
    R(z^\alpha,\mathcal A) \, \mathrm{d}z  =
    \frac{\tau^{-1}}{2\pi i}
    \int_{\Gamma_{\omega^*}} e^{jz}
    R(\tau^{-\alpha}z^\alpha, \mathcal A) \, \mathrm{d}z,
  \]
  so that from \eqref{eq:calE-def} and \eqref{eq:calEj} it follows that
  \[
    E(t_j) - \mathcal E(t_j-) = \mathbb I_1 + \mathbb I_2,
  \]
  where
  \begin{align*} 
    \mathbb I_1 &:= \frac{\tau^{-1}}{2\pi i}
    \int_{\Gamma_{\omega^*}\setminus\Upsilon_{\omega^*}}
    e^{jz} R(\tau^{-\alpha}z^\alpha,\mathcal A) \, \mathrm{d}z, \\
    \mathbb I_2 &:= \frac{\tau^{-1}}{2\pi i}
    \int_{\Upsilon_{\omega^*}} e^{(j-1)z} \big(
      e^z R(\tau^{-\alpha}z^\alpha,\mathcal A) -
      R(\tau^{-\alpha}e^{-z}\psi(z),\mathcal A)
    \big) \, \mathrm{d}z.
  \end{align*}
  For $ \mathbb I_1 $, we have, by \eqref{eq:R(z,A)},
  \begin{align*} 
    & \nm{\mathbb I_1}_{\mathcal L(L^2(\Omega))} \leqslant
    C_{\mathcal M_0} \tau^{-1} \int_{\pi/\sin\omega^*}^\infty
    e^{j\cos\omega^* r}  (\tau^{\alpha}r^{-\alpha} )  \, \mathrm{d}r  \\
    \leqslant {}
    & C_{\mathcal M_0} \tau^{\alpha-1}
    \int_{\pi/\sin\omega^*}^\infty
    e^{j\cos\omega^* r} r^{-\alpha} \, \mathrm{d}r \\
    \leqslant {}
    & C_{\mathcal M_0} \tau^{\alpha-1}
    \int_{\pi/\sin\omega^*}^\infty e^{j\cos\omega^* r} r^{1-\alpha}  \, \mathrm{d}r
\quad \mbox{( since}\,  r \, \mbox{is lower bounded)}
  \\
    \leqslant {}  & C_{\omega_0,\mathcal M_0} \tau^{\alpha-1}
    j^{\alpha-2} e^{j\pi\cot\omega^*}
    \leqslant   C_{\omega_0,\mathcal M_0} \tau^{\alpha-1}
    j^{\alpha-2}.
  \end{align*}
  For $ \mathbb I_2 $, by \eqref{eq:lsj-1} we obtain
  \begin{align*} 
    \nm{\mathbb I_2}_{\mathcal L(L^2(\Omega))}
    & \leqslant C_{\omega_0,\mathcal M_0}
    \tau^{-1} \int_0^{\pi/\sin\omega^*}
    e^{(j-1)\cos\omega^* r} r  \big (\tau^{\alpha} r^{-\alpha} \big ) \, \mathrm{d}r  \\
    & \leqslant C_{\omega_0,\mathcal M_0}
    \tau^{\alpha-1} \int_0^{\pi/\sin\omega^*}
    e^{(j-1)\cos\omega^* r} r^{1-\alpha} \, \mathrm{d}r  \\
& \leqslant C_{\omega_0,\mathcal M_0}
    \tau^{\alpha-1} \int_0^{\pi/\sin\omega^*}
    e^{ j\cos\omega^* r} r^{1-\alpha} \, \mathrm{d}r
\leqslant C_{\omega_0,\mathcal M_0}
    \tau^{\alpha-1} j^{\alpha-2}.
  \end{align*}
  Combining the above estimates of $ \mathbb I_1 $ and $ \mathbb I_2 $ yields
  \eqref{eq:E-calE} and thus concludes the proof.
\end{proof}

\begin{lemma}
  \label{lem:int-E-calE}
  We have
  \begin{small}
  \begin{equation}
    \label{eq:int-E-wtE}
    \nm{E-\mathcal E}_{L^1(0,T;\mathcal L(L^2(\Omega)))}
    \leqslant C_{\omega_0,\mathcal M_0} \,
    \Big(
      \frac1\alpha + \frac{1-J^{\alpha-1}}{1-\alpha}
    \Big) \tau^\alpha.
  \end{equation}
  \end{small}
\end{lemma}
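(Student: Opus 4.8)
The plan is to decompose the $L^1$-norm over the grid and, on each subinterval, compare both $E$ and the piecewise-constant $\mathcal E$ against the common anchor $E(t_j)$. Writing $\nm{E-\mathcal E}_{L^1(0,T;\mathcal L(L^2(\Omega)))}=\sum_{j=1}^J\int_{t_{j-1}}^{t_j}\nm{E(t)-\mathcal E(t)}_{\mathcal L(L^2(\Omega))}\,\mathrm dt$ and recalling from \eqref{eq:calE-def} that $\mathcal E(t)=\tau^{-1}\mathcal E_{j-1}=\mathcal E(t_j-)$ for $t\in(t_{j-1},t_j)$, the triangle inequality gives
\[
  \int_{t_{j-1}}^{t_j}\nm{E(t)-\mathcal E(t)}_{\mathcal L(L^2(\Omega))}\,\mathrm dt
  \leqslant \int_{t_{j-1}}^{t_j}\nm{E(t)-E(t_j)}_{\mathcal L(L^2(\Omega))}\,\mathrm dt
  + \tau\,\nm{E(t_j)-\mathcal E(t_j-)}_{\mathcal L(L^2(\Omega))}.
\]
The second term is immediately controlled by Lemma \ref{lem:E-calE}, contributing $C_{\omega_0,\mathcal M_0}\tau^\alpha j^{\alpha-2}$.

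For the first term I would use the derivative bound \eqref{eq:E'}, writing $E(t)-E(t_j)=-\int_t^{t_j}E'(s)\,\mathrm ds$, so that $\nm{E(t)-E(t_j)}_{\mathcal L(L^2(\Omega))}\leqslant C_{\omega_0,\mathcal M_0}\int_t^{t_j}s^{\alpha-2}\,\mathrm ds$. For $j\geqslant 2$ one has $s\geqslant t_{j-1}=(j-1)\tau$ on the interval, hence $s^{\alpha-2}\leqslant((j-1)\tau)^{\alpha-2}$, and integrating twice over intervals of length $\tau$ yields the robust bound $C_{\omega_0,\mathcal M_0}\tau^\alpha(j-1)^{\alpha-2}$.

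The delicate case is $j=1$, where $E$ is singular at the origin: the crude split $\int_0^\tau\nm{E(t)}_{\mathcal L(L^2(\Omega))}\,\mathrm dt+\nm{\mathcal E_0}_{\mathcal L(L^2(\Omega))}$ produces a non-robust factor $1/(1-\alpha)$ coming from bounding $\nm{\mathcal E_0}_{\mathcal L(L^2(\Omega))}$ via \eqref{eq:lxy-2} over $\Upsilon_{\omega^*}$. I would therefore keep the anchor $E(t_1)$ instead, exploiting the cancellation
\[
  \int_0^\tau\!\int_t^\tau s^{\alpha-2}\,\mathrm ds\,\mathrm dt
  = \int_0^\tau\frac{t^{\alpha-1}-\tau^{\alpha-1}}{1-\alpha}\,\mathrm dt
  = \frac{1}{1-\alpha}\Big(\frac{\tau^\alpha}\alpha-\tau^\alpha\Big)=\frac{\tau^\alpha}\alpha,
\]
in which the spurious $1/(1-\alpha)$ is annihilated by the factor $1-\alpha$. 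This gives $\int_0^\tau\nm{E(t)-E(t_1)}_{\mathcal L(L^2(\Omega))}\,\mathrm dt\leqslant C_{\omega_0,\mathcal M_0}\tau^\alpha/\alpha$; combined with $\tau\,\nm{E(t_1)-\mathcal E(t_1-)}_{\mathcal L(L^2(\Omega))}\leqslant C_{\omega_0,\mathcal M_0}\tau^\alpha$ from Lemma \ref{lem:E-calE}, this disposes of $j=1$ and supplies the $1/\alpha$ term.

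Finally I would sum over $j$. The accumulated contributions are bounded by $C_{\omega_0,\mathcal M_0}\tau^\alpha\big(\tfrac1\alpha+\sum_{j=1}^J j^{\alpha-2}\big)$, and comparing the remaining sum with an integral gives $\sum_{j=1}^J j^{\alpha-2}\leqslant 1+\int_1^J x^{\alpha-2}\,\mathrm dx=1+\frac{1-J^{\alpha-1}}{1-\alpha}\leqslant \frac1\alpha+\frac{1-J^{\alpha-1}}{1-\alpha}$, which produces exactly \eqref{eq:int-E-wtE}. The main obstacle is precisely the $j=1$ interval: one must resist estimating $E$ and $\mathcal E$ separately there, and instead use the cancellation above to keep the constant $\alpha$-robust rather than introducing a factor that blows up as $\alpha\to1-$.
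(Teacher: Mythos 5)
Your proposal is correct and follows essentially the same route as the paper: the same interval-by-interval decomposition anchored at $E(t_j)$, Lemma \ref{lem:E-calE} for the term $\tau\,\nm{E(t_j)-\mathcal E(t_j-)}_{\mathcal L(L^2(\Omega))}$, the derivative bound \eqref{eq:E'} for $j\geqslant 2$, and an integral-comparison summation that keeps the constant $\alpha$-robust. The only (harmless) deviation is on the first interval, where the paper bounds $\nm{E-E(t_1)}_{L^1(0,t_1;\mathcal L(L^2(\Omega)))}$ directly from \eqref{eq:E} by estimating $E(t)$ and $E(t_1)$ separately (which already yields $C\tau^\alpha/\alpha$ with no $1/(1-\alpha)$ factor), whereas you reach the same bound through \eqref{eq:E'} and the exact cancellation of the $1/(1-\alpha)$ factor.
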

\begin{proof} 
  By \eqref{eq:E} we have
  \begin{equation} 
    \label{eq:E-Et1}
    \nm{E - E(t_1)}_{L^1(0,t_1;\mathcal L(L^2(\Omega)))}
    \leqslant C_{\omega_0,\mathcal M_0}
    \tau^\alpha  \alpha^{-1},
  \end{equation}
  and a straightforward calculation gives, by \eqref{eq:E'},
  \begin{align} 
    & \sum_{j=2}^J \nm{
      E - E(t_j)
    }_{L^1(t_{j-1},t_j;\mathcal L(L^2(\Omega)))}
    \leqslant \tau \nm{E'}_{L^1(t_1,T;\mathcal L(L^2(\Omega)))} \notag \\
    \leqslant{} &
    C_{\omega_0,\mathcal M_0} \tau \int_{t_1}^T t^{\alpha-2} \, \mathrm{d}t
    = C_{\omega_0,\mathcal M_0} \tau^\alpha (1-J^{\alpha-1})(1-\alpha)^{-1}.
    \label{eq:E-Ej}
  \end{align}
  It follows that
  \begin{align*}
    & \sum_{j=1}^J \nm{E-E(t_j)}_{
      L^1(t_{j-1},t_j;\mathcal L(L^2(\Omega)))
    }
    \leqslant
    C_{\omega_0,\mathcal M_0} \tau^\alpha \Big(
      \alpha^{-1} + (1-J^{\alpha-1})(1-\alpha)^{-1}
    \Big).
  \end{align*}
  Further we have, by  Lemma \ref{lem:E-calE},
  \begin{align*} 
    \sum_{j=1}^J \tau \nm{E(t_j) - \mathcal E(t_j-)}_{
      \mathcal L(L^2(\Omega))
    } & \leqslant C_{\omega_0,\mathcal M_0}
    \tau^\alpha \sum_{j=1}^J j^{\alpha-2} \\
    & \leqslant C_{\omega_0,\mathcal M_0}
    \tau^\alpha (1-J^{\alpha-1})(1-\alpha)^{-1}.
  \end{align*}
  Thus we get
  \begin{align*}
    & \nm{E-\mathcal E}_{L^1(0,T;\mathcal L(L^2(\Omega)))} \\
    \leqslant{} & \sum_{j=1}^J \Big(
      \nm{E-E(t_j)}_{L^1(t_{j-1},t_j;\mathcal L(L^2(\Omega)))} +
      \tau \nm{ E(t_j)-\mathcal E(t_j-) }_{
        L^1(t_{j-1},t_j;\mathcal L(L^2(\Omega)))
      }
    \Big) \\
    \leqslant{} & C_{\omega_0,\mathcal M_0} \tau^\alpha
    \Big(
      \alpha^{-1} +
      (1-J^{\alpha-1})(1-\alpha)^{-1}
    \Big).
  \end{align*}
  This proves \eqref{eq:int-E-wtE} and hence this lemma.
\end{proof}

Finally, we are in a position to conclude the proof of Theorem \ref{thm:conv-Stau} as
follows. By \eqref{eq:Sg-l1}  and \eqref{eq:Stau-g} we have
\begin{align*}
  \max_{1 \leqslant j \leqslant J}
  \nm{(Sg)(t_j)-S_{\tau,j}g}_{L^2(\Omega)} \leqslant
  \nm{E-\mathcal E}_{L^1(0,T;\mathcal L(L^2(\Omega)))}
  \nm{g}_{L^\infty(0,T;L^2(\Omega))},
\end{align*}
so that \eqref{eq:S-Stau-g} follows from \eqref{eq:int-E-wtE}. By
\eqref{eq:calE-def} we see that $ \mathcal E $ is piecewise constant, and then by
\eqref{eq:Stau-delta}, \eqref{eq:Stau-g} and \eqref{eq:delta_0} we obtain $ S_{\tau,j}(v\delta_0) =
\mathcal E(t_j-) v $, $ 1 \leqslant j \leqslant J $. Hence, a straightforward
computation yields, by \eqref{eq:Sdelta},
\begin{small}
\begin{align*}
  & \max_{1 \leqslant j \leqslant J} j^{2-\alpha} \nm{
    S(v\delta_0)(t_j) \!-\! S_{\tau,j}(v\delta_0)
  }_{L^2(\Omega)} \! \leqslant \!
  \max_{1 \leqslant j \leqslant J} j^{2-\alpha}
  \nm{E(t_j) \!-\! \mathcal E(t_j\!-)}_{\mathcal L(L^2(\Omega)\!)}
  \nm{v}_{L^2(\Omega)}, \\
  & \sum_{j=1}^J \nm{
    S(v\delta_0) - S_{\tau,j}(v\delta_0)
  }_{ L^1(t_{j-1},t_j;L^2(\Omega)) } \leqslant
  \nm{E-\mathcal E}_{L^1(0,T;\mathcal L(L^2(\Omega)))}
  \nm{v}_{L^2(\Omega)}.
\end{align*}
\end{small}
Therefore, \eqref{eq:S-Stau-vdelta}, \eqref{eq:S-Stau-vdelta-l1} follow from
\eqref{eq:E-calE}, \eqref{eq:int-E-wtE}, respectively. This completes the proof of
Theorem \ref{thm:conv-Stau}.

\section{An inverse problem of a fractional diffusion equation}
\label{sec:inverse}
\subsection{Continuous problem} 
We consider reconstructing the source term of a fractional diffusion equation
from the value of the solution at a fixed time; more precisely, the task is to
seek a suitable source $ f $ to ensure that the solution of problem
\eqref{eq:model} achieves a given value $ y_d $ at the final time $ T $. Applying
the well-known Tikhonov regularization technique to this inverse problem yields
the following minimization problem:
\begin{equation}
  \label{eq:inverse}
  \min\limits_{
    \substack{
      u \in U_{\text{ad}} \\
      y \in C((0,T];L^2(\Omega))
    }
  } J(y,u) := \frac12 \nm{y(T) - y_d}_{L^2(\Omega)}^2 +
  \frac\nu2 \nm{u}_{L^2(0,T;L^2(\Omega))}^2,
\end{equation}
subject to the state equation
\begin{equation}
  (\D_{0+}^\alpha - \mathcal A) y = u,
  \quad\text{ with } y(0) = 0,
\end{equation}
where $ y_d \in L^2(\Omega) $, $ \nu > 0 $ is a regularization parameter, and
\begin{align*}
  U_{\text{ad}} &:= \left\{
    v \in L^2(0,T;L^2(\Omega)):\
    u_* \leqslant v \leqslant u^* \text{ a.e.~in } \Omega \times (0,T)
  \right\},
\end{align*}
with $ u_* $ and $ u^* $ being two given constants.
\begin{remark}
  We refer the reader to \cite{Prilepko2000,Samarskii2007} for the inverse
  problems of parabolic partial differential equations, and refer the reader to
  \cite[Chapter 3]{Troltzsh2010} for the linear-quadratic parabolic control
  problems.
\end{remark}

We call $ u \in U_\text{ad} $ a mild solution to problem \eqref{eq:inverse} if $
u $ solves the following minimization problem:
\begin{equation}
  \label{eq:mild_sol}
  \min_{u \in U_\text{ad}}
  J(u) := \frac12 \nm{(Su)(T) - y_d}_{L^2(\Omega)}^2 +
  \frac\nu2 \nm{u}_{L^2(0,T;L^2(\Omega))},
\end{equation}
where we recall that $ S $ is defined by \eqref{eq:Sg-l1}.

\begin{lemma} 
  \label{lem:Sg-dual-weakly}
  Assume that $ g \in L^q(0,T;L^2(\Omega)) $ with $ q > 1/\alpha $. Then
  \begin{equation}
    \label{eq:Sdelta-dual}
    \dual{(Sg)(T), v}_\Omega =
    \dual{g, S^*(v\delta_T)}_{\Omega \times (0,T)}
  \end{equation}
  for all $ v \in L^2(\Omega) $.
\end{lemma}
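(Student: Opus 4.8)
The plan is to unwind both sides to a single time integral over $(0,T)$ and then move the spatial pairing $\dual{\cdot,v}_\Omega$ through that integral; the only genuine subtlety is the integrability that legitimises this interchange, and it is exactly here that the hypothesis $q > 1/\alpha$ enters. First I would record, from \eqref{eq:Sg-l1}, that
\[
  (Sg)(T) = \int_0^T E(T-s)\,g(s) \, \mathrm{d}s
\]
is a Bochner integral in $ L^2(\Omega) $, and check that it converges absolutely. By the bound \eqref{eq:E} and Hölder's inequality,
\[
  \int_0^T \nm{E(T-s)g(s)}_{L^2(\Omega)} \, \mathrm{d}s
  \leqslant C_{\omega_0,\mathcal M_0} \int_0^T (T-s)^{\alpha-1}
  \nm{g(s)}_{L^2(\Omega)} \, \mathrm{d}s
  \leqslant C \, \nm{(T-\cdot)^{\alpha-1}}_{L^{q'}(0,T)}
  \nm{g}_{L^q(0,T;L^2(\Omega))},
\]
where $ q' $ is the conjugate exponent of $ q $. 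The assumption $ q > 1/\alpha $ is equivalent to $ (1-\alpha)q' < 1 $, which is precisely the condition making the singular factor $ (T-\cdot)^{\alpha-1} $ lie in $ L^{q'}(0,T) $ (this is the same mechanism behind \eqref{eq:Sg-C}). Hence $ E(T-\cdot)g(\cdot) \in L^1(0,T;L^2(\Omega)) $ and $ (Sg)(T) $ is well defined.

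Next I would exploit that $ \dual{\cdot, v}_\Omega = \int_\Omega (\cdot)\,\overline v $ is a bounded linear functional on $ L^2(\Omega) $, so it commutes with the Bochner integral (equivalently, Fubini applies thanks to the integrability just established):
\[
  \dual{(Sg)(T), v}_\Omega
  = \int_0^T \dual{E(T-s)g(s), v}_\Omega \, \mathrm{d}s.
\]
Applying the adjoint identity \eqref{eq:E-E*} with $ t = T-s $ pointwise in $ s $ gives $ \dual{E(T-s)g(s), v}_\Omega = \dual{g(s), E^*(T-s)v}_\Omega $, and by \eqref{eq:S*delta} we have $ E^*(T-s)v = (S^*(v\delta_T))(s) $. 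Substituting and recognising the time integral as the space--time pairing yields
\[
  \dual{(Sg)(T), v}_\Omega
  = \int_0^T \dual{g(s), (S^*(v\delta_T))(s)}_\Omega \, \mathrm{d}s
  = \dual{g, S^*(v\delta_T)}_{\Omega \times (0,T)},
\]
which is \eqref{eq:Sdelta-dual}.

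The main (and essentially only) obstacle is the justification of the interchange of $ \dual{\cdot,v}_\Omega $ with the $ s $-integral. This is not a deep difficulty but must be tied correctly to the hypothesis: the same computation that bounds $ \nm{E(T-\cdot)g(\cdot)}_{L^1(0,T;L^2(\Omega))} $ also shows, via \eqref{eq:E*} and $ v \in L^2(\Omega) $, that $ S^*(v\delta_T) \in L^{q'}(0,T;L^2(\Omega)) $, so the right-hand pairing $ \dual{g, S^*(v\delta_T)}_{\Omega\times(0,T)} $ is finite and every step above is legitimate. Everything else is a routine linearity computation.
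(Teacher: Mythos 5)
Your proof is correct and follows essentially the same route as the paper: express $(Sg)(T)$ as the Bochner integral $\int_0^T E(T-s)g(s)\,\mathrm{d}s$, pass the pairing $\dual{\cdot,v}_\Omega$ through the integral, apply the adjoint identity \eqref{eq:E-E*} pointwise, and identify $E^*(T-s)v$ with $(S^*(v\delta_T))(s)$ via \eqref{eq:S*delta}. The only difference is cosmetic: the paper justifies the integrability by citing \eqref{eq:Sg-C}, while you verify it directly with H\"older's inequality and the equivalence $q>1/\alpha \Leftrightarrow (1-\alpha)q'<1$, which is the same mechanism spelled out.
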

\begin{proof}
  By \eqref{eq:Sg-l1} and \eqref{eq:Sg-C}, $ Sg \in C([0,T];L^2(\Omega)) $ and
  \begin{small}
  \[
    (Sg)(T) = \int_0^T E(T-t) g(t) \, \mathrm{d}t,
  \]
  \end{small}
  so that
  \begin{small}
  \[ 
    \dual{(Sg)(T), v}_\Omega =
    \Dual{\int_0^T E(T-t) g(t) \, \mathrm{d}t, \, v }_\Omega =
    \int_0^T \dual{E(T-t)g(t), v}_\Omega \, \mathrm{d}t.
  \]
  \end{small}
  Because \eqref{eq:E-E*} implies
  \[
    \dual{E(T-t)g(t), v}_\Omega = \dual{g(t),E^*(T-t)v}_\Omega,
    \quad \text{a.e.}~t \in (0,T),
  \]
  it follows that
  \begin{align*} 
    \dual{(Sg)(T), v}_\Omega & =
    \int_0^T \dual{g(t), E^*(T-t) v}_\Omega \, \mathrm{d}t
 = \dual{g, S^*(v\delta_T)}_{\Omega \times (0,T)}
    \quad\text{(by \eqref{eq:S*delta}),}
  \end{align*}
  namely, \eqref{eq:Sdelta-dual} holds indeed. This completes the proof.
\end{proof}

Assume that $ q > 1/\alpha $ and $ q \geqslant 2 $. By \eqref{eq:Sg-C}, $
(S\cdot)(T) $ is a bounded linear operator form $ L^q(0,T;L^2(\Omega)) $ to $
L^2(\Omega) $. Clearly, $ J $ in \eqref{eq:mild_sol} is a strictly convex
functional on $ L^q(0,T;L^2(\Omega)) $, and $ U_\text{ad} $ is a convex, bounded
and closed subset of $ L^q(0,T;L^2(\Omega)) $. By Lemma \ref{lem:Sg-dual-weakly}, a
routine argument (cf.~\cite[Theorems 2.14 and 2.21]{Troltzsh2010}) yields the
following theorem.
\begin{theorem} 
  \label{thm:basic-regu} Problem \eqref{eq:mild_sol} admits a unique mild
  solution $ u \in U_\text{ad} $, and the following first-order optimality
  condition holds:
  \begin{subequations}
  \begin{numcases}{}
    y = Su, \label{eq:optim-y} \\
    p = S^*\big( (y(T)-y_d)\delta_T \big), \label{eq:optim-p} \\
    \Dual{p + \nu u, v-u}_{\Omega \times (0,T)}
    \geqslant 0 \quad \text{ for all } v \in U_\text{ad}.
    \label{eq:optim-u}
  \end{numcases}
  \end{subequations}
\end{theorem}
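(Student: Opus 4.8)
The plan is to treat \eqref{eq:mild_sol} as a strictly convex optimization problem over a convex admissible set and to proceed by the direct method, followed by a first-order optimality analysis, exactly along the lines of \cite[Theorems 2.14 and 2.21]{Troltzsh2010}.

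First I would settle existence and uniqueness. Since $ 2 \leqslant q < \infty $, the space $ L^q(0,T;L^2(\Omega)) $ is reflexive; since $ (S\cdot)(T) $ is a bounded linear operator from $ L^q(0,T;L^2(\Omega)) $ into $ L^2(\Omega) $ by \eqref{eq:Sg-C}, the functional $ J $ is norm-continuous and convex, hence weakly lower semicontinuous. The regularization term $ \frac\nu2\nm{\cdot}^2 $ with $ \nu > 0 $ makes $ J $ strictly convex, which already yields uniqueness of any minimizer. For existence, take a minimizing sequence $ \{u_n\}\subset U_\text{ad} $; as $ U_\text{ad} $ is bounded, convex and closed, it is weakly sequentially compact, so along a subsequence $ u_n \rightharpoonup u $ with $ u \in U_\text{ad} $ (a closed convex set is weakly closed). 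Weak lower semicontinuity then gives $ J(u) \leqslant \liminf_n J(u_n) = \inf_{U_\text{ad}}J $, so $ u $ is the unique mild solution.

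Next I would derive the optimality system. Because $ J $ is convex and G\^ateaux differentiable and $ U_\text{ad} $ is convex, the minimizer $ u $ is characterized by the variational inequality $ J'(u)(v-u)\geqslant 0 $ for all $ v\in U_\text{ad} $. Writing $ y := Su $ and differentiating, one finds for each admissible direction $ v-u $ that
\begin{equation*}
  J'(u)(v-u) =
  \dual{(S(v-u))(T),\, y(T)-y_d}_\Omega +
  \nu\,\dual{u,\, v-u}_{\Omega\times(0,T)}.
\end{equation*}
The only step carrying genuine content is to transfer the first term onto the control via the adjoint. Since $ v-u \in L^q(0,T;L^2(\Omega)) $ with $ q > 1/\alpha $, Lemma \ref{lem:Sg-dual-weakly} applies with $ g = v-u $ and test function $ y(T)-y_d $, giving
\begin{equation*}
  \dual{(S(v-u))(T),\, y(T)-y_d}_\Omega =
  \dual{v-u,\, S^*\big((y(T)-y_d)\delta_T\big)}_{\Omega\times(0,T)}.
\end{equation*}
Setting $ p := S^*\big((y(T)-y_d)\delta_T\big) $ turns the variational inequality into precisely \eqref{eq:optim-u}, while \eqref{eq:optim-y} and \eqref{eq:optim-p} are just the definitions of $ y $ and $ p $.

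Since the theorem is flagged as routine, I do not expect a deep obstacle; the care lies in two standard verifications. The first is the weak sequential compactness of $ U_\text{ad} $ in the reflexive space $ L^q(0,T;L^2(\Omega)) $, which rests on $ U_\text{ad} $ being bounded (its pointwise box constraints give an $ L^\infty $, hence $ L^q $, bound on the finite-measure domain $ \Omega\times(0,T) $), convex and closed. The second is checking that every admissible direction meets the hypothesis $ q>1/\alpha $ of Lemma \ref{lem:Sg-dual-weakly} so that the adjoint identity may legitimately be invoked. Finally, the complex-valued pairing $ \dual{\cdot,\cdot} $ poses no difficulty here: because $ \mathcal A $, $ y_d $ and the box constraints are all real, the state $ y $, the adjoint $ p $ and every admissible direction are real-valued, so the pairings reduce to real integrals and \eqref{eq:optim-u} is a genuine real inequality.
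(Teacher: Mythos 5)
Your proposal is correct and follows essentially the same route as the paper, which itself only sketches the argument: it notes that $(S\cdot)(T)$ is bounded from $L^q(0,T;L^2(\Omega))$ to $L^2(\Omega)$ by \eqref{eq:Sg-C}, that $J$ is strictly convex and $U_\text{ad}$ is convex, bounded and closed, and then invokes Lemma \ref{lem:Sg-dual-weakly} together with the routine argument of \cite[Theorems 2.14 and 2.21]{Troltzsh2010}. Your write-up simply fills in that routine argument (direct method for existence, strict convexity for uniqueness, G\^ateaux derivative plus the duality lemma for the optimality system), including the same key step of applying Lemma \ref{lem:Sg-dual-weakly} to identify the adjoint state $p$.
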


\begin{remark} 
  Assume that $ u $, $ y $ and $ p $ are defined in Theorem \ref{thm:basic-regu}. By
  (\ref{eq:optim-u}) we have $ u = f(p) $, where
  \begin{small}
  \begin{equation}
    \label{eq:f}
    f(r) := \begin{cases}
      u_* & \text{ if } r > -\nu u_*, \\
      r & \text{ if } -\nu u^* \leqslant r \leqslant -\nu u_*, \\
      u^* & \text{ if } r < -\nu u^*.
    \end{cases}
  \end{equation}
  \end{small}
  Noting that $ f $ is Lipschitz continuous with Lipschitz constant $ 1/\nu $,
  we obtain
  \begin{small}
  \[
    u'(t) = f'(p(t)) p'(t) \text{ in } L^2(\Omega),
    \quad \text{a.e.}~0 < t < T,
  \]
  \end{small}
  and hence $ \nm{u'(t)}_{L^2(\Omega)} \leqslant \nu^{-1}
  \nm{p'(t)}_{L^2(\Omega)} $, a.e.~$ 0 < t < T $. It follows from
  (\ref{eq:optim-p}), \eqref{eq:S*delta} and \eqref{eq:E*'} that
  \[
    \nm{u'(t)}_{L^2(\Omega)} \leqslant
    C_{\omega_0,\mathcal M_0} \nu^{-1} (T-t)^{\alpha-2}
    (\nm{y(T)}_{L^2(\Omega)} + \nm{y_d}_{L^2(\Omega)}),
    \quad\text{a.e.}~0 < t < T.
  \]
  Since (\ref{eq:optim-y}), \eqref{eq:Sg-l1}, \eqref{eq:E} and the fact $ u \in
  U_\text{ad} $ imply
  \begin{equation}
    \label{eq:yT}
    \nm{y(T)}_{L^2(\Omega)} \leqslant
    C_{u_*,u^*,\omega_0,\mathcal M_0,T,\Omega}
    \alpha^{-1},
  \end{equation}
  we conclude therefore that
  \begin{small}
  \begin{equation}
    \label{eq:u}
    \nm{u'(t)}_{L^2(\Omega)} \leqslant
    C_{u_*,u^*,\omega_0,\mathcal M_0,T,\Omega} \nu^{-1}
    (T-t)^{\alpha-2}(\alpha^{-1} + \nm{y_d}_{L^2(\Omega)}),
    \quad\text{a.e.}~0 < t < T.
  \end{equation}
  \end{small}
\end{remark}
\begin{remark}
  \label{rem:nu=0}
  Let $ u_\nu $ be the mild solution of problem \eqref{eq:mild_sol}. A standard
  argument yields that there exits $ y_T \in L^2(\Omega) $ such that
  \begin{equation}
    \label{eq:731}
    \nm{(Su_\nu)(T)-y_T}_{L^2(\Omega)} \leqslant
    C_{u_*,u^*,T,\Omega} \sqrt\nu.
  \end{equation}
  Since $ U_\text{ad} $ is a convex, bounded and closed subset of $
  L^q(0,T;L^2(\Omega)) $, $ q > 1/\alpha $, there exist $ u_0 \in U_\text{ad} $
  and a decreasing sequence $ \{\nu_n\}_{n=0}^\infty \subset (0,\infty) $ with
  limit zero such that
  \[
    \lim_{n \to \infty} u_{\nu_n} = u_0 \quad\text{ weakly in }
    L^q(0,T;L^2(\Omega)).
  \]
  As $ (S \cdot)(T) $ is a bounded linear operator from $ L^q(0,T;L^2(\Omega)) $
  to $ L^2(\Omega) $, we have that $ (Su_{\nu_n})(T) $ converges to $ (Su_0)(T)
  $ weakly in $ L^2(\Omega) $ as $ n \to \infty $, so that \eqref{eq:731} implies
  $ (Su_0)(T) = y_T $. Furthermore, a trivial calculation yields that $ u_0 $ is
  a mild solution of problem \eqref{eq:inverse} with $ \nu=0 $.
\end{remark}

\subsection{Temporally discrete problem}
\label{ssec:discr}
Define
\[
  W_\tau := \{
    V \in L^\infty(0,T; H_0^1(\Omega)):\,
    V \text{ is constant on } (t_{j-1},t_j)
    \quad \forall 1 \leqslant j \leqslant J
  \}.
\]
For any $ g \in W_\tau^* $, define $ S_\tau g \in W_{\tau} $ and $ S_{\tau}^*g
\in W_{\tau} $, respectively, by that
\begin{align}
  \dual{ \, \D_{0+}^\alpha S_\tau g, V}_{\Omega \times (0,T)} -
  \dual{\mathcal AS_\tau g, V}_{L^2(0,T; H_0^1(\Omega))} =
  \dual{g,V}_{W_\tau}, \label{eq:Stau} \\
  \dual{( \, \D_{T-}^\alpha S_\tau^* g, V}_{\Omega \times (0,T)} -
  \dual{\mathcal A^*S_\tau^* g, V}_{L^2(0,T; H_0^1(\Omega))} =
  \dual{g, V}_{W_\tau},
  \label{eq:S*tau}
\end{align}
for all $ V \in W_{\tau} $. By \eqref{eq:dual} we have that
\begin{equation}
  \label{eq:Stau-dual}
  \dual{S_{\tau}f, g}_{\Omega \times (0,T)} =
  \dual{f, S_{\tau}^*g}_{\Omega \times (0,T)}
  \quad \forall f, g \in L^1(0,T;L^2(\Omega)).
\end{equation}
A direct calculation yields that (cf.~\cite[Remark 3]{Jin-maximal2018}), for any
$ g \in W_\tau^* $,
\begin{equation}
  \label{eq:Stau-Stauj}
  (S_\tau g)(t_j-) = S_{\tau, j} g \quad \forall 1 \leqslant j \leqslant J.
\end{equation}
Hence, from Theorem \ref{thm:conv-Stau}, we readily conclude the following two
estimates: for any $ g \in L^\infty(0,T;L^2(\Omega)) $,
\begin{equation}
  \label{eq:S-Stau-g-2}
  \nm{(Sg)(T) - (S_\tau g)(T-)}_{L^2(\Omega)}
  \leqslant C_{\omega_0, \mathcal M_0} \tau^\alpha
  \Big(
    \frac1\alpha + \frac{1-J^{\alpha-1}}{1-\alpha}
  \Big) \nm{g}_{L^\infty(0,T;L^2(\Omega))};
\end{equation}
for any $ v \in L^2(\Omega) $,
\begin{equation}
  \label{eq:S-Stau-delta}
  \nm{S(v\delta_0) - S_\tau(v\widehat\delta_0)}_{L^1(0,T;L^2(\Omega))}
  \leqslant C_{\omega_0,\mathcal M_0} \tau^\alpha
  \Big(
    \frac1\alpha + \frac{1-J^{\alpha-1}}{1-\alpha}
  \Big) \nm{v}_{L^2(\Omega)}.
\end{equation}
Furthermore, we have the following stability estimate.
\begin{lemma}
  \label{lem:Stau-stab-infty}
  Assume that $ g \in {}_0H^{-\alpha/2}(0,T;L^2(\Omega)) $. Then, for any $ 1
  \leqslant j \leqslant J $,
  \begin{equation}
    \label{eq:Stau-stab-infty}
    \nm{(S_\tau g)(t_j-)}_{L^2(\Omega)} \leqslant
    C_\alpha \tau^{(\alpha-1)/2}
    \nm{g}_{{}_0H^{-\alpha/2}(0,T;L^2(\Omega))}.
  \end{equation}
\end{lemma}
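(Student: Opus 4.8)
The plan is to run an energy argument in the fractional Sobolev norm $ {}_0H^{\alpha/2}(0,T;L^2(\Omega)) $ and then convert that norm bound into a bound on the nodal values by an inverse inequality. Since $ (S_\tau g)(t_j-) = S_{\tau,j}g $ by \eqref{eq:Stau-Stauj}, it suffices to estimate $ \max_{1\leqslant j\leqslant J}\nm{(S_\tau g)(t_j-)}_{L^2(\Omega)} $, i.e.\ the $ L^\infty(0,T;L^2(\Omega)) $ norm of the piecewise-constant function $ S_\tau g\in W_\tau $.

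First I would establish coercivity of the Riemann--Liouville operator: there is $ C_\alpha>0 $ with
\[
  \Re\dual{\D_{0+}^\alpha V, V}_{\Omega\times(0,T)}
  \geqslant C_\alpha \nm{V}_{{}_0H^{\alpha/2}(0,T;L^2(\Omega))}^2
  \quad\forall V\in W_\tau .
\]
This rests on the fractional integration-by-parts identity $ \dual{\D_{0+}^\alpha V,V} = \dual{\D_{0+}^{\alpha/2}V,\D_{T-}^{\alpha/2}V} $, on the fact that $ \D_{0+}^{\alpha/2} $ and $ \D_{T-}^{\alpha/2} $ differ by a phase of size $ \pi\alpha/2 $ (which produces the factor $ \cos(\pi\alpha/2) $), and on the norm equivalence $ \nm{\D_{0+}^{\alpha/2}V}_{L^2}\sim\nm{V}_{{}_0H^{\alpha/2}} $. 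The latter is valid precisely because $ 0<\alpha/2<1/2 $; in this range $ W_\tau\subset{}_0H^{\alpha/2}(0,T;L^2(\Omega)) $ even though its members need not vanish at $ t=0 $. The constant degenerates as $ \alpha\to 1- $, but that is harmless here since the lemma only claims an $ \alpha $-dependent constant $ C_\alpha $.

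Next I would take $ V=S_\tau g $ in \eqref{eq:Stau}. The dissipativity \eqref{eq:A-positive} gives $ \Re\dual{\mathcal AS_\tau g,S_\tau g}\leqslant 0 $, so combining with the coercivity above and bounding the right-hand side by the $ {}_0H^{-\alpha/2} $--$ {}_0H^{\alpha/2} $ duality,
\[
  C_\alpha\nm{S_\tau g}_{{}_0H^{\alpha/2}}^2
  \leqslant \Re\dual{g,S_\tau g}_{W_\tau}
  \leqslant \nm{g}_{{}_0H^{-\alpha/2}}\,\nm{S_\tau g}_{{}_0H^{\alpha/2}},
\]
whence $ \nm{S_\tau g}_{{}_0H^{\alpha/2}(0,T;L^2(\Omega))}\leqslant C_\alpha\nm{g}_{{}_0H^{-\alpha/2}(0,T;L^2(\Omega))} $. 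Finally, since $ S_\tau g $ is piecewise constant on the uniform grid of size $ \tau $, I would invoke the inverse inequality $ \max_{1\leqslant j\leqslant J}\nm{V(t_j-)}_{L^2(\Omega)}\leqslant C_\alpha\tau^{(\alpha-1)/2}\nm{V}_{{}_0H^{\alpha/2}(0,T;L^2(\Omega))} $ for $ V\in W_\tau $; chaining it with the energy bound gives \eqref{eq:Stau-stab-infty}. The power of $ \tau $ is dictated by scaling, since the $ H^{\alpha/2} $-seminorm of a unit jump across one grid node scales like $ \tau^{(1-\alpha)/2} $, so recovering a nodal value costs a factor $ \tau^{(\alpha-1)/2} $.

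I expect the inverse inequality to be the main obstacle. The crude estimate $ \nm{V}_{L^\infty(0,T;L^2)}\leqslant\tau^{-1/2}\nm{V}_{L^2}\leqslant\tau^{-1/2}\nm{V}_{{}_0H^{\alpha/2}} $ only yields the exponent $ -1/2 $, which is strictly worse than the required $ (\alpha-1)/2 $; one must genuinely exploit the extra $ H^{\alpha/2} $ smoothness to gain the factor $ \tau^{\alpha/2} $. The difficulty is that the Gagliardo double integral couples all the jumps simultaneously, so a naive single-interval scaling argument does not suffice: one has to verify that the constant stays uniform in the number of subintervals $ J $ (equivalently in $ \tau $), which requires either a careful summation over all nodes or a Fourier-multiplier computation on the uniform grid.
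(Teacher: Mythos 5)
Your first half coincides with the paper's own proof: testing \eqref{eq:Stau} with $V=S_\tau g$ and combining \eqref{eq:dual}, \eqref{eq:coer} and \eqref{eq:A-positive} gives the energy bound $\nm{\D_{0+}^{\alpha/2}S_\tau g}_{L^2(0,T;L^2(\Omega))}\leqslant C_\alpha\nm{g}_{{}_0H^{-\alpha/2}(0,T;L^2(\Omega))}$, which is exactly the estimate the paper uses. The genuine gap is in the second half. The inverse inequality
\[
  \max_{1\leqslant j\leqslant J}\nm{V(t_j-)}_{L^2(\Omega)}
  \leqslant C_\alpha\,\tau^{(\alpha-1)/2}\nm{V}_{{}_0H^{\alpha/2}(0,T;L^2(\Omega))},
  \qquad V\in W_\tau,
\]
is not an off-the-shelf fact for fractional norms; once the energy bound is in hand, this inequality \emph{is} the content of the lemma, and you do not prove it --- you explicitly defer it to ``a careful summation over all nodes or a Fourier-multiplier computation.'' As written, the proposal is therefore incomplete precisely at its crux, even though the inequality itself is true and your scaling heuristic identifies the correct exponent.

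The missing idea, which is how the paper closes this step, is a localization-by-duality argument that reduces everything to one explicit integral. With $v:=(S_\tau g)(T-)$, piecewise constancy of $S_\tau g$ gives $\nm{v}_{L^2(\Omega)}^2=\dual{v\widehat\delta_T,\,S_\tau g}_{\Omega\times(0,T)}$, where $\widehat\delta_T$ (cf.~\eqref{eq:delta_T}) equals $\tau^{-1}$ on $(t_{J-1},T)$ and $0$ elsewhere. Writing $v\widehat\delta_T=\D_{T-}^{\alpha/2}\D_{T-}^{-\alpha/2}(v\widehat\delta_T)$ and transposing via \eqref{eq:dual},
\[
  \nm{v}_{L^2(\Omega)}^2
  =\Dual{\D_{T-}^{-\alpha/2}(v\widehat\delta_T),\,\D_{0+}^{\alpha/2}S_\tau g}_{\Omega\times(0,T)}
  \leqslant \nm{\D_{0+}^{\alpha/2}S_\tau g}_{L^2(0,T;L^2(\Omega))}\,
  \nm{\D_{T-}^{-\alpha/2}(v\widehat\delta_T)}_{L^2(0,T;L^2(\Omega))},
\]
so the whole lemma follows from the single bound $\nm{\D_{T-}^{-\alpha/2}(v\widehat\delta_T)}_{L^2(0,T;L^2(\Omega))}\leqslant C_\alpha\tau^{(\alpha-1)/2}\nm{v}_{L^2(\Omega)}$. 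The paper verifies this by elementary one-dimensional calculus: splitting the time integral at $T-\tau$ into $\mathbb I_1+\mathbb I_2$, both pieces are computed explicitly to be at most $C_\alpha\tau^{1+\alpha}$, and the prefactor $\tau^{-2}$ then yields $\tau^{\alpha-1}$. Because the discrete delta $\widehat\delta_T$ is supported in a single cell, only one jump ever enters the computation; no summation over nodes, no uniformity-in-$J$ issue, and no Fourier analysis arise. Supplying this duality step (and its obvious analogue at the interior nodes $t_j$) is what separates your outline from a complete proof.
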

\begin{proof}
  We only prove \eqref{eq:Stau-stab-infty} with $ j=J $, since the other cases $
  1 \leqslant j < J $ can be proved analogously. Let $ v:= (S_\tau g)(t_j-) $.
  We have
  \begin{align*}
    & \nm{v}_{L^2(\Omega)}^2 =
    \dual{v\widehat\delta_T, S_\tau g}_{\Omega \times (0,T)} \\
    ={} & \dual{
      \D_{T-}^{\alpha/2} \D_{T-}^{-\alpha/2} (v\widehat\delta_T),
      S_\tau g
    }_{\Omega \times (0,T)} \\
    ={} &
    \dual{
      \D_{T-}^{-\alpha/2}(v\widehat\delta_T),
      \D_{0+}^{\alpha/2} S_\tau g
    }_{\Omega \times (0,T)} \quad\text{(by \eqref{eq:dual})} \\
    \leqslant{} & \nm{\D_{0+}^{\alpha/2} S_\tau g}_{L^2(0,T;L^2(\Omega))}
    \nm{\D_{T-}^{-\alpha/2}(v\widehat\delta_T)}_{L^2(0,T;L^2(\Omega))},
  \end{align*}
  where we recall that $ \widehat\delta_T $ is defined by \eqref{eq:delta_T}.
  Since inserting $ V:= S_\tau g $ into \eqref{eq:Stau} yields, by
  \eqref{eq:dual}, \eqref{eq:coer} and \eqref{eq:A-positive}, that
  \[
    \nm{\D_{0+}^{\alpha/2}S_\tau g}_{L^2(0,T;L^2(\Omega))}
    \leqslant C_{\alpha} \nm{g}_{{}_0H^{-\alpha/2}(0,T;L^2(\Omega))},
  \]
  it follows that
  \[
    \nm{v}_{L^2(\Omega)}^2
    \leqslant
    C_\alpha
    \nm{g}_{{}_0H^{-\alpha/2}(0,T;L^2(\Omega))}
    \nm{\D_{T-}^{-\alpha/2}(v\widehat\delta_T)}_{L^2(\Omega)}.
  \]
  It suffices, therefore, to prove
  \begin{equation}
    \label{eq:frac-z}
    \nm{\D_{T-}^{-\alpha/2}(v\widehat\delta_T)}_{L^2(0,T;L^2(\Omega))}
    \leqslant C_\alpha \tau^{(\alpha-1)/2} \nm{v}_{L^2(\Omega)}.
  \end{equation}

  To this end, we note that
  \begin{align*}
    & \nm{\D_{T-}^{-\alpha/2}(v\widehat\delta_T)}_{L^2(0,T;L^2(\Omega))}^2 \\
    ={}& \left(
      \frac{\nm{v}_{L^2(\Omega)}}{\Gamma(\alpha/2)}
    \right)^2 \tau^{-2} \int_0^T \Snm{
      \int_t^T (s-t)^{\alpha/2-1} \widehat\delta_T(s) \, \mathrm{d}s
    }^2 \, \mathrm{d}t \\
    ={}& \left(
      \frac{\nm{v}_{L^2(\Omega)}}{\Gamma(\alpha/2)}
    \right)^2 \tau^{-2} (\mathbb I_1 + \mathbb I_2),
  \end{align*}
  where
  \begin{small}
  \begin{align*}
    \mathbb I_1 &:= \int_0^{T-\tau}  \Snm{
      \int_{T-\tau}^T (s-t)^{\alpha/2-1} \, \mathrm{d}s
    }^2 \, \mathrm{d}s \, \mathrm{d}t, \\
    \mathbb I_2 &:= \int_{T-\tau}^T \Snm{
      \int_t^T (s-t)^{\alpha/2-1} \, \mathrm{d}s
    }^2 \, \mathrm{d}t.
  \end{align*}
  \end{small}
  A straightforward calculation gives
  \begin{align*}
    \mathbb I_1 &= 4/\alpha^2 \int_0^{T-\tau} \big(
      (T-t)^{\alpha/2} - (T-\tau-t)^{\alpha/2}
    \big)^2 \, \mathrm{d}t \\
    &= 4/\alpha^2 \tau^{1+\alpha}
    \int_0^{T/\tau} \big(
      s^{\alpha/2} - (s-1)^{\alpha/2}
    \big)^2 \, \mathrm{d}s \\
    &< 4/\alpha^2 \tau^{1+\alpha}
    \int_0^\infty \big(
      s^{\alpha/2} - (s-1)^{\alpha/2}
    \big)^2 \, \mathrm{d}s  = C_\alpha \tau^{1+\alpha}
  \end{align*}
  and
  \begin{align*}
    \mathbb I_2 = 4/\alpha^2 \int_{T-\tau}^T
    (T-t)^\alpha \, \mathrm{d}t =
    C_\alpha \tau^{1+\alpha}.
  \end{align*}
  Combining the above estimates of $ \mathbb I_1 $ and $ \mathbb I_2 $ proves
  \eqref{eq:frac-z} and hence this lemma.
\end{proof}

\begin{remark}
  We note that if the temporal grid is nonuniform, then \eqref{eq:Stau} is not
  equivalent to the L1 scheme for fractional diffusion equations. For the
  numerical analysis of \eqref{eq:Stau} with nonuniform temporal grid, we refer
  the reader to \cite{Li2019SIAM,Li-Wang-Xie2020}.
\end{remark}


Following the idea in \cite{Hinze2005}, we consider the following temporally
discrete problem:
\begin{equation}
  \label{eq:numer_opti}
  \min\limits_{U \in U_{\text{ad}}} J_{\tau}(U) :=
  \frac12 \nm{ ( S_{\tau}U)(T-) - y_d }_{L^2(\Omega)}^2 +
  \frac\nu2 \nm{U}_{L^2(0,T;L^2(\Omega))}^2.
\end{equation}
Note that $ U_\text{ad} $ is a convex, bounded and closed subset of $
L^2(0,T;L^2(\Omega)) $ and that $ (S_\tau\cdot)(T-) $ is, by
\eqref{eq:Stau-stab-infty}, a bounded linear operator from $ L^2(0,T;L^2(\Omega))
$ to $ L^2(\Omega) $. Hence, applying \cite[Theorems 2.14 and
2.21]{Troltzsh2010} to problem \eqref{eq:numer_opti} yields the following
theorem.
\begin{theorem}
  \label{thm:regu-U}
  Problem \eqref{eq:numer_opti} admits a unique solution $ U \in U_\text{ad} $,
  and the following optimality condition holds:
  \begin{subequations}
  \begin{numcases}{}
    Y = S_\tau U, \label{eq:optim-Y} \\
    P = S_\tau^*\big( (Y(T-)-y_d)\widehat\delta_T \big), \label{eq:optim-P} \\
    \Dual{P + \nu U, V-U}_{\Omega \times (0,T)}
    \geqslant 0 \quad \text{ for all } V \in U_\text{ad},
    \label{eq:optim-U}
  \end{numcases}
  \end{subequations}
  where $ \widehat\delta_T $ is defined by \eqref{eq:delta_T}.
\end{theorem}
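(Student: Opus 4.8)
The plan is to recognize problem \eqref{eq:numer_opti} as a standard linear-quadratic minimization over a convex admissible set and to apply the abstract theory of \cite[Theorems 2.14 and 2.21]{Troltzsh2010}, the only genuine work being the explicit identification of the adjoint operator that produces $ P $. First I would introduce the bounded linear operator $ G \colon L^2(0,T;L^2(\Omega)) \to L^2(\Omega) $ defined by $ GU := (S_\tau U)(T-) $, whose boundedness is precisely the stability estimate \eqref{eq:Stau-stab-infty}. With this notation $ J_\tau(U) = \tfrac12 \nm{GU - y_d}_{L^2(\Omega)}^2 + \tfrac\nu2 \nm{U}_{L^2(0,T;L^2(\Omega))}^2 $ is a continuous and, thanks to $ \nu > 0 $, strictly convex functional on the Hilbert space $ L^2(0,T;L^2(\Omega)) $, while $ U_\text{ad} $ is a nonempty, convex, bounded and closed, hence weakly sequentially compact, subset.

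For existence and uniqueness I would invoke the direct method exactly as in \cite[Theorem 2.14]{Troltzsh2010}: since $ J_\tau $ is convex and continuous it is weakly lower semicontinuous, so a minimizing sequence in the weakly compact set $ U_\text{ad} $ admits a weakly convergent subsequence whose limit minimizes $ J_\tau $, and strict convexity then forces uniqueness of the minimizer $ U $.

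The substantive step is the optimality condition. By convexity and Gâteaux differentiability, \cite[Theorem 2.21]{Troltzsh2010} characterizes the minimizer through the variational inequality $ \dual{J_\tau'(U), V - U}_{\Omega \times (0,T)} \geqslant 0 $ for all $ V \in U_\text{ad} $, and a direct computation gives $ J_\tau'(U) = G^*(GU - y_d) + \nu U $. It therefore remains to identify $ G^* $. The key observation, which is the discrete analogue of Lemma \ref{lem:Sg-dual-weakly}, is that for every $ w \in L^2(\Omega) $ and every $ V \in W_\tau $ the definition \eqref{eq:delta_T} of $ \widehat\delta_T $ gives
\[
  \dual{V, w\widehat\delta_T}_{\Omega \times (0,T)}
  = \tau^{-1} \int_{t_{J-1}}^T \dual{V(t), w}_\Omega \, \mathrm{d}t
  = \dual{V(T-), w}_\Omega,
\]
because $ V $ is constant and equal to $ V(T-) $ on $ (t_{J-1}, T) $. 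Applying this with $ V = S_\tau f $ and then using the discrete duality relation \eqref{eq:Stau-dual} yields $ \dual{Gf, w}_\Omega = \dual{S_\tau f, w\widehat\delta_T}_{\Omega \times (0,T)} = \dual{f, S_\tau^*(w\widehat\delta_T)}_{\Omega \times (0,T)} $, so that $ G^* w = S_\tau^*(w\widehat\delta_T) $. Setting $ Y := S_\tau U $ and $ w := Y(T-) - y_d $ then gives $ G^*(GU - y_d) = S_\tau^*\big((Y(T-)-y_d)\widehat\delta_T\big) = P $, and the variational inequality becomes exactly \eqref{eq:optim-U}, with \eqref{eq:optim-Y} and \eqref{eq:optim-P} recording the definitions of $ Y $ and $ P $.

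I expect the identification $ G^* w = S_\tau^*(w\widehat\delta_T) $ to be the main obstacle: it requires recognizing the endpoint evaluation $ V \mapsto V(T-) $ on $ W_\tau $ as integration against the discrete Dirac weight $ \widehat\delta_T $, and then transferring it through the operator-level duality \eqref{eq:Stau-dual}. Once this adjoint formula is in place, every remaining ingredient is routine convex analysis supplied by the cited abstract theorems.
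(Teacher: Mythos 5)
Your proposal is correct and follows essentially the same route as the paper: the paper's proof consists precisely of noting that $U_\text{ad}$ is convex, bounded and closed in $L^2(0,T;L^2(\Omega))$, that $(S_\tau\cdot)(T-)$ is a bounded linear operator into $L^2(\Omega)$ by \eqref{eq:Stau-stab-infty}, and then invoking \cite[Theorems 2.14 and 2.21]{Troltzsh2010}. The only difference is that you spell out the adjoint identification $G^*w = S_\tau^*(w\widehat\delta_T)$ via \eqref{eq:delta_T} and \eqref{eq:Stau-dual}, which the paper leaves implicit in its citation of the abstract theorems; your computation of it is correct.
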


\begin{theorem} 
  \label{thm:conv}
  Let $ u $ and $ y $ be defined in Theorem \ref{thm:basic-regu}, and let $ U $ and $ Y
  $ be defined in Theorem \ref{thm:regu-U}. Then
  \begin{equation} 
    \label{eq:conv}
    \begin{aligned}
      & \nm{(y-Y)(T-)}_{L^2(\Omega)} +
      \sqrt\nu \nm{u-U}_{L^2(0,T;L^2(\Omega))} \\
      \leqslant{} &
      C_{y_d,u_*,u^*,\omega_0,\mathcal M_0,T,\Omega}
      \left(
        \frac1\alpha + \left(\frac{1-J^{\alpha-1}}{1-\alpha}\right)^{1/2} +
        \frac{1-J^{\alpha-1}}{1-\alpha} \tau^{\alpha/2}
      \right) \tau^{\alpha/2}.
    \end{aligned}
  \end{equation}
\end{theorem}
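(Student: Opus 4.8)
The plan is to combine the two first-order optimality systems of Theorems \ref{thm:basic-regu} and \ref{thm:regu-U} with the adjoint duality identities so as to isolate a single coercive quantity controlling both $\nm{(y-Y)(T-)}_{L^2(\Omega)}$ and $\sqrt\nu\,\nm{u-U}_{L^2(0,T;L^2(\Omega))}$. First I would test the continuous variational inequality \eqref{eq:optim-u} with the admissible element $v=U$ and the discrete one \eqref{eq:optim-U} with $V=u$, and add the two. The terms $\nu\dual{u,U-u}+\nu\dual{U,u-U}$ collapse to $-\nu\nm{u-U}^2$, leaving the basic estimate $\nu\nm{u-U}_{L^2(0,T;L^2(\Omega))}^2 \leqslant \dual{P-p, u-U}_{\Omega\times(0,T)}$.

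The heart of the argument is to rewrite the right-hand side. Lemma \ref{lem:Sg-dual-weakly} with $g=u-U$, $v=y(T)-y_d$ gives $\dual{p,u-U}_{\Omega\times(0,T)}=\dual{y(T)-y_d,(S(u-U))(T)}_\Omega$, while the discrete duality \eqref{eq:Stau-dual}, the piecewise-constancy of $S_\tau(u-U)\in W_\tau$, and the definition \eqref{eq:delta_T} of $\widehat\delta_T$ (so that the $\widehat\delta_T$-pairing evaluates the last-interval value) give $\dual{P,u-U}_{\Omega\times(0,T)}=\dual{Y(T-)-y_d,(S_\tau(u-U))(T-)}_\Omega$. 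Writing $a:=y(T)-y_d$, $e:=(y-Y)(T-)$ and, for a control $g$, $\delta_g:=(Sg)(T)-(S_\tau g)(T-)$, and invoking $y=Su$, $Y=S_\tau U$ together with the linearity relations $(S_\tau(u-U))(T-)=e-\delta_u$ and $(S(u-U))(T)=e-\delta_U$ (whence $\delta_{u-U}=\delta_u-\delta_U$), a direct computation collapses the difference to
\[
  \dual{P-p,u-U}_{\Omega\times(0,T)} = -\dual{a,\delta_{u-U}}_\Omega - \nm{e}_{L^2(\Omega)}^2 + \dual{e,\delta_u}_\Omega .
\]
The crucial feature is the genuinely negative term $-\nm{e}^2$; moving it to the left yields the coupled coercive bound $\nu\nm{u-U}^2+\nm{e}^2 \leqslant -\dual{a,\delta_{u-U}}_\Omega+\dual{e,\delta_u}_\Omega$.

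It remains to estimate the two data pairings. On $\dual{e,\delta_u}_\Omega$ I would use Young's inequality to absorb $\tfrac12\nm{e}^2$ into the left, keeping $\tfrac12\nm{\delta_u}^2$. I would bound $\nm{a}\leqslant C(\alpha^{-1}+\nm{y_d})\leqslant C_{y_d}\alpha^{-1}$ via \eqref{eq:yT}, and bound $\nm{\delta_u}$ and $\nm{\delta_{u-U}}$ by $C\tau^\alpha(\tfrac1\alpha+\tfrac{1-J^{\alpha-1}}{1-\alpha})$ via \eqref{eq:S-Stau-g-2}, since $u$ and $u-U$ lie in $U_{\text{ad}}$ and hence are bounded in $L^\infty(0,T;L^2(\Omega))$ by constants depending only on $u_*,u^*,\Omega$. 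Writing $B:=\tfrac{1-J^{\alpha-1}}{1-\alpha}$, this gives $\nu\nm{u-U}^2+\tfrac12\nm{e}^2 \leqslant C\big[(\alpha^{-2}+\alpha^{-1}B)\tau^\alpha + (\alpha^{-1}+B)^2\tau^{2\alpha}\big]$, and taking square roots produces a bound of the form $C[\alpha^{-1}\tau^{\alpha/2}+\alpha^{-1/2}B^{1/2}\tau^{\alpha/2}+(\alpha^{-1}+B)\tau^\alpha]$.

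The last, and most delicate, step is the $\alpha$- and $J$-robust bookkeeping needed to recast this into the stated form $(\tfrac1\alpha+B^{1/2}+B\tau^{\alpha/2})\tau^{\alpha/2}$. The one term not immediately of the advertised shape is $\alpha^{-1/2}B^{1/2}\tau^{\alpha/2}$; here I expect the key observation to be that $1-J^{\alpha-1}<1$ forces $B<(1-\alpha)^{-1}$, whence a short computation gives $\alpha^{-1/2}B^{1/2}\leqslant \alpha^{-1}+B^{1/2}$, so this term is reabsorbed into the first two advertised terms, while $\tau^\alpha\leqslant\tau^{\alpha/2}$ handles the remainder. The two main obstacles are therefore (i) executing the algebraic collapse so that $-\nm{e}^2$ emerges with the correct sign, which hinges on using the continuous and discrete dualities consistently and on evaluating the $\widehat\delta_T$-pairing at $t=T-$; and (ii) the $\alpha$-robust manipulation at the end, where the interplay between the $\alpha^{-1}$ growth of $\nm{y(T)}$ and the $(\alpha^{-1}+B)$ growth of the final-time state-discretization error must be balanced precisely through the bound $B<(1-\alpha)^{-1}$.
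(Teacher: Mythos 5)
Your proposal is correct, and its skeleton (adding the two variational inequalities, passing adjoint pairings through duality, reducing everything to state-discretization errors, then $\alpha$-robust bookkeeping) is the same as the paper's; but the treatment of the term involving $y(T)-y_d$ is genuinely different, and in a useful way. The paper splits $\dual{p-P,\,U-u}_{\Omega\times(0,T)}$ into $\mathbb I_1+\mathbb I_2$ with $\mathbb I_1=\dual{S^*((y(T)-y_d)\delta_T)-S_\tau^*((y(T)-y_d)\widehat\delta_T),\,U-u}_{\Omega\times(0,T)}$, keeps $\mathbb I_1$ on the adjoint side, and bounds it by the $L^1(0,T;L^2(\Omega))$ norm of the adjoint consistency error, which requires the symmetric (adjoint) version of the Dirac-data estimate \eqref{eq:S-Stau-delta}, i.e.\ of \eqref{eq:S-Stau-vdelta-l1}. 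You instead push \emph{both} $\dual{p,u-U}$ and $\dual{P,u-U}$ to the primal side via Lemma \ref{lem:Sg-dual-weakly} and \eqref{eq:Stau-dual}, so that the same term becomes $-\dual{y(T)-y_d,\,\delta_{u-U}}_\Omega$ with $\delta_{u-U}$ a primal final-time error controlled by \eqref{eq:S-Stau-g-2} alone; indeed your collapsed identity $\dual{P-p,u-U}=-\dual{a,\delta_{u-U}}-\nm{e}^2+\dual{e,\delta_u}$ is exactly what the paper's $\mathbb I_1+\mathbb I_2$ equals after its own duality manipulations of $\mathbb I_2$. This buys two things: (i) the Dirac-data estimates of Theorem \ref{thm:conv-Stau} and their adjoint analogues (which the paper invokes without detailed proof) are never needed, only \eqref{eq:S-Stau-g-2} for $L^\infty$ data; (ii) the coercive term $-\nm{e}^2$ emerges exactly, so you avoid the paper's extra step converting $-\tfrac12\nm{(S_\tau(u-U))(T-)}^2$ into $-\tfrac12\nm{(Su)(T)-(S_\tau U)(T-)}^2$, a step which as written silently uses $\nm{a+b}^2\leqslant\nm{a}^2+\nm{b}^2$ and is only valid after adjusting constants. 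Two harmless slips on your side: $u-U$ does \emph{not} lie in $U_\text{ad}$ (what you actually need, and have, is $\nm{u-U}_{L^\infty(0,T;L^2(\Omega))}\leqslant (u^*-u_*)\snm{\Omega}^{1/2}$, or simply apply \eqref{eq:S-Stau-g-2} to $u$ and $U$ separately and use $\delta_{u-U}=\delta_u-\delta_U$); and $\tau^\alpha\leqslant\tau^{\alpha/2}$ needs $\tau\leqslant1$, otherwise absorb $\tau^{\alpha/2}\leqslant T^{\alpha/2}$ into the $T$-dependent constant. Your closing bookkeeping inequality $\alpha^{-1/2}B^{1/2}\leqslant C(\alpha^{-1}+B^{1/2})$ is indeed a consequence of $B<(1-\alpha)^{-1}$ (split into the cases $\alpha\geqslant1/2$, where $\alpha^{-1/2}\leqslant\sqrt2$, and $\alpha<1/2$, where $B<2$), and is of the same nature as the bound $B/\alpha\leqslant C(\alpha^{-2}+B)$ that the paper itself needs to reach \eqref{eq:eve-1}.
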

\begin{proof} 
  Since the idea of of this proof is standard (cf.~\cite[Theorem
  3.4]{Hinze2009}), we only provide a brief proof. Let us first prove that
  \begin{align}
    & \nm{(Su)(T) - (S_\tau U)(T-)}_{L^2(\Omega)} +
    \sqrt\nu \nm{u-U}_{L^2(0,T;L^2(\Omega))} \notag \\
    \leqslant{} &
    C_{u_*,u^*,\Omega} \nm{
      S^*((y(T)-y_d)\delta_T) -
      S_{\tau}^*((y(T)-y_d)\widehat\delta_T)
    }_{L^1(0,T;L^2(\Omega))}^{1/2} \notag \\
    & \quad {} + 2\nm{(Su)(T) - (S_{\tau}u)(T-)}_{L^2(\Omega)}.
    \label{eq:ava}
  \end{align}
  By \eqref{eq:optim-u} and \eqref{eq:optim-U}, we have
  \begin{align*} 
    \Dual{
      S^*\big( (y(T) - y_d)\delta_T \big) + \nu u, U-u
    }_{\Omega \times (0,T)} \geqslant 0,\\
    \Dual{
      S_{\tau}^* \big( ( Y(T-) - y_d) \widehat\delta_T \big) + \nu U, u-U
    }_{\Omega \times (0,T)} \geqslant 0,
  \end{align*}
  so that
  \begin{equation}
    \label{eq:I1+I2}
    \nu \nm{u-U}_{L^2(0,T;L^2(\Omega))}^2
    \leqslant \mathbb I_1 + \mathbb I_2,
  \end{equation}
  where
  \begin{align*} 
    \mathbb I_1 &:= \dual{
      S^* \big( (y(T)-y_d)\delta_T \big) -
      S_{\tau}^* \big( y(T)-y_d)\widehat\delta_T \big),
      U-u
    }_{\Omega \times (0,T)}, \\
    \mathbb I_2 &:=
    \dual{
      S_{\tau}^* \big( (y(T)  - Y(T-))\widehat\delta_T \big),
      U-u
    }_{\Omega \times (0,T)}.
  \end{align*}
  It is clear that
  \[
    \mathbb I_1 \leqslant C_{u_*,u^*,\Omega}
    \nm{
      S^*((y(T)-y_d)\delta_T) -
      S_\tau^*((y(T)-y_d)\widehat\delta_T)
    }_{L^1(0,T;L^2(\Omega))},
  \]
  by the fact that $ u, U \in U_\text{ad} $. A straightforward computation
  yields
  \begin{align*}
    \mathbb I_2 & = \dual{
      (y(T)  - Y(T-))\widehat\delta_T,
      S_\tau(U-u)
    }_{\Omega \times (0,T)} \quad\text{(by \eqref{eq:Stau-dual})} \\
    &= \dual{ y(T)-Y(T-), (S_{\tau}(U-u))(T-) }_{\Omega}
    \quad\text{(by \eqref{eq:delta_T})} \\
    &= \dual{ (Su)(T)-(S_\tau U)(T-), (S_{\tau}(U-u))(T-) }_{\Omega}
    \quad \text{(by (\ref{eq:optim-y}) and (\ref{eq:optim-Y}))} \\
    &= \dual{
      (Su)(T) - (S_{\tau}u)(T-),
      (S_{\tau}(U-u))(T-)
    }_\Omega -
    \nm{(S_{\tau}(u-U))(T-)}_{L^2(\Omega)}^2 \\
    & \leqslant
    \frac12 \nm{(Su)(T) - (S_{\tau}u)(T-)}_{L^2(\Omega)}^2 -
    \frac12 \nm{(S_{\tau}(u-U))(T-)}_{L^2(\Omega)}^2 \\
    & \leqslant
    \nm{(Su)(T) - (S_{\tau}u)(T-)}_{L^2(\Omega)}^2 -
    \frac12 \nm{(Su)(T) - (S_\tau U)(T-)}_{L^2(\Omega)}^2.
  \end{align*}
  Combining \eqref{eq:I1+I2} and the above estimates of $ \mathbb I_1 $ and $
  \mathbb I_2 $ gives \eqref{eq:ava}.

  Then, by the symmetric version of \eqref{eq:S-Stau-delta} we obtain
  \begin{align*}
    & \nm{
      S^*((y(T)-y_d)\delta_T) -
      S_{\tau}^*((y(T)-y_d)\widehat\delta_T)
    }_{L^1(0,T;L^2(\Omega))} \\
    \leqslant{} &
    C_{\omega_0,\mathcal M_0} \tau^\alpha
    \left(
      \frac1\alpha + \frac{1-J^{\alpha-1}}{1-\alpha}
    \right) \nm{y(T) - y_d}_{L^2(\Omega)},
  \end{align*}
  so that \eqref{eq:yT} implies
  \begin{align}
    & \nm{
      S^*((y(T)-y_d)\delta_T) - S_{\tau}^*((y(T)-y_d)\widehat\delta_T)
    }_{L^1(0,T;L^2(\Omega))} \notag \\
    \leqslant{} &
    C_{u_*,u^*,\omega_0,\mathcal M_0,T,\Omega} \tau^\alpha
    \left(
      \frac1\alpha + \frac{1-J^{\alpha-1}}{1-\alpha}
    \right) (1/\alpha +  \nm{y_d}_{L^2(\Omega)}) \notag \\
    \leqslant{} &
    C_{y_d,u_*,u^*,\omega_0,\mathcal M_0,T,\Omega}
    \left(
      \frac1{\alpha^2} + \frac{1-J^{\alpha-1}}{1-\alpha}
    \right) \tau^\alpha.
    \label{eq:eve-1}
  \end{align}
  We obtain from \eqref{eq:S-Stau-g-2} that
  \begin{equation}
    \label{eq:eve-2}
    \nm{(Su)(T) - (S_\tau u)(T-)}_{L^2(\Omega)}
    \leqslant C_{u_*,u^*,\omega_0,\mathcal M_0,\Omega}
    \left(
      \frac1\alpha + \frac{1-J^{\alpha-1}}{1-\alpha}
    \right) \tau^\alpha.
  \end{equation}
  Finally, combining \eqref{eq:ava}, \eqref{eq:eve-1} with \eqref{eq:eve-2} gives
  \begin{align*}
    & \nm{(Su)(T) - (S_\tau U)(T-)}_{L^2(\Omega)} +
    \sqrt\nu \nm{u-U}_{L^2(0,T;L^2(\Omega))} \\
    \leqslant{} &
    C_{y_d,u_*,u^*,\omega_0,\mathcal M_0,T,\Omega}
    \left(
      \frac1\alpha + \left(\frac{1-J^{\alpha-1}}{1-\alpha}\right)^{1/2} +
      \frac{1-J^{\alpha-1}}{1-\alpha} \tau^{\alpha/2}
    \right) \tau^{\alpha/2},
  \end{align*}
  which, together with (\ref{eq:optim-y}) and (\ref{eq:optim-Y}), implies
  \eqref{eq:conv}. This completes the proof.
\end{proof}
\begin{remark}
  Let $ y_T $ be defined in Remark \ref{rem:nu=0}. Combining \eqref{eq:731} and \eqref{eq:conv}
  yields
  \begin{small}
  \begin{align*}
    & \nm{y_T - Y({T-})}_{L^2(\Omega)} \\
    \leqslant{} &
    C_{y_d,u_*,u^*,\omega_0,\mathcal M_0,T,\Omega} \left(
      \sqrt\nu +
      \left(
        \frac1\alpha + \left(\frac{1-J^{\alpha-1}}{1-\alpha}\right)^{1/2} +
        \frac{1-J^{\alpha-1}}{1-\alpha} \tau^{\alpha/2}
      \right) \tau^{\alpha/2}
    \right).
  \end{align*}
  \end{small}
\end{remark}

\section{Numerical experiments} 
\label{sec:numer} 
This section performs three numerical experiments in one dimensional space to
verify the theoretical results, in the following settings: $ T= 0.1 $; $ \Omega
= (0,1) $; $ \mathcal A = \Delta $; the space is discretized by a standard
Galerkin finite element method, with the space
\[ 
  \mathcal V_h := \left\{
    v_h \in H_0^1(0,1): v_h \text{ is linear on }
    \big( (m\!-\!1)/2^{10}, m/2^{10} \big) \,\,
    \text{for all $ 1 \leqslant m \leqslant 2^{10} $}
  \right\}.
\]

\noindent{\it Experiment 1.} The purpose of this experiment is to verify
\eqref{eq:S-Stau-vdelta} and \eqref{eq:S-Stau-vdelta-l1}. We set $ v(x) := x^{-0.49} $, $ 0 <
x < 1 $, and let
\begin{align*}
  e_T &:= \nm{
    S_{\tau,J}(v\delta_0) -
    S_{\tau^*,J^*}(v\delta_0)
  }_{L^2(\Omega)}, \\
  e_{l1} &:= \sum_{j=1}^{J^*} T/J^*
  \nm{
    S_{\tau, \left\lceil j J/J^* \right\rceil}(v\delta_0) -
    S_{\tau^*, j} (v\delta_0)
  }_{L^2(\Omega)},
\end{align*}
where $ J^* := 2^{15} $, $ \tau^* = T/J^* $, and $ \lceil \cdot \rceil $ is the
ceiling function. Table \ref{tab:Svdelta-alpto1} shows that $
e_T/(\tau^{\alpha-1}J^{\alpha-2}) $ will not blow up as $ \alpha \to {1-} $,
which agrees well with \eqref{eq:S-Stau-vdelta}. The numerical results in Figure
\ref{fig:ex1} illustrate that $ e_T $ is close to $ O(\tau) $, and this also agrees
well with \eqref{eq:S-Stau-vdelta}. The numerical results in Figure
\ref{fig:ex1_l1} demonstrate that $ e_{l1} $ is close to $ O(\tau^\alpha) $,
and this is in good agreement with \eqref{eq:S-Stau-vdelta-l1}.

\begin{table}
  \caption{$ e_T / (\tau^{\alpha-1} J^{\alpha-2}) $ of Experiment 1.}
  \label{tab:Svdelta-alpto1}
\begin{center}
  \begin{tabular}{cccc}
    \toprule
    $\alpha$ & $J=2^7$ & $J=2^8$ & $J=2^9$ \\
    $0.90$   & 5.35e-3 & 5.19e-3 & 5.03e-3 \\
    $0.95$   & 5.13e-3 & 4.90e-3 & 4.74e-3 \\
    $0.99$   & 4.37e-3 & 4.10e-3 & 3.94e-3 \\
    $0.999$  & 4.10e-3 & 3.82e-3 & 3.66e-3 \\
    \bottomrule
  \end{tabular}
\end{center}
\end{table}

\begin{figure}[H]
  \begin{minipage}[t]{0.5\linewidth}
    \includegraphics[scale=0.5]{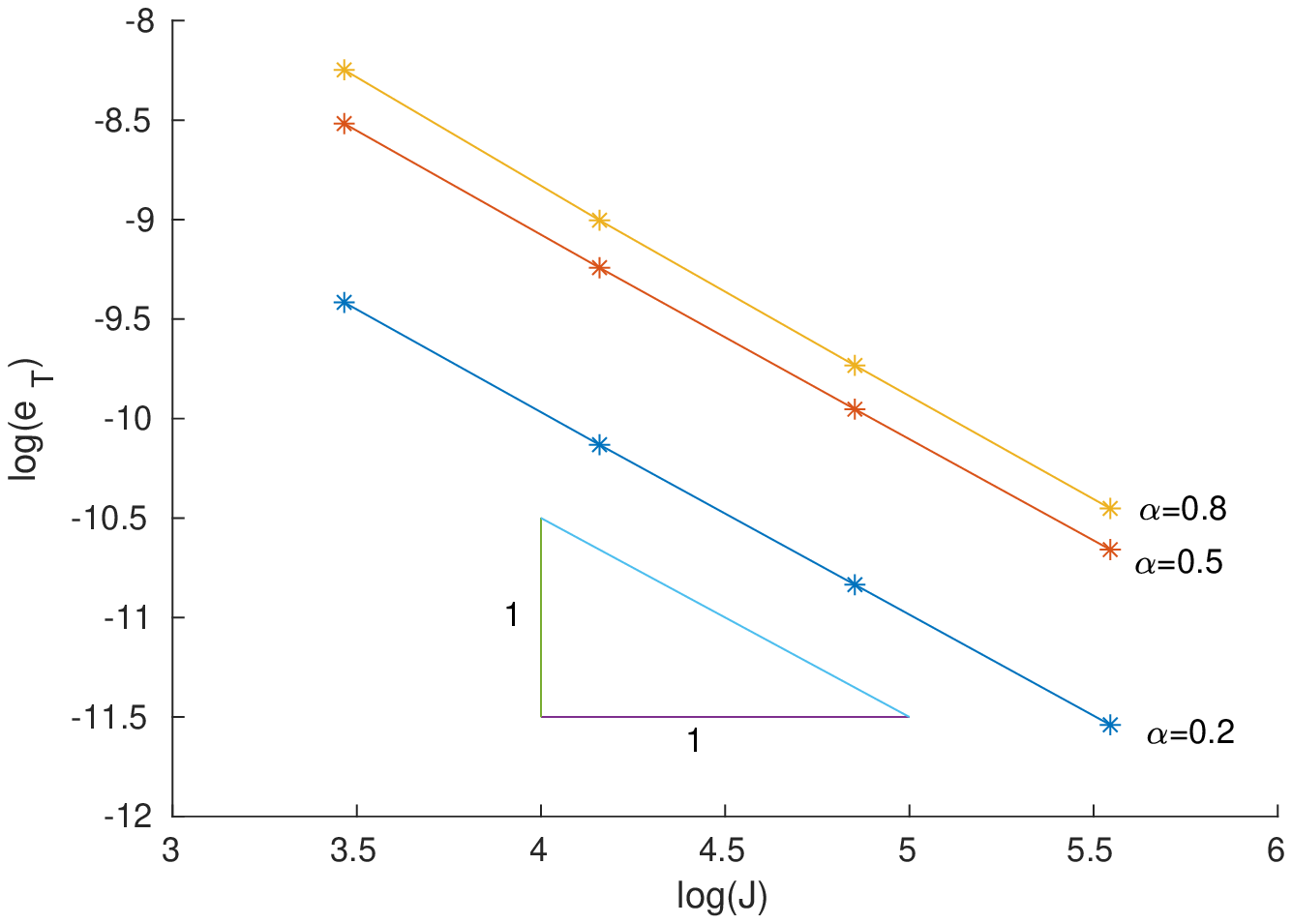}
    \caption{$e_T$ of numerical Example 1.}
    \label{fig:ex1}
  \end{minipage}%
  \begin{minipage}[t]{0.5\linewidth}
    \includegraphics[scale=0.5]{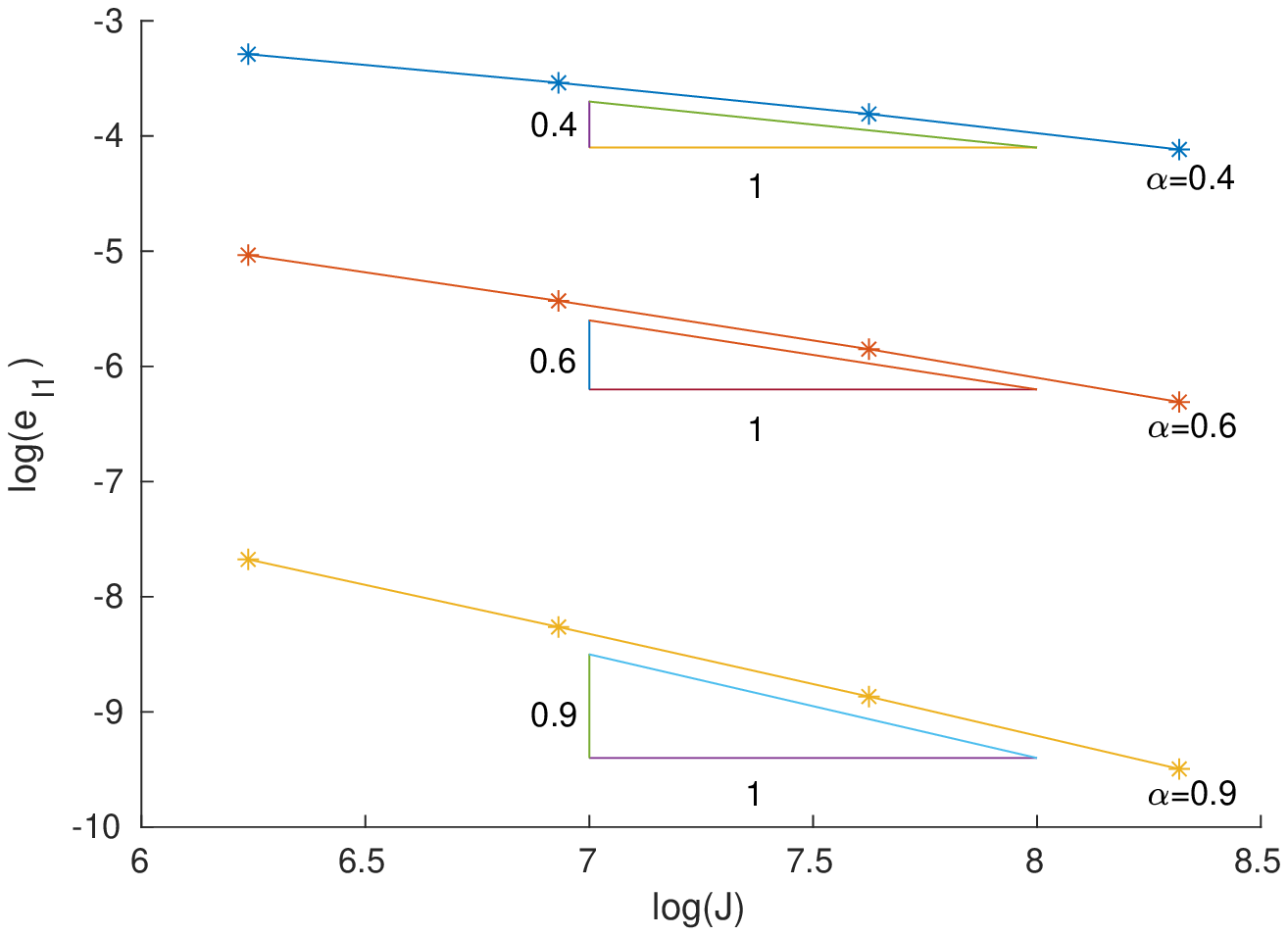}
    \caption{$e_{l1}$ of numerical Example 1.}
    \label{fig:ex1_l1}
  \end{minipage}
\end{figure}

\noindent{\it Experiment 2.} The purpose of this experiment is to verify
\eqref{eq:S-Stau-g}. To this end, we set
\[
  f(t,x) := x^{-0.49}, \quad 0 < t < T, \quad 0 < x < 1,
\]
and define
\[
  e_\infty := \max_{1 \leqslant j \leqslant J}
  \nm{
    S_{\tau,j}f - S_{\tau^*,\lceil jJ^*/J \rceil} f
  }_{L^2(\Omega)},
\]
where $ J^* = 2^{15} $ and $ \tau^* = T/J^* $. The numerical results in
Figure \ref{fig:ex2} shows that $ e_\infty $ is close to $ O(\tau^\alpha) $,
which is in good agreement with \eqref{eq:S-Stau-g}.
\begin{figure}[H]
    \centering
    \includegraphics[scale=.5]{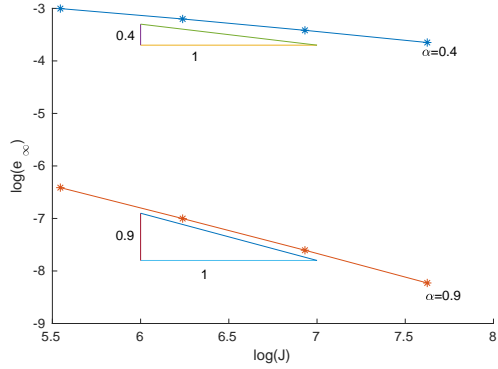}
    \caption{$e_\infty$ of numerical Example 2.}
    \label{fig:ex2}
\end{figure}


\medskip\noindent{\it Experiment 3.} The purpose of this experiment is to verify
Theorem \ref{thm:conv}, in the following settings: $ a=0 $; $ b=10 $; $ \nu = 10 $; $
y_d :\equiv 1 $. Discretization \eqref{eq:numer_opti} is solved by the following
iteration algorithm (cf.~\cite[Algorithm 3.2]{Hinze2009}):
\begin{small}
\begin{align*}
  & U_0 := 0, \\
  & U_j = f(S_\tau^*(((S_\tau U_{j-1})(T-)-y_d)\widehat\delta_T)),
  \quad 1 \leqslant j \leqslant k,
\end{align*}
\end{small}
where $ f $ is defined by \eqref{eq:f} and $ k $ is large enough such that
\[
  \nm{U_k - U_{k-1}}_{L^\infty(0,T;L^\infty(\Omega))} < 10^{-12}.
\]
The ``Error" in  Figure \ref{fig:ex3} means
\[
  \nm{Y(T-)-Y^*(T-)}_{L^2(\Omega)} +
  \nm{U - U^*}_{L^2(0,T;L^2(\Omega))},
\]
where $ U^* $ and $ Y^* $ are the numerical solutions with $ J =2^{15} $. The
theoretical convergence rate $ O(\tau^{\alpha/2}) $ is observed in Table
\ref{fig:ex3}.

\begin{figure}[H]
    \centering
    \includegraphics[scale=.5]{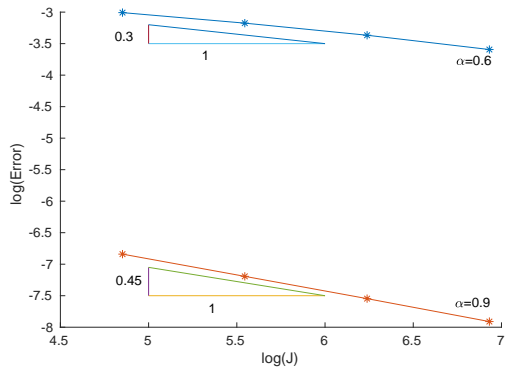}
    \caption{Numerical results of numerical Example 3.}
    \label{fig:ex3}
\end{figure}




\bibliographystyle{plain}

\section{Appendix:   Properties of the fractional calculus operators}
Assume that $ -\infty < a < b < \infty $. Define
\begin{align*}
  {}_0H^1(a,b;L^2(\Omega)) := \{v \in H^1(a,b;L^2(\Omega)): v(a) = 0 \}, \\
  {}^0H^1(a,b;L^2(\Omega)) := \{v \in H^1(a,b;L^2(\Omega)): v(b) = 0 \},
\end{align*}
where $ H^1(a,b;L^2(\Omega)) $ is a standard vector valued Sobolev space. For
each $ 0 < \beta < 1 $, define
\begin{align*}
  {}_0H^\beta(a,b;L^2(\Omega)) &:=
  (L^2(a,b;L^2(\Omega)), {}_0H^1(a,b;L^2(\Omega)))_{\beta,2}, \\
  {}^0H^\beta(a,b;L^2(\Omega)) &:=
  (L^2(a,b;L^2(\Omega)), {}^0H^1(a,b;L^2(\Omega)))_{\beta,2},
\end{align*}
where $ (\cdot,\cdot)_{\beta,2} $ means the interpolation space defined by the $
K $-method (cf.~\cite{Lunardi2018}). In addition, we use $
{}_0H^{-\beta}(a,b;L^2(\Omega)) $ and $ {}^0H^{-\beta}(a,b;L^2(\Omega)) $ to
denote the dual spaces of $ {}^0H^\beta(a,b;L^2(\Omega)) $ and $
{}_0H^\beta(a,b;L^2(\Omega)) $, respectively.

Assume that $ 0 < \gamma < 1/2 $. For any $ v \in {}_0H^\gamma(a,b;L^2(\Omega))
$,
\begin{equation}
  \label{eq:sobolev-frac}
  C_0 \nm{v}_{{}_0H^\gamma(a,b;L^2(\Omega))} \leqslant
  \nm{\D_{a+}^\gamma v}_{L^2(a,b;L^2(\Omega))} \leqslant
  C_1 \nm{v}_{{}_0H^\gamma(a,b;L^2(\Omega))},
\end{equation}
where $ C_0 $ and $ C_1 $ are two positive constants depending only on $ \gamma
$. For any $ v \in {}_0H^\gamma(a,b;L^2(\Omega)) $ and $ w \in
{}^0H^\gamma(a,b;L^2(\Omega)) $,
\begin{small}
\begin{equation}
  \label{eq:dual}
  \dual{\D_{a+}^{2\gamma} v, w}_{{}^0H^\gamma(a,b;L^2(\Omega))} =
  \dual{\D_{a+}^\gamma v, \D_{b-}^\gamma w}_{\Omega \times (0,T)} =
  \overline{
    \dual{\D_{b-}^{2\gamma} w, v}_{{}_0H^\gamma(a,b;L^2(\Omega))}
  }.
\end{equation}
\end{small}
For any $ v \in {}_0H^\gamma(a,b;L^2(\Omega)) $,
\begin{small}
\begin{equation}
  \label{eq:coer}
  \cos(\gamma\pi) \nm{\D_{a+}^\gamma v}_{L^2(a,b;L^2(\Omega))}^2
  \leqslant \Dual{
    \D_{a+}^\gamma v, \D_{b-}^\gamma v
  }_{\Omega \times (0,T)} \leqslant
  \sec(\gamma\pi) \nm{\D_{a+}^\gamma v}_{L^2(a,b;L^2(\Omega))}^2.
\end{equation}
\end{small}
For the proof of \eqref{eq:sobolev-frac} we refer the reader to \cite[Section
3]{Luo2019}, and, for the proofs of \eqref{eq:dual} and \eqref{eq:coer}, we refer the reader
to \cite{Ervin2006}.

\end{document}